\documentclass[12pt]{amsart}
\usepackage{amsmath}
\usepackage{amssymb}
\usepackage{amstext}
\usepackage{amscd}
\usepackage[matrix,arrow,ps]{xy}

\newtheorem{teo}{Theorem}[section]
\newtheorem{prop}[teo]{Proposition}
\newtheorem{lem}[teo]{Lemma}
\newtheorem{cor}[teo]{Corollary}
\newtheorem{conj}[teo]{Conjecture}
\newtheorem{exe}[teo]{Example}
\newtheorem{defini}[teo]{Definition}
\newtheorem{ques}[teo]{Question}
\newtheorem{rem}[teo]{Remark}

\newcommand{\PGL}{{\rm PGL}}

\newcommand{\GL}{{\rm GL}}

\newcommand{\Sh}{{\rm Sh}}

\newcommand{\ad}{{\rm ad}}

\newcommand{\CC}{{\mathbb C}}
\newcommand{\RR}{{\mathbb R}}
\newcommand{\ZZ}{{\mathbb Z}}
\newcommand{\QQ}{{\mathbb Q}}
\newcommand{\NN}{{\mathbb N}}

\newcommand{\HH}{{\mathbb H}}
\newcommand{\PP}{{\mathbb P}}

\newcommand{\SSS}{{\mathbb S}}
\newcommand{\AAA}{{\mathbb A}}

\newcommand{\lto}{\longrightarrow}

\def\Fk{\mathfrak{k}}
\def\Fp{\mathfrak{p}}

\def\Fg{\mathfrak{g}}

\def\Fm{\mathfrak{m}}

\newcommand{\cF}{\mathcal{F}}
\newcommand{\cA}{{\mathcal A}}

\newcommand{\cL}{{\mathcal L}}

\newcommand{\cX}{{\mathcal X}}

\newcommand{\ol}{\overline}

\newcommand{\vol}{{\mbox{vol}}}

\newcommand{\wt}{\widetilde}

\title{Hyperbolic Ax-Lindemann theorem in the cocompact case.
\footnote{To appear in Duke Mathematical Journal.}}

\author{Emmanuel Ullmo and Andrei Yafaev}

\date{\today}
\begin{document}

\maketitle

\begin{abstract}
We prove an analogue of the classical Ax-Lindemann theorem in the
context of compact Shimura varieties. 
Our work is motivated by J. Pila's strategy for proving the Andr\'e-Oort conjecture unconditionally.
\end{abstract}

\section{Introduction.}

Pila and Zannier \cite{PZ}  recently gave a new proof of the Manin-Mumford conjecture for abelian varieties.
The ideas involved in this proof have been subsequently used by Pila \cite{Pil} to prove
the Andr\'e-Oort conjecture unconditionally for products of modular curves.
From this work emerged a very nice  and promising strategy for proving the Andr\'e-Oort conjecture
for general Shimura varieties.
One important step in this strategy is a hyperbolic analogue of
a theorem of Ax which is a functional version of a classical result of Lindemann. 
The aim of this paper is to prove
this statement for compact Shimura varieties.

Let us first recall the context of the Andr\'e-Oort conjecture.
 For notations concerning
Shimura varieties and their special subvarieties, we refer to \cite{EY} and references contained therein.

Let $(G,X)$ be a Shimura datum and $X^+$ a connected component of $X$.
We let $K$ be a compact open subgroup of $G(\AAA_f)$
 and $\Gamma := G(\QQ)_+\cap K$
where $G(\QQ)_+$ denotes the stabiliser in $G(\QQ)$ of $X^+$.
Then $S:=\Gamma\backslash X^+$ is a connected component of 
$$
\Sh_K(G,X):=G(\QQ)_{+}\backslash X^{+}\times G(\AAA_{f})/K.
$$

\begin{conj}[Andr\'e-Oort]
Let $Z$ be an irreducible subvariety of $\Sh_K(G,X)$ containing a Zariski-dense set
of special points. Then $Z$ is special.
\end{conj}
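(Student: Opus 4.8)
The statement is the full Andr\'e--Oort conjecture, of which the hyperbolic Ax--Lindemann theorem proved in this paper (in the cocompact case) is one ingredient; the plan is therefore to describe the Pila--Zannier strategy and to indicate where that ingredient enters. First I would reduce to a connected component: replacing $(G,X)$ by the generic Mumford--Tate datum of $Z$ and $\Sh_K(G,X)$ by a component $S = \Gamma \backslash X^+$, it suffices to show that a Hodge-generic irreducible $Z \subseteq S$ carrying a Zariski-dense set of special points is either a point or all of $S$. Let $\pi : X^+ \to S$ be the uniformization and fix a fundamental set $\cF \subset X^+$ for $\Gamma$; in the cocompact case $\cF$ may be taken semialgebraic, which makes $\pi|_{\cF}$ definable in the o-minimal structure $\RR_{\mathrm{an},\exp}$ by Peterzil--Starchenko-type arguments. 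The two arithmetic inputs I would invoke are: (a) the preimage in $\cF$ of a special point attached to a CM field has height bounded polynomially in the absolute discriminant of that field; and (b) the Galois orbit of such a special point has cardinality bounded \emph{below} by a positive power of the same discriminant.

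Granting (a) and (b), the core of the argument runs as follows. The set $\wt Z := \pi^{-1}(Z) \cap \cF$ is definable in $\RR_{\mathrm{an},\exp}$. A single Galois orbit of a special point $s \in Z$ consists of special points of $Z$ lying over $\cF$-points of comparable height, so (a) and (b) together force $\wt Z$ to contain $\gg H^{\delta}$ algebraic points of height $\le H$ for some $\delta > 0$ and arbitrarily large $H$. By the Pila--Wilkie counting theorem these points cannot all lie in the transcendental part of $\wt Z$, so $\wt Z$ contains a connected positive-dimensional semialgebraic subset; saturating under $\Gamma$ and taking Zariski closures produces a positive-dimensional irreducible algebraic subvariety $V \subseteq X^+$ with $\pi(V) \subseteq Z$, and a maximality argument lets us assume $V$ passes through the preimage of a special point. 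At this point the hyperbolic Ax--Lindemann theorem of this paper applies and shows that the Zariski closure of $\pi(V)$ in $S$ --- a positive-dimensional subvariety of $Z$ --- is weakly special.

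It remains to upgrade ``$Z$ contains, through a Zariski-dense set of special points, positive-dimensional weakly special subvarieties'' to ``$Z = S$''. Since a weakly special subvariety passing through a special point is special, $Z$ contains infinitely many special subvarieties of some fixed dimension $d \geq 1$; applying Galois and using the lower bound (b) one sees that for $d < \dim Z$ their union is Zariski dense in $Z$, which by a structure result for the family of special subvarieties of dimension $d$ together with Hodge-genericity is impossible unless $d = \dim Z = \dim S$. One then concludes by induction on $\dim Z$, the base case $\dim Z = 0$ being trivial. The main obstacle is plainly the pair of arithmetic inputs: the height estimate (a) and, far more seriously, the Galois lower bound (b), which for a general Shimura variety is only known under GRH and whose unconditional proof requires genuinely new arithmetic (averaged Colmez-type formulas); the geometric heart of the method, the hyperbolic Ax--Lindemann statement, is exactly what the present paper supplies in the cocompact case.
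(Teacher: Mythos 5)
The statement you were asked to prove is labeled a \emph{Conjecture} in the paper and is not proved there; the paper's contribution is precisely one geometric ingredient of the Pila--Zannier strategy (the hyperbolic Ax--Lindemann theorem in the cocompact case), not the conjecture itself. You correctly recognized this, and your outline of the strategy is essentially the standard one: reduce to a Hodge-generic subvariety of a connected component, use definability of the restricted uniformization together with the height upper bound and Galois lower bound for preimages of special points, apply Pila--Wilkie to produce positive-dimensional semialgebraic pieces in $\pi^{-1}(Z)\cap\cF$, invoke Ax--Lindemann to identify these as (weakly) special, and close by induction. You also correctly flagged that the Galois lower bound (your input (b)) was, at the time of writing, only known under GRH for general Shimura varieties, so the argument cannot be completed unconditionally from what is in this paper.

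Two small technical caveats worth noting against the paper's actual setup. First, the paper's fundamental \emph{set} $\Omega$ is semialgebraic, but the connected fundamental \emph{domain} $\cF\subset\Omega$ is only definable in $\RR_{\mathrm{an}}$ (Proposition \ref{fundomain}); this suffices for definability of $\pi|_{\cF}$, but ``semialgebraic fundamental domain'' is an overstatement. Second, your final induction step (``a structure result for the family of special subvarieties of dimension $d$ together with Hodge-genericity'') elides the real difficulty of the closing argument: one must show that the union of the positive-dimensional special subvarieties produced is itself contained in a proper special subvariety unless $Z=S$, and this requires again the Galois lower bound for families of special subvarieties, not just for points. These points do not change your correct overall assessment that the statement is an open conjecture in the generality stated, and that the paper supplies exactly the functional-transcendence half of the method in the cocompact case.
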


This conjecture has recently been proved  under the assumption of the Generalised Riemann Hypothesis for CM fields
(see \cite{UY0} and \cite{KY}). Part of the strategy consists in establishing a geometric 
characterisation of special subvarieties
of Shimura varieties. This criterion says roughly that subvarieties contained in their images by certain Hecke correspondences are special.

The Ax-Lindemann theorem is a functional transcendence result  for the exponential
map $\exp :\CC\rightarrow \CC^{*}$. The following geometric version is due to Ax \cite{Ax}.
It is the special case  of the  theorem of Ax  which correspond to the Lindemann (or Lindemann-Weierstrass)
part of the Schanuel conjecture.

\begin{teo}
Let $n$ be an integer and $V$ be an algebraic subvariety of $(\CC^{*})^{n}$. 
Let $\pi=(\exp,\dots,\exp):\CC^{n}\rightarrow (\CC^{*})^{n}$ be the uniformising map.
A maximal complex algebraic subvariety $W\subset \pi^{-1}(V)$ is a translate
of a rational linear subspace.
\end{teo}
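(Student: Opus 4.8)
The plan is to prove the statement by first reducing to a purely combinatorial/geometric assertion about the structure of maximal algebraic subvarieties of $\pi^{-1}(V)$, and then exploiting the group-theoretic nature of $\ker\pi = (2\pi\iii\ZZ)^n$ together with the algebraicity of $V$. First I would observe that it suffices to treat the case where $W$ is not contained in any proper affine subspace parallel to a rational hyperplane, so that its $\ZZ$-linear span (in the directions along which $W$ is ``spread out'') is the relevant object; the goal becomes showing that the smallest complex affine subspace $L$ containing $W$ is defined over $\QQ$ and that $W = L$.

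The key mechanism is the following. Since $W \subset \pi^{-1}(V)$ and $\pi$ is invariant under translation by $2\pi\iii\ZZ^n$, for every $\gamma \in \ker\pi$ the variety $W$ and $W + \gamma$ both lie in $\pi^{-1}(V)$; I would consider the identity component $\Theta$ of the Zariski closure of $\{\gamma \in \ker\pi : W + \gamma \subset \pi^{-1}(V)\}$, or more precisely the stabiliser-type subgroup measuring the directions in which $W$ can be translated while staying inside $\pi^{-1}(V)$. The crucial point is a monodromy/o-minimality-free argument: because $V$ is algebraic of finite degree, the set of lattice translates of $W$ inside $\pi^{-1}(V)$ is ``large'' in a quantitative sense, and this forces the direction space of $W$ to contain a large number of lattice points, hence (being a complex subspace containing many $\ZZ^n$-points in general position) to be the complexification of a rational subspace. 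Concretely, I would argue that $W$ must be invariant under translation by its own underlying real direction space intersected with $\ZZ^n \otimes \RR$, and maximality of $W$ then upgrades this to $W$ being a full coset of a complex subspace $L$; the rationality of $L$ follows since $L$ is generated by differences of lattice points lying in $\overline{W} \cap \pi^{-1}(V)$.

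In more detail, the steps are: (1) normalise so that $0 \in W$ (translating $V$ by an element of $(\CC^*)^n$, which does not affect the statement) and let $L$ be the Zariski closure of the affine span of $W$; (2) show $W = L$ by noting that if $W \subsetneq L$ then one produces, using the algebraicity of $V$ and a counting argument on $\ker\pi$-translates, an algebraic subvariety strictly containing $W$ and still inside $\pi^{-1}(V)$, contradicting maximality --- this is where the interaction between the discreteness of $\ker\pi$ and the finite complexity of $V$ is essential; (3) conclude that $L$, being spanned by a subset of $\ker\pi$ (after translating back), is rational. I expect step (2) to be the main obstacle: one needs a genuinely non-trivial input to rule out the possibility that a maximal $W$ is a proper, non-linear or irrational-linear subvariety, and in Ax's original treatment this is handled via differential algebra (the Ax--Schanuel formalism and a dimension count on the differential field generated by the coordinates), whereas a more geometric route would require an argument that the ``leaves'' of $\pi^{-1}(V)$ are forced to be affine by a monodromy argument around the algebraic variety $V$. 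The delicate part is controlling how the lattice $\ker\pi$ meets the direction space of $W$ densely enough to force rationality without circular use of the very transcendence statement being proved.
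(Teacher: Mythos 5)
The paper does not actually prove this statement: it cites it to Ax \cite{Ax}, whose proof is differential-algebraic, and the paper's own work concerns the hyperbolic analogue. So there is no in-paper proof to compare against; however, the paper's strategy for the hyperbolic case (Sections 2, 5, 6) is precisely the template your sketch gestures at, and the same diagnosis applies either way.

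There is a genuine gap, and you flag it yourself. The entire weight of your argument rests on step (2), and it is announced rather than proved. The claim that ``because $V$ is algebraic of finite degree, the set of lattice translates of $W$ inside $\pi^{-1}(V)$ is large in a quantitative sense'' is exactly the non-trivial input, and no mechanism or estimate for it is supplied. To make your plan work along the lines of this paper you would need: (i) definability (in $\RR_{an,exp}$, or even $\RR_{an}$) of $\pi$ restricted to the standard fundamental strip $\cF$, which is trivial here; (ii) for an algebraic curve $C\subset W$, a lower bound of the form $\#\{\gamma\in\ZZ^n : \|\gamma\|\le T,\ (C+2\pi\iii\gamma)\cap\cF\neq\emptyset\}\gg T^{c}$, the flat analogue of Theorem \ref{t1.6}; (iii) the Pila--Wilkie theorem applied to the definable set $\Sigma(W)=\{g\in\CC^n : \dim((W+g)\cap\pi^{-1}(V)\cap\cF)=\dim W\}$ to extract a positive-dimensional semi-algebraic block of good translates; (iv) maximality of $W$ to place that block inside a coset of the stabiliser $\Theta_W$, so that $\Theta_W$ contains a positive-dimensional $\QQ$-subgroup; and (v) a quotient-and-iterate argument to finish. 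Your steps (2)--(3) allude to fragments of this chain (the ``counting argument,'' the assertion that $L$ is spanned by lattice vectors) but none of it is executed; your step (3) in particular presupposes that the stabiliser lattice has full rank in $L$, which is the content of the missing argument, not a consequence of it. The alternative you mention --- importing Ax's differential-algebraic proof --- is likewise not carried out. As written this is a correct identification of the goal and of where the difficulty lies, plus a frank admission that the difficulty has not been overcome, which is not yet a proof.
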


The Hyperbolic Ax-Lindemann conjecture is an analogue, in the context of Shimura varieties,
for the uniformising map $\pi: X^{+}\longrightarrow S$. Via the Harish-Chandra embedding,
 $X^{+}$  has a canonical realisation as a bounded symmetric domain in $\CC^{n}$.
Roughly speaking, an algebraic subvariety of $X^{+}$ is defined as the intersection of an algebraic
subvariety of $\CC^{n}$ with $X^{+}$. We will  give  a precise definition of an irreducible algebraic subset
of $X^{+}$ (see section \ref{reduction}). A maximal algebraic subvariety of an analytic subset $Z$ of $X^{+}$ is then
an irreducible  algebraic subvariety contained in $Z$ and maximal among irreducible
algebraic varieties contained in $Z$.
The main result of this paper is the following theorem.

\begin{teo} \label{main_thm}
We assume that $S$ is compact.
Let $\pi\colon X^+ \lto S$ be the uniformising map and let $V$
be an algebraic subvariety of $S$.
Maximal algebraic subvarieties of $\pi^{-1}V$ are precisely the components of the preimages of weakly special 
subvarieties contained in $V$.
\end{teo}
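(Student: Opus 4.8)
The plan is to prove the two inclusions separately, the easy one being that components of preimages of weakly special subvarieties $V' \subseteq V$ are algebraic subvarieties of $\pi^{-1}V$, and maximal among them. Algebraicity of such components follows from the structure theory of weakly special subvarieties: if $V' = \pi(\tilde{X}')$ for a sub-Shimura datum coming from a decomposition $X^+ \simeq X_1^+ \times X_2^+$, then the component of $\pi^{-1}V'$ is of the form $X_1^+ \times \{x_2\}$, which is manifestly algebraic in the bounded realisation. Maximality requires showing that no strictly larger irreducible algebraic subvariety $W$ of $X^+$ still lies inside $\pi^{-1}V$; this is where the hard direction feeds back in.

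\medskip

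For the substantial direction — a maximal algebraic $W \subseteq \pi^{-1}V$ is a component of the preimage of a weakly special subvariety — I would follow Pila's strategy via o-minimality and point counting. First, choose a semialgebraic fundamental set $\mathcal{F}$ for the action of $\Gamma$ on $X^+$ (here compactness of $S$ is crucial: one gets a genuinely relatively compact real-semialgebraic fundamental domain, avoiding the cusps and the more delicate definability input needed in the non-compact case), so that $\pi|_{\mathcal{F}}$ is definable in the o-minimal structure $\RR_{\mathrm{an}}$. Then consider, for $\gamma \in \Gamma$, the intersections $W \cap \gamma \mathcal{F}$ and ask for which $\gamma$ the translate $\gamma^{-1}W$ (or rather a suitable piece) again lies in $\pi^{-1}V$; the set of such $\gamma$ is definable by the Pila-Wilkie machinery applied to the definable family parametrising algebraic subvarieties of $X^+$ of bounded degree contained in $\pi^{-1}V$. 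One shows that this set of $\gamma$ has many rational points of bounded height, hence by Pila-Wilkie contains a positive-dimensional semialgebraic block, which after taking the Zariski closure produces an algebraic subgroup $H$ of $G$ whose orbit stabilises $W$.

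\medskip

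The monodromy/group-theoretic step then identifies this stabiliser. Let $N$ be the connected algebraic subgroup of $G_\RR$ generated by all the $\gamma$-conjugates realised above; the previous step shows $N$ is defined over $\QQ$ (its $\QQ$-points are Zariski-dense) and that $N(\RR)^+ \cdot W \subseteq \pi^{-1}V$, hence by maximality $N(\RR)^+ \cdot W = W$, i.e. $W$ is a union of $N(\RR)^+$-orbits. Using the classification of such orbits and the fact that $W$ is algebraic and irreducible, one shows $W$ is exactly an $N(\RR)^+$-orbit of the form giving a weakly special subvariety: one decomposes $N$ (via its adjoint group and the Mumford-Tate group of a generic point) into a product, reads off a sub-Shimura datum $(H, X_H)$ with $X_H^+$ a factor, and concludes $\pi(W)$ is weakly special and contained in $V$ (the latter because $W \subseteq \pi^{-1}V$ and $\pi(W)$ is a subvariety — here one uses that $\pi(W)$ is algebraic, which follows since $\pi(W) \subseteq V$ is constructible by Chevalley applied to the quasi-finite $\pi|_{\mathcal{F}}$, or via the definability of $\pi$). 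Finally $W$ is a component of $\pi^{-1}(\pi(W))$ again by maximality.

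\medskip

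\textbf{Main obstacle.} The delicate point is the passage from "many rational points in the definable set of good $\gamma$'s" to "a real algebraic group stabilising $W$". This requires: (i) arranging a definable family whose fibres are exactly the degree-bounded algebraic subvarieties of $\pi^{-1}V$, which needs the Harish-Chandra realisation and a definability statement for $\pi$ restricted to $\mathcal{F}$; (ii) producing enough height-bounded rational points, which comes from a counting estimate in $\Gamma$ governed by the volume of balls in $X^+$ and the fact that if $W$ is \emph{not} weakly special then $\pi(W)$ has positive-dimensional Zariski closure forcing infinitely many relevant $\gamma$; and (iii) upgrading the semialgebraic block from Pila-Wilkie to an honest group, which is a Zariski-closure and monodromy argument. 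Getting the bookkeeping of step (ii) right — so that non-weak-specialness genuinely forces polynomially many rational points — is the technical heart, and compactness of $S$ is what makes the definability and counting clean.
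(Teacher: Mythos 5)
Your overall plan follows the same Pila--Wilkie scaffolding as the paper (definable fundamental domain, counting $\gamma$'s, Pila--Wilkie, extracting an algebraic subgroup from a block, then reading off the weakly special structure), but two of the load-bearing steps are either misidentified or glossed over, and these are precisely where the paper's actual content lies.

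\textbf{The counting input.} In step (ii) you say that ``if $W$ is not weakly special then $\pi(W)$ has positive-dimensional Zariski closure forcing infinitely many relevant $\gamma$'', together with a volume-of-balls heuristic. This is not the mechanism, and as stated it would not give the polynomial-in-height lower bound that Pila--Wilkie needs. The paper's key technical result (Theorem \ref{t1.6}) is a hyperbolic-geometry statement that is agnostic to weak-specialness: any affine algebraic curve $C$ in $\CC^N$ meeting the bounded realisation $X^+$ must escape to the boundary $\partial X^+$; near the boundary the Bergman metric blows up like $-\log d_e(\cdot,\partial X^+)$; and since $(\Gamma, l)$ and $(X^+, d_{X,h})$ are quasi-isometric (because $\Gamma$ is cocompact), the curve passes through at least $e^{cN}$ translates $\gamma\cF$ with $l(\gamma)\leq N$. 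Translating word length into matrix height gives polynomially many $\gamma$'s of bounded height in the definable set $\Sigma(Y)$, which is the input to Pila--Wilkie. Your formulation drops the essential point (algebraic curves must reach $\partial X^+$) and replaces it by a non-specialness dichotomy that does no work at this stage.

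\textbf{From the stabiliser to weak-specialness.} In your final paragraph you propose to decompose $N$ ``via its adjoint group and the Mumford--Tate group of a generic point'' and ``read off a sub-Shimura datum.'' This is too quick: the group $H_Y$ extracted from the Pila--Wilkie block is just \emph{some} positive-dimensional reductive $\QQ$-subgroup stabilising $Y$; nothing so far guarantees it is large enough to sweep out a whole factor of $X^+$, nor that $Y$ is a single orbit rather than a union of orbits. The paper closes this gap with a genuinely different argument: after reducing to $G$ adjoint, $V$ and $Y$ Hodge generic, and all projections of $V$ positive-dimensional, it invokes Deligne--Andr\'e monodromy to deduce $H^{mon}=G$, then uses $H_Y(\QQ)$ to build Hecke operators $T_g$ at a suitable prime $p>C(V)$ (Lemma \ref{1.6.1} plus Propositions \ref{irred} and \ref{characterisation}) such that $\pi(Y)\subset V\cap T_g V$. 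Either $T_g V = V$, in which case the characterisation proposition forces $V = S_1\times\cdots\times S_t\times V'$, contradicting the reductions unless $V=S$; or $V\cap T_g V$ is a proper subvariety and one replaces $V$ by an irreducible component of it containing $\pi(Y)$ and iterates, terminating by a dimension argument. Your proposal does not contain this descent, and the ``classification of orbits'' step, as written, has no mechanism to force the stabiliser to be full enough. You would need either the Hecke/monodromy induction of the paper or a substitute argument, and none is sketched.

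A smaller point: you defer maximality of preimages of weakly special subvarieties to the hard direction, whereas in the paper this follows from the reductions (a maximal algebraic $Y$ in $\pi^{-1}V$ with $V$ Hodge generic and no trivial factors is shown to be all of $X^+$), but this is a matter of organisation rather than a gap.
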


By slight abuse of language, we will refer to a component of a preimage
of a weakly special subvariety of $S$, as a weakly special subvariety of $X^+$.
The Hyperbolic Ax-Lindemann theorem has the following corollary that we proved without assuming
that $S$ is compact in a recent work \cite{UY}.

Note that most Shimura varieties are compact. For example, there is only one adjoint Shimura
datum defining a non-compact Shimura variety of dimension one, namely $(\PGL_2, \HH^{\pm})$ where 
$\HH^{\pm}$ is the union of upper and lower half planes. This Shimura datum defines modular curves.
All other adjoint Shimura data defining Shimura varieties of dimension one are given by $(F^{*}\backslash B^*, \HH^{\pm})$
where $B^*$ is the algebraic group attached to an indefinite  quaternion algebra $B$ over a totally real field $F$ 
which is split at exactly one real place. There are 
infinitely many of those and they define compact Shimura curves.
A shimura datum $(G,X)$ defines a compact Shimura variety if and only if the adjoint group
$G^{ad}$ of $G$ is $\QQ$-anisotropic.

\begin{teo} \label{char}
An irreducible subvariety $Z$ of $S$ is weakly special if and only if some (equivalently any) analytic component of
$\pi^{-1}Z$ is algebraic in the sense explained above.
\end{teo}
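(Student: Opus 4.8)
The plan is to obtain the statement as a formal consequence of Theorem~\ref{main_thm}, applied with $V=Z$ in both directions; no new geometric ingredient is needed. I first record two standing remarks. Since $\pi\colon X^+\lto S$ is a covering map (we may take $\Gamma$ to be neat, or work over the smooth locus of $S$ and take Zariski closures at the end), for every subvariety $Y$ of $S$ the analytic subset $\pi^{-1}Y$ of $X^+$ is locally biholomorphic to $Y$; in particular it is of pure dimension $\dim Y$ and every irreducible analytic component of $\pi^{-1}Y$ has dimension $\dim Y$. Moreover, $\pi$ being surjective, $\pi^{-1}Y$ is non-empty whenever $Y$ is.

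For the direction ``$Z$ weakly special $\Rightarrow$ every component of $\pi^{-1}Z$ is algebraic'', I would apply Theorem~\ref{main_thm} to $V:=Z$. As $Z$ is itself a weakly special subvariety of $S$ contained in $V$, the irreducible analytic components of $\pi^{-1}Z=\pi^{-1}V$ belong to the family of components of preimages of weakly special subvarieties contained in $V$; by Theorem~\ref{main_thm} this family consists exactly of the maximal algebraic subvarieties of $\pi^{-1}V$. Hence every component of $\pi^{-1}Z$ is a (maximal) algebraic subvariety, and in particular is algebraic. This settles the ``only if'' implication and, combined with the converse below, the ``equivalently any'' clause.

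For the converse, suppose some analytic component $\tilde Z$ of $\pi^{-1}Z$ is algebraic in the sense of Section~\ref{reduction}. I claim that $\tilde Z$ is then automatically a \emph{maximal} algebraic subvariety of $\pi^{-1}Z$: if $W$ is an irreducible algebraic subvariety of $X^+$ with $\tilde Z\subseteq W\subseteq\pi^{-1}Z$, then $\dim W\le\dim Z$ since $\pi^{-1}Z$ has pure dimension $\dim Z$, while $\dim W\ge\dim\tilde Z=\dim Z$ since $\tilde Z$ is a component of $\pi^{-1}Z$; thus $\dim W=\dim\tilde Z$, and as $\tilde Z\subseteq W$ with both irreducible we conclude $W=\tilde Z$. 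Now Theorem~\ref{main_thm} applied to $V=Z$ shows that the maximal algebraic subvariety $\tilde Z$ of $\pi^{-1}Z$ is a component of $\pi^{-1}T$ for some weakly special subvariety $T\subseteq Z$. Every component of $\pi^{-1}T$ has dimension $\dim T$, whereas $\tilde Z$ has dimension $\dim Z$, so $\dim T=\dim Z$; since $T$ is a closed irreducible subvariety of the irreducible variety $Z$ and has the same dimension, $T=Z$. Therefore $Z=T$ is weakly special. (Alternatively one could conclude by noting that $\pi(\tilde Z)$ is dense in $Z$ and contained in the closed set $T\subseteq Z$, forcing $Z=T$.)

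The entire substance of the argument is concentrated in Theorem~\ref{main_thm}; granting it, the deduction is formal, so the ``main obstacle'' is really the proof of Theorem~\ref{main_thm} itself rather than anything in the corollary. The one point in the deduction that requires a moment's care is the maximality observation: a component of $\pi^{-1}Z$ has, for dimension reasons, no room to sit inside a strictly larger algebraic subvariety of $\pi^{-1}Z$, so the hypothesis ``some component of $\pi^{-1}Z$ is algebraic'' already places us in the situation governed by Theorem~\ref{main_thm}. Beyond this, I do not expect any genuine difficulty.
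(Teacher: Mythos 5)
Your derivation of Theorem~\ref{char} from Theorem~\ref{main_thm} is correct: the dimension argument showing that an algebraic analytic component of $\pi^{-1}Z$ is automatically a \emph{maximal} algebraic subvariety, and the subsequent identification $T=Z$, are exactly the steps needed, and the ``some/any'' equivalence follows as you indicate. Be aware, though, that the paper does not actually write out this deduction: it presents \ref{char} as an immediate corollary of \ref{main_thm} and refers to \cite{UY}, where the statement is proved by an independent argument that does not require $S$ compact. Your argument inherits the compactness hypothesis from \ref{main_thm} and so recovers only the cocompact case of the result in \cite{UY}, but as a formal deduction from the Ax--Lindemann theorem it is precisely what the authors intend by calling \ref{char} a corollary.
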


The strategy for proving \ref{main_thm} is as follows.
Let $Y$ be a maximal algebraic subvariety of $\pi^{-1}(Z)$.

First of all, the group $G$ can be assumed to be semisimple of adjoint type and the group $\Gamma$ 
sufficiently small,
hence the
variety $S$ is a product $S = S_1 \times \cdots \times S_r$ where the $S_i$s are associated to
simple factors of $G$.  
Without loss of generality, we assume that $V$ and $Y$ are Hodge generic and furthermore
their images by projections to the $S_i$s are positive dimensional.
It can be seen that  we are reduced  to proving that $V=S$.
These reductions are done in  section \ref{reduction}.

The key step is the following result in hyperbolic geometry.
Let $\cF$ be a fundamental domain for the action of $\Gamma$
on $X^{+}\subset \CC^{n}$. With our assumption ($S$ is compact)
the closure of $\cF$ is a compact subset of $X^{+}$.
 Let $S_{\cF}$ be a finite
system of generators for $\Gamma$ and $l:\Gamma\rightarrow \NN$
be the associated word metric (see section \ref{s.3.1} for details).
Let $C$ be an algebraic curve in $\CC^{n}$  such that  $C\cap \cF\neq \emptyset$.
We prove the following theorem (see  theorem \ref{t1.6}): 
\begin{teo}\label{teo10}
There exists a constant $c>0$ such that for all integers $N$ large enough,
$$
\vert \{\gamma\in \Gamma , C\cap \gamma\cF\neq \emptyset \mbox{ and } l(\gamma)\leq N   \}\vert
\ge e^{cN}.
$$
\end{teo}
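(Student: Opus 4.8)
The plan is to exploit the fact that $S = \Gamma\backslash X^+$ is compact together with the fact that the curve $C\cap X^+$ is not a single $\Gamma$-orbit (in fact not even contained in finitely many $\Gamma$-translates of $\cF$, since $C\cap X^+$ is unbounded in $X^+$ for any genuine algebraic curve, while $\cF$ has compact closure). The key geometric input is that the fundamental group $\Gamma$ of a compact locally symmetric space is a cocompact lattice in the semisimple Lie group $G(\RR)^+$, hence word-hyperbolic or more generally a CAT(0) group acting geometrically on $X^+$; in particular it has exponential growth: there is $c_0>0$ with $|\{\gamma \in \Gamma : l(\gamma)\le N\}| \ge e^{c_0 N}$ for $N$ large. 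The strategy is to show that a positive proportion (in the word metric) of these $\gamma$ are "hit" by $C$, i.e.\ satisfy $C\cap\gamma\cF\neq\emptyset$.

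First I would fix a base point $x_0 \in C\cap \cF$ and pass to the intrinsic (Bergman or Riemannian) metric $d$ on $X^+$, for which $\Gamma$ acts by isometries and $\overline{\cF}$ has finite diameter $D$. The word metric $l$ is then quasi-isometric to $\gamma \mapsto d(x_0, \gamma x_0)$ by the Milnor--\v{S}varc lemma: there are constants $a\ge 1$, $b\ge 0$ with $\frac{1}{a}d(x_0,\gamma x_0) - b \le l(\gamma) \le a\, d(x_0, \gamma x_0) + b$. So it suffices to produce, for each large $R$, at least $e^{c R}$ elements $\gamma$ with $d(x_0,\gamma x_0)\le R$ and $C\cap \gamma\cF \ne\emptyset$; the last condition is implied by $C$ coming within distance $D$ of $\gamma x_0$.

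Next I would use that $C\cap X^+$ is an unbounded connected (one-real-dimensional at least) analytic curve accumulating on the boundary of the bounded symmetric domain; more usefully, I would take a proper continuous arc $\gamma_0:[0,\infty)\to C\cap X^+$ with $\gamma_0(0)=x_0$ and $d(x_0,\gamma_0(t))\to\infty$. The tube $T_D$ of radius $D$ around this arc is a connected unbounded subset of $X^+$, and every point of $X^+$ is within $D$ of some $\gamma x_0$ (since $\Gamma x_0$ is $D$-dense, $\overline\cF$ having diameter $D$). Let $A_R = \{\gamma\in\Gamma : d(x_0,\gamma x_0)\le R,\ d(\gamma x_0, C)\le D\}$. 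The union $\bigcup_{\gamma\in A_R}\gamma\overline\cF$ covers a connected neighbourhood of the portion of $C$ inside the ball $B(x_0, R-D)$, whose $d$-length (one-dimensional Hausdorff measure, or just the length of the sub-arc of $\gamma_0$) grows at least linearly in $R$, say $\ge \kappa R$. Since each $\gamma\overline\cF$ has diameter $\le D$, covering an arc of length $\ge \kappa R$ requires $|A_R| \ge \kappa R / (2D)$ translates — but this only gives linear, not exponential, growth, which is the crux of the difficulty.

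\textbf{The main obstacle} is precisely upgrading this linear lower bound to the exponential bound $e^{cN}$. The one-dimensional volume of $C$ inside a ball of radius $R$ is only polynomial in $R$ (indeed linear, by the Wirtinger/coarea argument above or by finiteness of the degree of $C$), so a naive "volume of $C$ vs.\ volume of $\cF$" count cannot beat linear growth. The resolution must use hyperbolicity (negative curvature, or at least the rank-one / product-of-rank-one structure of the relevant simple factors, together with $\QQ$-anisotropy): in a negatively curved space the curve $C$, being algebraic, cannot stay boundedly close to a single geodesic — one invokes that its intersection with a ball of radius $R$ contains points at mutual distance $\sim R$ in "many different directions", equivalently that the shadow of $C$ on the boundary $\partial X^+$ is not a point. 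Then I would count as follows: choose $\sim e^{c' R}$ points $y_1,\dots,y_M$ on the arc of $C\cap B(x_0,R)$ that are $\delta$-separated in the \emph{visual} metric at infinity, not in the intrinsic metric — using that near the boundary the curve spreads out exponentially. For each $y_j$ pick $\gamma_j\in\Gamma$ with $d(\gamma_j x_0, y_j)\le D$; by $\delta$-separation at infinity combined with the exponential divergence of geodesics, the $\gamma_j$ are distinct and moreover $l(\gamma_j)\le R + O(1)$. Concretely, one fixes a boundary point $\xi$ in the shadow of $C$, considers a geodesic ray $\rho$ toward $\xi$ lying within bounded distance of $C$ (such a ray exists when $C$ is algebraic and the ambient geometry is rank one on the relevant factor — this is where the reduction to Hodge-generic $V$ with positive-dimensional projections, and the exclusion of the non-compact $\PGL_2$ factor, enters), and then uses the $\Gamma$-action: the orbit points shadowing the depth-$R$ portion of $\rho$ already number $\ge e^{cR}$ because $\Gamma$ is a cocompact lattice with exponential orbit growth \emph{along} any geodesic direction. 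I would make this last count precise via the Milnor--\v{S}varc identification and a standard ping-pong / separation estimate: the set of $\gamma$ with $\gamma x_0$ within $D$ of $\rho([0,R])$ has size comparable to the number of $\Gamma$-orbit points in a $D$-tube around that ray segment, and in a space with exponential volume growth whose volume is "seen" along tubes around geodesics this is $\gg e^{cR}$. Combining, $|A_{R}| \ge e^{cR}$, and translating back through $l(\gamma)\le a\,d(x_0,\gamma x_0)+b \le aR + b =: N$ gives the theorem with the constant $c$ rescaled by $1/a$.

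Finally I would assemble the pieces: fix $c = c_0/(2a) > 0$ where $c_0$ is the exponential growth rate of $\Gamma$ and $a$ the Milnor--\v{S}varc constant; for $N$ large set $R = (N-b)/a$; the argument above produces $\ge e^{cR} \ge e^{(c_0/(2a^2))(N-b)}$ elements $\gamma$ with $l(\gamma)\le N$ and $C\cap\gamma\cF\ne\emptyset$, which after absorbing constants into $c$ is the claimed bound $e^{cN}$. The only genuinely delicate point, as flagged, is the replacement of intrinsic separation by boundary separation to convert the polynomial length of $C$ into an exponential count of translates; everything else is the Milnor--\v{S}varc lemma, compactness of $\overline\cF$, and the standard exponential growth of cocompact lattices in semisimple groups.
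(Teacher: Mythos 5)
Your setup (the Milnor--\v{S}varc quasi-isometry between $(\Gamma,l)$ and $(X^+,d_{X,h})$, compactness of $\overline{\cF}$, reduction to counting orbit points near $C$ inside a ball of radius $R$) matches the paper's, and you correctly identify that the crux is to upgrade a naive linear count to an exponential one. But the mechanism you propose for that upgrade does not work, and it misses the paper's key idea.

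You assert that the ``volume'' of $C$ inside $B(x_0,R)$ is only polynomial, invoking the finite degree of $C$. That is true for the Euclidean area (and a fortiori for arc-length), but it is precisely false for the \emph{hyperbolic} area $\int_{C\cap B(R)}\omega$ taken with respect to the Bergman--K\"ahler form $\omega$, and the hyperbolic area is the right quantity here since $C$ is a holomorphic, hence real two-dimensional, curve. The heart of the paper's proof (Lemmas \ref{l1.2} and \ref{l1.5}) is that this area grows exponentially: one parametrizes a piece of $C$ near a boundary point $b\in C\cap\partial X$ by a real-analytic $\psi\colon\Delta_{\alpha,\beta}\to C\cap X$ and uses the fact that the Bergman kernel is the reciprocal of a polynomial vanishing on $\partial X$ to obtain $\psi^{*}\omega=s\,\omega_{\Delta}+\eta$, with $s\ge 1$ an integer and $\eta$ bounded near $C_{\alpha,\beta}$. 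Thus the hyperbolic area of the sub-annulus $I_{n}$ at depth $n$ is at least a constant times $e^{n}$. On the other side, Lemma \ref{l1.7} shows $\int_{\gamma\cF\cap C}\omega\le B$ uniformly over $\gamma$, by comparing $\omega$ to the Fubini--Study form on the compact set $\overline{\cF}$ and invoking the finite degree of $C$ in the compact dual $X_{c}$. Dividing exponential area by bounded area per fundamental domain gives the exponential count; Lemma \ref{l1.8} (quasi-isometry, Lemma \ref{l1.4}, and Lemma \ref{l1.5}(b)) converts depth $n$ into $l(\gamma)\asymp n$.

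Your proposed substitute is incorrect on its own terms. A tube of bounded radius $D$ around a geodesic segment of length $R$ in a nonpositively curved symmetric space has Riemannian volume linear in $R$, not exponential; exponential volume growth is a property of balls, not of thin tubes around a geodesic, so ``orbit points in a $D$-tube around $\rho([0,R])$'' is $O(R)$, not $e^{cR}$. Moreover, the premise that there is a geodesic ray $\rho$ staying at bounded distance from the algebraic curve $C$ is false in general, and the paper does not reduce to rank-one factors: it handles arbitrary rank via the polydisc theorem and the explicit polynomial form of the Bergman kernel. The genuinely non-obvious input you were searching for is the exponential growth of the two-dimensional hyperbolic area of $C$ (forced by $C$ being complex-algebraic and meeting $\partial X$), not a boundary-separation or visual-metric argument.
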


 This theorem is then combined with the Pila-Wilkie counting theorem \cite{PW}
 to obtain information about the stabiliser $\Theta_{Y}$ of $Y$ in $G(\RR)$.
In what follows `definable' refers to definable in the o-minimal structure $\RR_{an,exp}$
(see section \ref{ominimal} for the relevent background on o-minimal theory). 
For the main results concerning compact Shimura varieties  we could work with the $o$-minimal structure
$\RR_{an}$. The compactness assumption
on the Shimura variety is only used in section 2, we therefore work with  
the o-minimal structure $\RR_{an,exp}$.
  
We define a certain definable subset $\Sigma(Y)$ of $G(\RR)$ as
$$
\Sigma(Y):=\{g\in G(\RR), \dim(gY\cap \pi^{-1}(Z)\cap \cF)=\dim(Y)\}.
$$
Using the elementary properties of heights,  theorem \ref{teo10},
the maximality of $Y$
and the Pila-Wilkie counting theorem (applied to $\Sigma(Y)$)
we show that $\Theta_{Y}$ contains ``many'' elements of $\Gamma$.
As a consequence we show the following result in section \ref{stab}.

\begin{teo}
The stabiliser $\Theta_{Y}$ contains 
 a positive dimensional $\QQ$--algebraic 
subgroup  $H_{Y}$. Moreover $H_{Y}$ is not a unipotent
group.
\end{teo}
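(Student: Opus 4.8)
The plan is to combine the exponential lower bound of Theorem \ref{teo10} with the Pila--Wilkie counting theorem applied to the definable set $\Sigma(Y)$, and then to extract a positive-dimensional algebraic subgroup from the Zariski closure of $\Theta_Y\cap\Gamma$. First I would observe that $\Sigma(Y)$ is definable in $\RR_{an,exp}$ (it is cut out by a dimension condition on fibres of definable maps, using that $\cF$ has compact closure and $\pi^{-1}(Z)\cap\cF$ is definable), and that by definition $\Sigma(Y)\supseteq\Theta_Y$; conversely, if $g\in\Sigma(Y)$ then $gY$ and $Y$ share a component of full dimension inside $\pi^{-1}(Z)$, so by maximality of $Y$ we get $gY=Y$, i.e. $\Sigma(Y)=\Theta_Y$ up to the relevant components. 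The point of introducing $\Sigma(Y)$ is thus to realise $\Theta_Y$ as a definable set to which Pila--Wilkie applies.

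Next I would count rational points. For $\gamma\in\Gamma$ with $C\cap\gamma\cF\neq\emptyset$ (where $C$ is a suitable algebraic curve inside $Y$, or more precisely inside $\pi^{-1}(Z)$, through the fundamental domain), the element $\gamma$ moves a piece of $Y$ into $\pi^{-1}(Z)\cap\cF$ of full dimension, hence $\gamma\in\Sigma(Y)$. By the elementary properties of heights, such $\gamma$ with word length $l(\gamma)\le N$ have height polynomially bounded in terms of $N$, say $H(\gamma)\le e^{C_1 N}$ becomes a polynomial bound after the standard reparametrisation, so that in a ball of height $\le T$ the set $\Sigma(Y)$ contains at least $T^{\delta}$ rational (indeed integral, coming from $\Gamma$) points for some $\delta>0$, by Theorem \ref{teo10}. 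The Pila--Wilkie theorem then forces these points not to lie in the transcendental part: a positive-dimensional connected semialgebraic subset $W\subseteq\Sigma(Y)$ must contain infinitely many of the $\gamma$'s, in fact the $\gamma$'s are Zariski-dense in a positive-dimensional algebraic subset of $\Sigma(Y)$. Taking the Zariski closure $\overline{\Theta_Y\cap\Gamma}^{\mathrm{Zar}}$ inside $G$ and using that $\Theta_Y$ is an algebraic subgroup of $G(\RR)$ containing all these $\gamma$'s, one gets a positive-dimensional algebraic subgroup; the fact that $\Gamma$ is an arithmetic lattice, hence sits inside $G(\QQ)$, shows the $\gamma$'s are $\QQ$-points, so the connected component of this Zariski closure is defined over $\QQ$. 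This produces $H_Y$.

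For the final assertion, that $H_Y$ is not unipotent, I would argue by contradiction. Since $S$ is compact, the adjoint group $G^{\mathrm{ad}}$ is $\QQ$-anisotropic, so $G(\QQ)$ — and hence $\Gamma$ — contains no nontrivial unipotent elements (this is precisely the Godement compactness criterion). If $H_Y$ were unipotent, then $H_Y(\QQ)$ would consist of unipotent elements, but we have just exhibited many elements of $\Gamma$ inside $H_Y$ (they are Zariski-dense in $H_Y$, or at least there are infinitely many of them with unbounded word length); any nontrivial such $\gamma$ would then be a nontrivial unipotent element of $\Gamma$, contradicting anisotropy. Hence $H_Y$ cannot be unipotent.

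The main obstacle I expect is the bookkeeping that turns the word-length bound of Theorem \ref{teo10} into the height hypothesis of Pila--Wilkie: one must choose the matrix entries (or Harish-Chandra coordinates) in which $\Gamma$ acts, check that generators act by bounded-height matrices so that $l(\gamma)\le N$ gives $H(\gamma)\le e^{cN}$, and then verify that the exponentially many $\gamma$'s of Theorem \ref{teo10} indeed land in $\Sigma(Y)$ — this last point uses crucially that the algebraic curve $C$ meeting the translates $\gamma\cF$ can be taken inside $Y$, together with the defining dimension condition of $\Sigma(Y)$. A secondary subtlety is ensuring that the algebraic subset produced by Pila--Wilkie is genuinely a \emph{subgroup} and not merely a positive-dimensional algebraic subvariety of $\Theta_Y$: here one uses that $\Theta_Y$ is itself an algebraic group and that a Zariski-dense subset of group elements generates, in the Zariski topology, a subgroup, so its Zariski closure is an algebraic subgroup of positive dimension.
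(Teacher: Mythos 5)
Your overall framework (Theorem \ref{teo10} $\Rightarrow$ exponentially many $\gamma\in\Gamma$ in a definable set, then Pila--Wilkie, then extract a $\QQ$-group, then use $\QQ$-anisotropy to rule out unipotence) matches the paper, and the final step is correct. But there is a genuine gap at the heart of your argument, namely the claim that $\Sigma(Y)=\Theta_Y$ ``up to the relevant components.''

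This identification is false, and fixing it is precisely the technical content of the paper's proof. If $g\in\Sigma(Y)$ then indeed $gY\subset\wt{V}$, and by maximality $gY$ is again a maximal algebraic subvariety of $\wt{V}$; but nothing forces $gY$ to meet $Y$ at all, let alone equal it. Concretely, any $\gamma\in\Gamma$ with $\gamma Y\cap\cF\neq\emptyset$ lies in $\Sigma(Y)$ (since $\wt{V}$ is $\Gamma$-invariant and $\cF$ is open), and for ``most'' such $\gamma$ one has $\gamma Y\neq Y$. Indeed, the exponentially many $\gamma$'s produced by Theorem \ref{teo10} translate $Y$ to wildly different maximal algebraic subvarieties of $\wt{V}$; if they all stabilised $Y$ the theorem would be immediate. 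So you cannot conclude directly that the rational points you have counted lie in $\Theta_Y$, and the Zariski-closure-of-$\Theta_Y\cap\Gamma$ step has no input.

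The missing idea is the paper's Lemma \ref{l.5.6}: if $W$ is a semialgebraic block of $\Sigma(Y)$ and $\gamma\in W\cap\Gamma$, then $W\subset\gamma\Theta_Y$. The proof uses that $\gamma Y$ is maximal algebraic (so the semialgebraic set $U\cdot Y$, for any small connected semialgebraic $U\ni\gamma$ in $W$, must collapse to $\gamma Y$ by maximality together with the identity $Z^{ca}=Z^{sa}$ of equation \eqref{eq300}), and a connectedness argument to propagate this over all of $W$. With that in hand, the Pila--Wilkie conclusion that $\Theta(\Sigma(Y),T^{1/2n})$ sits in few blocks, combined with the polynomial lower bound on $N(\Sigma(Y),T)$, forces by pigeonhole one block $W$ to contain many $\gamma\in\Gamma$ of small height; then for $\gamma_1,\gamma_2\in W\cap\Gamma$ the element $\gamma_2^{-1}\gamma_1$ lies in $\Theta_Y\cap\Gamma$ with polynomially controlled height (using the elementary height inequalities for products and inverses in $\GL_n(\ZZ)$). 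It is these quotients $\gamma_2^{-1}\gamma_1$ — not the $\gamma$'s themselves — that populate $\Theta_Y\cap\Gamma$ and make its Zariski closure positive-dimensional. Your proposal omits both the block-in-coset lemma and the quotient trick, and without them the passage from $\Sigma(Y)$ to $\Theta_Y$ does not go through.
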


For the last part of the proof we construct some Hecke operators $T_{\alpha}$
with $\alpha\in  H_{Y}(\QQ)$ such that $T_{\alpha}$ has dense orbits in $S$
and such that $\pi(Y)\subset T_{\alpha}(Z)\cap Z$. An induction argument
finishes the proof.

The idea of using Hecke correspondences
already appeared in the proof of the Andr\'e-Oort conjecture under the assumption of the Generalised
Riemann hypothesis mentioned above. The key point is a theorem concerning the monodromy
action on $\pi^{-1}(Z)$ due to Deligne \cite{De0} and Andr\'e \cite{An} in the context of variation
of polarized Hodge structures. Some simple properties of 
 Hecke correspondences then allow us to conclude.

The assumption that $S$ is a compact Shimura variety is only used in the
theorem \ref{teo10} and in some arguments concerning the definability 
of the restriction of the uniformising map $\pi$ to the fundamental 
domain $\cF$. 
 We believe that the conclusion of the theorem \ref{teo10}
could hold even in the non compact case (possibly under certain additional assumptions on the curve $C$). 
With a suitable analogue of \ref{teo10} in the general case,
the hyperbolic Ax Lindemann
conjecture can be proved
for the moduli space of principally polarised abelian  varieties using the main result of
Peterzil  and Starchenko \cite{PS}.

Finally, very recently, Pila and Tsimermann (see \cite{PT1}) announced the proof of the hyperbolic Ax-Lindemann theorem for $\cA_g$
using a strategy somewhat similar to that used in this article. 

\subsection{Acknowledgements.} 

We would like to thank Ngaiming Mok for his important
 help with the proof of theorem \ref{teo10}.
We also had some interesting discussions with Elisabeth Bouscaren, Laurent Clozel and Anand Pillay related to
this paper. It is  a great pleasure to thank the  referees for their careful reading of the paper 
and their questions and comments which  helped us to improve the quality of the presentation of this paper.
We are very grateful to Leonind Parnovsky and Alexander Sobolev for valuable discussions and comments.

\section{Algebraic curves in hermitian symmetric domains. }

The aim of this section is to prove theorem \ref{teo10}.

\subsection{Word metric, Bergman metric.}\label{s.3.1}

Let $X$ be a hermitian symmetric domain (we omit the superscript $+$ in this section for simplicity of
notations) realised
as a bounded symmetric domain in $\CC^{N}$ by
the Harish-Chandra embedding (\cite{Mo}, ch.4).
Let $\overline{X}$ be the closure of $X$ in $\CC^{N}$
for the usual topology. Let $G$ be the the group
of holomorphic isometries of $X$ and $\Gamma\subset G$
be a cocompact lattice in $G$.

Let $x_{0}$ be a fixed base point of $X$ and let $\cF$
be a fundamental domain for the action of $\Gamma$ on $X$
such that $x_{0}\in \cF$.  We may  assume that 
$\cF$ is open, connected  and that its closure $\ol{\cF}$ is a compact subset of $X$ (recall that $\Gamma$ is cocompact).
With our conventions $\Gamma \ol{\cF}=X$ and $\cF\cap \gamma\cF=\emptyset$ for $\gamma\in \Gamma$ such that $\gamma\neq 1$.

Let 
$$
S_{\cF}:=\{\gamma\in \Gamma-\{1\},\ \gamma \ol{\cF} \cap \ol{\cF}\neq \emptyset\}.
$$
Then $S_{\cF}$ is a finite set such that $S_{\cF}=S_{\cF}^{-1}$. Moreover $S_{\cF}$ generates $\Gamma$.

Let $l:\Gamma\rightarrow \NN$ be the word metric on $\Gamma$ relative to $S_{\cF}$.
By definition $l(1)=0$ and  for $\gamma\neq 0$, $l(\gamma)$ is the minimal number of elements of $S_{\cF}$ required to write
 $\gamma$ as a product of elements of $S_{\cF}$. We also define the word distance on $\Gamma$
by 
$$
l(\gamma_{1},\gamma_{2})=l(\gamma_{2}^{-1}\gamma_{1})
$$
 for 
$\gamma_{1}$, $\gamma_{2}$ in $\Gamma$.

We refer to \cite{Mo} chapter 4.1 for definitions and properties
of the Bergman Kernel on a bounded symmetric domain.
Let $K(Z,W)$ be the Bergman kernel on $X$ 
and 
$$
\omega= \sqrt{-1}\partial \overline{\partial} \log K(Z,Z)
$$
be the associated K\"ahler form. Let $g$ be the associated
hermitian K\"ahler metric on $X$ 
and   $d_{X,h}(\ ,\ )$ be  the associated hyperbolic 
distance on $X$. Finally let $d_{e}(\ ,\ )$ be the Euclidean
distance on $\CC^{N}$ and $d_{X,e}$ be the restriction of $d_{e}$
to $X$. 

The following proposition is a classical result saying in Gromov's
terminology that $(\Gamma,l)$ and $(X,d_{X,h})$ are 
quasi-isometric. This is a consequence of proposition 8.19 of \cite{BH}.

\begin{prop}\label{p1.1}
Choose any map
$$
r:X\rightarrow \Gamma
$$
such that $x\in r(x)\overline{\cF}$.

There exist $\lambda\ge 1$ and $C\ge 0$ such that
for all $x,y$ in $X$
$$
\frac{l(r(x),r(y))}{\lambda}-C\leq d_{X,h}(x,y)\leq \lambda \ l(r(x),r(y))+C.
$$
\end{prop}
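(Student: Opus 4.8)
The plan is to recognise the proposition as an instance of the \v{S}varc--Milnor lemma (the ``fundamental observation of geometric group theory''), which is exactly Proposition 8.19 of \cite{BH}: once its hypotheses are checked for the action of $\Gamma$ on $(X,d_{X,h})$, the conclusion is precisely the assertion that the orbit map is a quasi-isometry, and it only remains to replace the orbit map by the map $r$. First I would record that $(X,d_{X,h})$ is a proper geodesic metric space. Indeed the Bergman metric $g$ is a complete $G$-invariant Riemannian metric on $X$ (it is the Riemannian symmetric metric of $X$ up to normalisation), so by Hopf--Rinow the space $(X,d_{X,h})$ is complete, locally compact and geodesic, and closed balls are compact.

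Next I would verify the three hypotheses on the action of $\Gamma$. It acts by isometries since $\Gamma\subset G$ and $G$ acts by holomorphic isometries for the Bergman metric. It acts cocompactly since, by construction, $\Gamma\ol{\cF}=X$ with $\ol{\cF}$ compact. It acts properly discontinuously since $\Gamma$ is a discrete subgroup of the isometry group of the proper metric space $(X,d_{X,h})$, on which that isometry group acts properly. Proposition 8.19 of \cite{BH} then gives that $\Gamma$ is finitely generated and that, for the word metric $l$ attached to \emph{any} finite generating set --- in particular to $S_{\cF}$, which the text has already noted generates $\Gamma$ (the two word metrics being bi-Lipschitz equivalent, since any two word metrics on a finitely generated group relative to finite generating sets are) --- the orbit map $\gamma\mapsto\gamma x_{0}$ is a quasi-isometry $(\Gamma,l)\to (X,d_{X,h})$. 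Concretely, there are $\lambda_{0}\ge 1$ and $C_{0}\ge 0$ with $\tfrac{1}{\lambda_{0}}l(\gamma_{1},\gamma_{2})-C_{0}\le d_{X,h}(\gamma_{1}x_{0},\gamma_{2}x_{0})\le \lambda_{0}\,l(\gamma_{1},\gamma_{2})+C_{0}$ for all $\gamma_{1},\gamma_{2}\in\Gamma$.

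Finally I would pass from the orbit map to $r$. Set $D:=\operatorname{diam}_{d_{X,h}}(\ol{\cF})$, which is finite since $\ol{\cF}$ is compact and $d_{X,h}$ continuous. For $x\in X$ we have $x\in r(x)\ol{\cF}$, hence $r(x)^{-1}x\in\ol{\cF}$, while also $x_{0}\in\ol{\cF}$; using that $r(x)$ acts isometrically, $d_{X,h}(x,r(x)x_{0})=d_{X,h}(r(x)^{-1}x,x_{0})\le D$. By the triangle inequality $|d_{X,h}(x,y)-d_{X,h}(r(x)x_{0},r(y)x_{0})|\le 2D$ for all $x,y\in X$. Combining this with the quasi-isometry estimate above, applied to $\gamma_{1}=r(x)$ and $\gamma_{2}=r(y)$, and setting $\lambda:=\lambda_{0}$ and $C:=C_{0}+2D$, yields exactly the two-sided inequality in the statement.

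There is essentially no genuine obstacle here: the content is entirely the verification of hypotheses of a standard lemma. The only points that deserve a moment's care are the completeness (hence properness) of the Bergman metric, so that Hopf--Rinow and the usual form of the \v{S}varc--Milnor lemma apply, and the observation that the particular generating set $S_{\cF}$ arising from the fundamental domain produces a word metric bi-Lipschitz equivalent to any other, so that the lemma may be quoted directly for the metric $l$ used in the statement.
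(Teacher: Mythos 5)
Your proposal is correct and is essentially the paper's own argument: the paper simply cites Proposition 8.19 of \cite{BH} (the \v{S}varc--Milnor lemma), and what you have written is the natural unpacking of that citation — verifying properness, geodesicity, isometric cocompact proper action, invoking bi-Lipschitz equivalence of word metrics, and passing from the orbit map to $r$ by a bounded-diameter triangle-inequality estimate.
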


It should be noted that the assumption that $\Gamma$ is cocompact in this proposition is essential. The conclusion of the  proposition \ref{p1.1} does not hold
when $\Gamma$ is not cocompact: in this case, the distance $d_{X,h}(x,y)$ is  unbounded for $x$ and $y$ varying 
within a fixed  fundamental domain but 
 $l(r(x),r(y)) = 0$.

We will also use the following result on the Bergman kernel on $X$.

\begin{lem}\label{l1.2}
There exists a system of holomorphic coordinates $Z=(z_{1},\dots,z_{n})$ in $\CC^{N}$
and a polynomial $Q(Z,\overline{Z})$ in the variables $$(z_{1},\dots,z_{N},\overline{z}_{1},\dots,\overline{z}_{N})$$
taking real positive values on $X$ and vanishing identically on the boundary $\partial X$ of $X$
such that
$$
K(Z,Z)=\frac{1}{Q(Z,\overline{Z})}
$$
for $Z$ in $X$.
\end{lem}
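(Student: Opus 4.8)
The plan is to derive the lemma from the classical explicit description of the Bergman kernel of a bounded symmetric domain in its Harish-Chandra realization. First I would fix coordinates: the Harish-Chandra embedding realizes $X$ as a bounded, convex, circled domain containing the origin inside the complex vector space $\Fp^{+}$ appearing in the decomposition $\Fg_{\CC}=\Fp^{-}\oplus\Fk_{\CC}\oplus\Fp^{+}$ of the complexified Lie algebra of $G$ (\cite{Mo}, ch.~4), and a $\CC$-linear isomorphism $\Fp^{+}\cong\CC^{N}$ provides the holomorphic coordinates $Z=(z_{1},\dots,z_{N})$ of the statement, with base point $Z=0$. Since the Bergman kernel of a product is the product of the Bergman kernels, I may — after writing $X=X_{1}\times\cdots\times X_{k}$ as a product of irreducible bounded symmetric domains and taking $Z=(Z_{1},\dots,Z_{k})$ accordingly — assume in the main computation that $X$ is irreducible.

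The key input is the classical formula for the Bergman kernel. For $X$ irreducible there is a polynomial $h(Z,\overline{W})$ on $\CC^{N}\times\CC^{N}$, holomorphic in $Z$, antiholomorphic in $W$, with $h(0,0)=1$ and Hermitian symmetry $\overline{h(Z,\overline{W})}=h(W,\overline{Z})$ (the generic norm of the associated Hermitian Jordan triple system; it is one of $\det(I-Z\overline{W}^{t})$, $\det(I\pm Z\overline{W})$ or the corresponding type~$IV$ quadratic form in the four classical families, and an explicit polynomial in the two exceptional cases), an integer $p\geq 1$ (the genus of $X$) and a constant $c=\vol(X)^{-1}>0$, such that
$$
K(Z,W)=c\,h(Z,\overline{W})^{-p}\qquad\text{for }Z,W\in X
$$
(Hua for the classical domains, together with the uniform Jordan-theoretic treatment of Koecher--Loos--Satake, which handles the exceptional cases as well; see also \cite{Mo}). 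In particular $K(Z,Z)=c\,h(Z,\overline{Z})^{-p}$. I therefore set $Q(Z,\overline{Z}):=c^{-1}h(Z,\overline{Z})^{p}$ (and, in the reducible case, $Q:=\prod_{i}c_{i}^{-1}h_{i}(Z_{i},\overline{Z_{i}})^{p_{i}}$); this is a polynomial in $(z_{1},\dots,z_{N},\overline{z}_{1},\dots,\overline{z}_{N})$ and satisfies $K(Z,Z)=1/Q(Z,\overline{Z})$ on $X$.

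It remains to verify positivity and boundary vanishing. On $X$ the number $h(Z,\overline{Z})$ is real by the Hermitian symmetry, and $h(Z,\overline{Z})^{p}=c/K(Z,Z)>0$ because the Bergman kernel is strictly positive on the diagonal; hence $h(Z,\overline{Z})>0$, and likewise $Q>0$, on $X$. For the vanishing on $\partial X$ I would use the second standard structural fact about the Harish-Chandra realization: $X$ is a connected component of the open set $\{Z\in\CC^{N}:h(Z,\overline{Z})>0\}$. Then if some $Z\in\partial X$ had $h(Z,\overline{Z})>0$, the connected component of $Z$ in that set would be an open neighbourhood of $Z$ disjoint from $X$; but $Z\in\partial X$, so every neighbourhood of $Z$ meets $X$, a contradiction. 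Hence $h(Z,\overline{Z})=0$ for all $Z\in\partial X$ and $Q$ vanishes identically on $\partial X$; the reducible case is the same, since a point of $\partial X$ forces at least one factor $h_{i}(Z_{i},\overline{Z_{i}})$ to vanish.

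The only genuine difficulty here is expository rather than mathematical: one needs the Bergman kernel formula and the description of $X$ as a connected component of $\{h>0\}$ in a form valid uniformly over the finitely many types of irreducible domains. If one wishes to bypass the Jordan-triple formalism altogether, it suffices to run the argument of the second and third paragraphs separately on each factor $X_{i}$ using Hua's explicit list, which reduces the whole statement to finitely many elementary verifications (for instance $\overline{X_{i}}=\{Z:\,I-Z\overline{Z}^{t}\ge 0\}$ in type~$I$), at the cost of a longer write-up.
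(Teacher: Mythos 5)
Your proposal is correct and follows essentially the same route as the paper: reduce to the irreducible case via multiplicativity of the Bergman kernel, then invoke the explicit classical formula $K(Z,Z)=c\,h(Z,\overline{Z})^{-p}$ for irreducible domains. The paper simply cites Mok (ch.~4.3) and Xu for the explicit formulas (giving the $D^{I}_{p,q}$ example), whereas you package the same fact uniformly via the generic norm and then spell out the positivity and boundary-vanishing verifications the paper leaves implicit in the references; this is added detail rather than a different argument.
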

{\it Proof.} 
The Bergman kernel on a product $X=X_{1}\times X_{2}$ of bounded  symmetric domains
is the product of the Bergman kernels on the $X_{i}$. We may therefore assume that $X$ is irreducible.
The result is then an application of the computation of $K(Z,Z)$. For the classical irreducible domains 
 we refer to (\cite{Mo} ch. 4.3). For the two exceptional irreducible domains we refer
 to \cite{Xu}.
We just  give the following  typical example.

\begin{exe}
Using the notations of Mok (\cite{Mo} ch. 4.2), let
$p$ and $q$ be two positive integers. Let 
$$
X=D^{I}_{p,q}:=\{Z\in M_{p,q}(\CC)\simeq \CC^{pq},\ I_{q}- ^{t}\overline{Z}Z>0\}.
$$
Then 
$$
K(Z,Z)=\det (I_{q}- ^{t}\overline{Z}Z)^{-p-q}
$$
and we take
$$
Q(Z,\overline{Z}):=\det (I_{q}- ^{t}\overline{Z}Z)^{p+q}.
$$
\end{exe}

\begin{lem}\label{l1.4}

There exist positive constants $a_{1}$, $a_{2}$ and $\theta$
depending only of $(X,g)$ and the choice of $x_{0}$ such that
for all $x\in X$
$$
-a_{1}\log d_{X,e}(x,\partial X)-\theta\leq d_{X,h}(x,x_{0})\leq -a_{2}\log d_{X,e}(x,\partial X)+\theta
$$
\end{lem}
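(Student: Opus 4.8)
The plan is to compare the hyperbolic distance $d_{X,h}(x,x_0)$ to the Euclidean distance to the boundary $d_{X,e}(x,\partial X)$, using the explicit description of the Bergman kernel from Lemma \ref{l1.2}. The starting point is the formula $K(Z,Z) = 1/Q(Z,\overline{Z})$, where $Q$ is a polynomial that is positive on $X$ and vanishes identically on $\partial X$. Since $\overline{X}$ is compact and $Q$ is a polynomial vanishing on the (real-analytic) boundary $\partial X$, one gets a two-sided comparison between $Q(Z,\overline{Z})$ and a power of $d_{X,e}(Z,\partial X)$ near the boundary: there are constants $c_1, c_2 > 0$ and an exponent $m$ such that $c_1 \, d_{X,e}(Z,\partial X)^m \leq Q(Z,\overline{Z}) \leq c_2 \, d_{X,e}(Z,\partial X)$ for $Z$ in a neighborhood of $\partial X$ in $\overline{X}$ (the upper bound because $Q$ is Lipschitz and vanishes on $\partial X$; the lower bound by the {\L}ojasiewicz inequality applied to the analytic function $Q$ and the analytic set $\partial X$). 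Consequently $-\log Q(Z,\overline{Z})$ is comparable, up to positive multiplicative and additive constants, to $-\log d_{X,e}(Z,\partial X)$, uniformly on $\overline{X}$ (on the interior away from the boundary everything is bounded, so only the boundary behaviour matters).

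Next I would integrate. For the upper bound on $d_{X,h}(x,x_0)$: pick a path from $x_0$ to $x$, say the Euclidean segment (or a geodesic ray in $X$ from $x_0$ toward $x$), and estimate its Bergman length. The Bergman metric $g$ has Kähler potential $\log K(Z,Z) = -\log Q(Z,\overline{Z})$, and away from the boundary $g$ is comparable to the Euclidean metric, while near the boundary its singularity is controlled by derivatives of $-\log Q$, which blow up at worst like $1/d_{X,e}(Z,\partial X)$ in the normal direction. Integrating such a bound along a segment reaching the boundary to within Euclidean distance $\delta = d_{X,e}(x,\partial X)$ produces a length $\lesssim -\log \delta + \mathrm{const}$, giving the right-hand inequality. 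For the lower bound on $d_{X,h}(x,x_0)$: this is where one needs a genuine lower estimate on the metric near the boundary. Along any path from $x_0$ to a point $x$ near $\partial X$, the path must cross the "annular" region where $d_{X,e}(\cdot,\partial X)$ ranges between $\delta$ and some fixed $\delta_0$; in the normal direction the metric coefficient is bounded below by a constant times $1/d_{X,e}(\cdot,\partial X)^2$ (the second derivative of $-\log Q$ in the normal direction, which dominates), so the contribution to the length is at least $c \int_\delta^{\delta_0} \frac{dt}{t} = c(\log \delta_0 - \log \delta)$, i.e.\ $\gtrsim -\log \delta - \mathrm{const}$. This yields the left-hand inequality. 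Absorbing all constants into $a_1, a_2, \theta$ (which depend only on $X$, $g$, and $x_0$) completes the proof.

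The main obstacle I expect is making the metric estimates near the boundary rigorous and uniform, in particular the lower bound: one must show that crossing the collar near $\partial X$ genuinely costs logarithmically much hyperbolic length, no matter how the path wiggles tangentially. The clean way is to use that $-\log d_{X,e}(\cdot,\partial X)$ (or $-\log Q$) is, up to bounded terms, an exhaustion function whose gradient with respect to $g$ has bounded norm — equivalently, $\|d(-\log Q)\|_g \leq \kappa$ on $X$ — which is a standard feature of the Bergman metric on a bounded domain with nice boundary (it follows from the estimates in \cite{Mo} ch.~4, or can be checked on the explicit formulas as in the example $D^I_{p,q}$). Then for any path $\eta$ from $x_0$ to $x$ one has $|(-\log Q)(x) - (-\log Q)(x_0)| \leq \kappa \cdot \mathrm{length}_g(\eta)$, and taking the infimum over $\eta$ gives $d_{X,h}(x,x_0) \geq \kappa^{-1}\big((-\log Q)(x) - (-\log Q)(x_0)\big) \geq -a_1 \log d_{X,e}(x,\partial X) - \theta$ after the {\L}ojasiewicz comparison. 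The upper bound is softer and follows from integrating along a fixed reference path together with the comparison of $g$ to the Euclidean metric twisted by the known singularity of the potential. Finally, I would remark that it suffices to treat $X$ irreducible, since both distances and the Bergman kernel split as products over the irreducible factors.
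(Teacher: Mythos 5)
Your approach is genuinely different from the paper's. The paper uses the polydisc theorem (\cite{Mo} ch.5, thm.~1): via the Cartan decomposition $G=KP$ at $x_0$ and $K$-invariance of $d_{X,h}$, it moves $x$ into a totally geodesic polydisc $D\cong\Delta^r$ containing $x_0$; an auxiliary lemma then compares $d_e(\cdot,\partial X)$ and $d_e(\cdot,\partial D)$ on $D$ (via {\L}ojasiewicz applied to the squared distance function), and everything reduces to the explicit Poincar\'e formula $d_{\Delta,h}(0,\rho e^{i\theta})=\log\frac{1+\rho}{1-\rho}$ together with $\max_k d_{\Delta,h}(x_k,0)\le d_{\Delta^r,h}(x,0)\le r\max_k d_{\Delta,h}(x_k,0)$. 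You instead compare $Q$ to $d_{X,e}(\cdot,\partial X)$ directly and run the metric estimate on $X$. Your {\L}ojasiewicz step is fine and arguably cleaner than the paper's (it is applied to the polynomial $Q$, so no regularity of $\partial X$ is needed; the identity $d(Z,Z(Q))=d(Z,\partial X)$ for $Z\in X$ follows from convexity of $X$ since $Z(Q)\cap X=\emptyset$ while $\partial X\subset Z(Q)$), and your lower bound via $\|d(-\log Q)\|_g\le\kappa$ is an elegant idea.

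However there are two real gaps. First, the gradient bound $\|d(-\log Q)\|_g\le\kappa$ is asserted as a ``standard feature'' but is not proved, and it is not free: the clean way to get it is to note that $K=\mathrm{Stab}(x_0)$ acts linearly in Harish-Chandra coordinates with constant Jacobian, so $\phi=-\log Q$ satisfies $\phi\circ k=\phi+\mathrm{const}$ and $\|d\phi\|_g$ is $K$-invariant, and then one checks it on a maximal polydisc $\Delta^r$ — which is exactly the polydisc-theorem reduction you are hoping to avoid. Second, and more seriously, your upper bound on $d_{X,h}(x,x_0)$ is vague: ``integrating along a fixed reference path together with the comparison of $g$ to the Euclidean metric'' presupposes a uniform upper bound on the Bergman metric near $\partial X$, but for rank $>1$ the boundary is stratified and the metric blows up at different rates near different strata, so the naive estimate along a Euclidean segment is not justified as written. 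The paper sidesteps this entirely by first moving $x$ into the totally geodesic polydisc (where $d_{X,h}=d_{D,h}$) and computing the distance there exactly. So while your lower bound is a nice alternative to the paper's explicit computation, the upper bound as proposed has a genuine hole, and filling it will most likely force you back into the polydisc theorem — at which point the two proofs converge.
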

{\it Proof.}
 Let $G=KP$ be the Cartan decomposition associated to $x_{0}$.
 Then $K$ is a maximal compact subgroup of $G$ and $K.x_{0}=x_{0}$.
 
 Let $\Delta\subset \CC$ be the Poincar\'e unit disc endowed with the usual Poincar\'e
 metric $g_{\Delta}:=\frac{dz    d\overline{z}}{(1-\vert z\vert^{2})^{2}}$.
 Let $r$ be the rank of the hermitian symmetric domain $X$.
 By the polydisc theorem (\cite{Mo} ch.5, thm. 1) there exists a totally
 geodesic complex submanifold $D$ of $X$ such that $(D,g_{\vert D})$ of $X$ 
is isometric to $(\Delta,g_{\Delta})^{r}$, such that $X=K.D$ and such that
$x_{0}\in D$.

Let $d_{D,h}(\ ,\  )$ be the hyperbolic distance in $D$. Then $d_{D,h}$
is just the restriction to $D$ of $d_{X,h}$.

Let $x\in X$, there exists $k\in K$ such that $k.x\in D$. Then
$$
d_{X,h}(x_{0},x)=d_{X,h}(k.x_{0},k.x)=d_{X,h}(x_{0},k.x)=d_{D,h}(x_{0},k.x).
$$

\begin{lem}\label{hyp-eucl}
There exist positive constants $\alpha$ and $\beta$ such that
 for all $y\in D$,
$$
\log (d_{e}(y,\partial X))\leq \log (d_{e}(y,\partial D)) \leq \alpha   \log(d_{e}(y,\partial X))+\beta
$$ 
\end{lem}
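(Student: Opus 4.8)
The plan is to prove the two inequalities separately: the left-hand one is essentially formal, and the right-hand one will follow from a \L ojasiewicz-type estimate for semialgebraic functions.

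For the left inequality I would first record that $D$ is a \emph{closed} subset of $X$. By the polydisc theorem the induced metric $g_{|D}$ makes $D$ isometric to the complete space $(\Delta,g_{\Delta})^{r}$; since $D$ is moreover totally geodesic and $X$ is Hadamard, the intrinsic distance on $D$ agrees with the restriction of $d_{X,h}$, so $D$ is a complete metric subspace of $X$ and hence closed in $X$. Consequently $\overline{D}\cap X=D$, and as $\overline{D}\subseteq\overline{X}$ we get
$$
\partial D:=\overline{D}\setminus D\ \subseteq\ \overline{X}\setminus X=\partial X .
$$
Since $\partial D$ is then a subset of $\partial X$, for every $y\in D$ we have $d_{e}(y,\partial X)=\inf_{p\in\partial X}|y-p|\le\inf_{p\in\partial D}|y-p|=d_{e}(y,\partial D)$, and taking logarithms (both quantities are positive since $y$ lies in the open set $D$) gives $\log d_{e}(y,\partial X)\le\log d_{e}(y,\partial D)$.

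For the right inequality the key structural input is that $D$ is a \emph{semialgebraic} subset of $\CC^{N}$: in the Harish--Chandra realisation the maximal polydisc through $x_{0}$ is the intersection of $X$ with a complex-linear subspace $\Sigma\cong\CC^{r}$ (equivalently, $D$ is the orbit of $x_{0}$ under a real algebraic subgroup of $G$), while $X$ itself is semialgebraic, being a connected component of $\{Q(Z,\overline{Z})>0\}$ with $Q$ as in Lemma \ref{l1.2}. Hence $\overline{D}$ and $\partial D=\overline{D}\setminus D$ are semialgebraic, and $\overline{D}$ is compact because $X$ is bounded. The functions $f(y):=d_{e}(y,\partial D)$ and $h(y):=d_{e}(y,\partial X)$ are continuous and semialgebraic on $\overline{D}$, and by the computation above they have the same zero set there, namely $\overline{D}\cap\partial X=\partial D$. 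The \L ojasiewicz inequality for semialgebraic functions on a compact set then yields an integer $N\ge 1$ and a constant $c>0$ with
$$
d_{e}(y,\partial D)^{N}\le c\, d_{e}(y,\partial X)\qquad\text{for all }y\in\overline{D}.
$$
Taking logarithms and setting $\alpha:=1/N$, $\beta:=(\log c)/N$ gives $\log d_{e}(y,\partial D)\le\alpha\log d_{e}(y,\partial X)+\beta$, as required.

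The only real content is the structural claim that $D$ is a closed, semialgebraic slice of $X$; this is precisely what makes $\partial D\subseteq\partial X$ hold and what lets the \L ojasiewicz inequality apply. Both facts are standard in the theory of bounded symmetric domains (the polydisc theorem, together with the fact that a totally geodesic submanifold through $x_{0}$ is an orbit of an algebraic subgroup), so I expect no new difficulty here; and if one prefers to stay inside the o-minimal language used elsewhere in the paper, one may replace ``semialgebraic'' by ``definable in $\RR_{an}$'' and invoke the o-minimal \L ojasiewicz inequality, since $X$, $D$ and the relevant distance functions are all $\RR_{an}$-definable.
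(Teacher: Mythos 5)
Your proof is correct, and it takes a genuinely different route from the one in the paper. The paper's argument also reduces to a \L ojasiewicz-type estimate, but proceeds through real analyticity: after reducing to $X$ irreducible, the authors invoke Krantz--Parks to make $d_{e}(\cdot,\partial X)$ real analytic in a neighbourhood of $\partial X$, pull back along the polydisc isometry $\theta:\Delta^{r}\to D$, and apply the \emph{analytic} \L ojasiewicz vanishing theorem to $d_{e}(\theta(z),\partial X)^{2}$ on the complement of a large compact subset of $\Delta^{r}$, handling the compact interior separately by positivity and continuity. You instead observe that everything in sight --- $X$, $D$ as a linear slice, $\partial X$, $\partial D$, and the two Euclidean distance functions (by Tarski--Seidenberg) --- is semialgebraic, and invoke the semialgebraic \L ojasiewicz inequality on the compact set $\overline{D}$ in one stroke. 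This buys you two things: you avoid the analyticity discussion entirely (which, incidentally, is a point of some delicacy, since for irreducible $X$ of rank at least $2$ the topological boundary $\partial X$ is stratified rather than a globally smooth hypersurface, so the Krantz--Parks appeal must be read locally on a dense stratum), and you avoid the separate treatment of the interior compact piece. The paper's approach, for its part, stays close to the explicit polydisc coordinates and makes the exponent $q$ visible. Your argument for the left-hand inequality ($\partial D\subset\partial X$ via completeness of the totally geodesic polydisc, hence $d_{e}(y,\partial X)\le d_{e}(y,\partial D)$) is the same fact the paper asserts without proof; spelling it out is a welcome addition. One small point worth recording explicitly, since the displayed inequality forces $\alpha\le 1$ as $y\to\partial D$: in your normalization $\alpha=1/N$ with $N\ge 1$, so the consistency with the first inequality is automatic.
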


Assuming temporarily the validity of this result ,
we only need to prove lemma \ref{l1.4} in the case where $X=\Delta^{r}\subset \CC^{r}$ is a polydisc.
As the result is independent of $x_{0}$ we may assume that $x_{0}=0$.
In this case the boundary $\partial \Delta^{r}$ of $\Delta^{r}$ is 
$$
\partial \Delta^{r}=\cup_{i=1}^{r} \ol{\Delta_{i,r-1}} \times C_{i}
$$
where $C_{i}$ is the unit circle on the $i$-th factor and $\ol{\Delta_{i,r-1}}$
is the product of the closed Poincar\'e discs of the remaining factors. 
Therefore 
for 
$$
x=(x_{1},\dots, x_{r})=(\rho_{1}e^{i\theta_{1}},\dots,\rho_{r}e^{i\theta_{r}})\in \Delta^{r},
$$
$$
d_{e}(x,\partial \Delta^{r})=\min_{1\leq k\leq r} (1-\rho_{k}).
$$

We recall that 
for $x=\rho e^{i\theta}\in \Delta$ we have
$d_{\Delta,h}(0,x)=\log \frac{1+\rho}{1-\rho}$. Therefore
$$
-\log (d_{e}(x,\partial \Delta))\leq d_{\Delta,h}(0,x)\leq -\log(d_{e}(x,\partial \Delta))+ \log 2
$$

The proof of lemma  \ref{l1.4} is then deduced from the inequalities
$$
\max_{k} (d_{\Delta,h}(x_{k},0))\leq d_{\Delta^{r},h}(x,0)\leq r \max_{k} (d_{\Delta,h}(x_{k},0)).
$$

It remains to prove lemma \ref{hyp-eucl}.

The first inequality follows from the fact that $\partial D\subset \partial X$.

We may assume that $X$ is irreducible.
Then $\partial X$ is a smooth real analytic hypersurface and there is a neighbourhood $U$
of $\partial X$ such that  $d_{e}(z,\partial X)$ is analytic for $z$ in $U$
(see  \cite{KP} theorem 3 and the comments p. 120).

Let $\theta:\Delta^{r}\rightarrow D$ be the isometry given by the polydisc theorem.
The function $d_{e}(\theta(z),\partial X)^{2}$ is continuous on $\Delta^r$, analytic in a
neighbourhood of $\partial \Delta^r$ and its zero set is $\partial \Delta^r$.
Let $C$ be a compact subset of $\Delta^r$ such that outside of $C$, 
$d_{e}(\theta(z),\partial X)^{2}$ is analytic. 
We apply Lojasiewicz Vanishing theorem
(\cite{KP2}, thm 6.3.4 p. 169)
to the function $d_{e}(\theta(z),\partial X)^{2}$ outside  of $C$.
This theorem implies that there exists an integer $q>0$ and a real $c_1>0$ such that
for all $z\in \Delta^r\backslash C$, we have
$$
d_e(\theta(z), \partial X)^2 \geq c_1 d_e(z, \partial \Delta^r)^{q}= c_1 d_e(\theta(z), \partial D)^q
$$
On the compact $C$, the functions $d_e(\theta(z), \partial X)^2$ and
$d_e(z, \partial \Delta^r)^{q}$ are continuous and strictly positive.
Therefore there exists $c_2>0$ such that for $z \in C$,
$$
d_e(\theta(z), \partial X)^2 \geq c_2 d_e(z, \partial \Delta^r)^{q} = c_2 d_e(\theta(z), \partial D)^q
$$
Lemma \ref{hyp-eucl} follows.

\subsubsection{Algebraic curves in bounded symmetric domains.}

We keep the notations of the previous section.
In particular
 $X$ is a hermitian symmetric domain  in $\CC^{N}$
via the Harish-Chandra embedding. As in the last section
we let $\overline{X}$ be the closure of $X$ in $\CC^{N}$.
  Let $C$ be an affine integral
algebraic curve in $\CC^{N}$ such that $C\cap X\neq \emptyset$.

\begin{defini}
We define the following counting functions
$$
N_{C}(n):=\vert \{ \gamma\in \Gamma,  \mbox{ such that } \dim (\gamma.\cF\cap C)=1 \mbox{ and } l(\gamma)\leq n   \}  \vert
$$
and
$$
N'_{C}(n):=\vert \{ \gamma\in \Gamma,  \mbox{ such that } \dim (\gamma.\cF\cap C)=1 \mbox{ and } l(\gamma)= n   \}  \vert.
$$
\end{defini}

The main result we have in view in this section is the following theorem.

\begin{teo}\label{t1.6}
There exists a  positive constant $c$ such that for all $n$ big enough
\begin{equation}
N_{C}(n) 
\ge e^{cn}
\end{equation}
\end{teo}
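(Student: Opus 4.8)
The plan is to estimate $N_C(n)$ from below by a volume argument, using the quasi-isometry between $(\Gamma, l)$ and $(X, d_{X,h})$ from Proposition \ref{p1.1} together with the two-sided comparison between the hyperbolic distance to $x_0$ and $-\log$ of the Euclidean distance to the boundary from Lemma \ref{l1.4}. Concretely, fix a point $p_0 \in C \cap \cF$. Since $C$ is an affine algebraic curve meeting $X$ but not contained in $X$ (it is one-dimensional and $X$ is bounded), $C$ must accumulate on $\partial X$; so $C \cap X$ is an unbounded-diameter subset of $(X, d_{X,h})$. More precisely, I would produce a sequence of points $q_m \in C \cap X$ with $d_{X,e}(q_m, \partial X) \to 0$, hence by Lemma \ref{l1.4} with $d_{X,h}(q_m, x_0) \to \infty$, and along this sequence a whole arc of $C$ is swept out, so that $C$ meets $\gamma \cF$ for a number of $\gamma$ with $l(\gamma) \le n$ that grows at least linearly in $n$. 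That linear growth alone is not enough; the point is to upgrade it to exponential growth, and this is where the geometry of the curve near the boundary must be exploited.

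The key geometric input, which I expect to be the heart of the argument (and the part for which Mok's help is acknowledged), is the following: near a boundary point $\xi \in \partial X \cap \overline{C}$, the curve $C$, being algebraic, cannot be ``hyperbolically efficient'' — it cannot run off to the boundary along a single geodesic. Rather, when one writes the Bergman metric using the polynomial $Q(Z,\overline Z)$ from Lemma \ref{l1.2} (so $\omega = \iii \partial\overline\partial(-\log Q)$), the restriction of $\omega$ to $C$ near $\xi$ has a definite amount of hyperbolic area, growing like a positive power of $-\log d_{X,e}(\cdot, \partial X)$ as one shrinks a Euclidean neighbourhood of $\xi$. I would make this quantitative: show that $\mathrm{vol}_\omega\bigl(\{z \in C \cap X : d_{X,h}(z, x_0) \le n\}\bigr)$ grows at least like $e^{cn}$ for some $c>0$. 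The mechanism is that a one-dimensional algebraic subvariety through a boundary point, parametrised near $\xi$ by a Puiseux-type local uniformiser, has $Q$ vanishing to finite order along it, and the induced metric $-\partial\overline\partial \log Q|_C$ then behaves, in hyperbolic-distance coordinates, like the Poincaré metric on a punctured disc rescaled by a constant factor depending only on the vanishing orders; an algebraic curve occupies a cusp neighbourhood whose hyperbolic volume is infinite, and the volume inside the ball of radius $n$ is at least a fixed multiple of $e^{cn}$ because on a hyperbolic surface piece the area of a metric ball grows exponentially. The reduction to the polydisc via Lemma \ref{hyp-eucl} lets one do the computation in $\Delta^r$, where everything is explicit.

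Granting the volume lower bound $\mathrm{vol}_\omega(C \cap B_h(x_0, n)) \ge e^{c'n}$, I would conclude as follows. Each translate $\gamma\cF$ has $\omega$-volume at most $V := \mathrm{vol}_\omega(\cF) < \infty$ (finite because $\ol\cF$ is compact in $X$). If $\dim(\gamma\cF \cap C) < 1$ then $\gamma\cF \cap C$ has $\omega$-measure zero within $C$. Moreover, by the quasi-isometry (Proposition \ref{p1.1}) and the triangle inequality, $\gamma\cF \cap B_h(x_0, n) \ne \emptyset$ forces $l(\gamma) \le \lambda(n + C + \mathrm{diam}_h(\ol\cF)) + \lambda C =: n'$, where $n' = \Lambda n + O(1)$ for a constant $\Lambda$. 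Hence
$$
e^{c'n} \le \mathrm{vol}_\omega\bigl(C \cap B_h(x_0,n)\bigr) \le \sum_{\substack{\gamma \in \Gamma \\ \dim(\gamma\cF \cap C)=1 \\ l(\gamma) \le n'}} \mathrm{vol}_\omega(\gamma\cF) \le V \cdot N_C(n').
$$
Therefore $N_C(n') \ge V^{-1} e^{c'n}$, and writing $m = n'$ gives $N_C(m) \ge V^{-1} e^{(c'/\Lambda) m + O(1)} \ge e^{cm}$ for a suitable $c>0$ and all $m$ large, which is the assertion of Theorem \ref{t1.6}.

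The main obstacle, as indicated, is the quantitative near-boundary estimate: making precise and proving that an algebraic curve through a boundary point of a bounded symmetric domain sweeps out hyperbolic volume growing exponentially in the hyperbolic radius. This requires controlling how the defining polynomial $Q$ vanishes along $C$ near $\partial X$ — using the Lojasiewicz-type inequalities already invoked in Lemma \ref{hyp-eucl}, the polydisc reduction, and an explicit computation of $-\partial\overline\partial\log Q$ restricted to a local Puiseux parametrisation of $C$ — and then translating an area lower bound on a hyperbolic surface patch (punctured-disc cusp) into the exponential bound via the standard exponential volume growth of balls in negative curvature. All the remaining steps are soft packing/counting arguments built on Propositions \ref{p1.1} and Lemmas \ref{l1.2}, \ref{l1.4}.
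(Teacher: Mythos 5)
Your overall framework --- a volume/packing argument built on the quasi-isometry of Proposition \ref{p1.1}, the distance comparison of Lemma \ref{l1.4}, and the behaviour of the Bergman form on $C$ near $\partial X$ --- is the same as the paper's. However, there are two points where your sketch diverges from what actually works, one of them a genuine gap.

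The gap is in the packing step. You bound
$$
\mathrm{vol}_\omega\bigl(C\cap B_h(x_0,n)\bigr)\ \le\ \sum_{\gamma}\mathrm{vol}_\omega(\gamma\cF)\ \le\ V\cdot N_C(n')
$$
with $V:=\mathrm{vol}_\omega(\cF)<\infty$, invoking compactness of $\ol\cF$. This is dimensionally incoherent: the left-hand side is the $\omega$-\emph{area} of a complex curve (a real $2$-dimensional quantity), while $\mathrm{vol}_\omega(\cF)$ is the $\omega^N$-volume of a full-dimensional domain. What your argument actually needs is a \emph{uniform} upper bound on the area $\int_{\gamma\cF\cap C}\omega$ as $\gamma$ ranges over $\Gamma$, and this does not follow from $\mathrm{vol}_\omega(\cF)<\infty$: a curve can carry arbitrarily large area inside a fixed bounded open set. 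The paper's Lemma \ref{l1.7} supplies exactly this bound, and it is where the algebraicity of $C$ is used in an essential, quantitative way. One translates $\gamma\cF\cap C$ back to $\cF$ by the $\omega$-isometry $\gamma^{-1}$ (so the problem becomes bounding the area of $\gamma^{-1}C$ inside the fixed compact $\ol\cF$), then compares $\omega$ with the pull-back $\lambda^*\omega_{FS}$ of the Fubini--Study form of the compact dual $X_c$ on $\ol\cF$, and finally observes that $\int_{\gamma^{-1}\lambda(C)}\omega_{FS}=\deg_\cL(\lambda(C))$ is independent of $\gamma$ since $\gamma^{-1}$ acts on $X_c$ by projective automorphisms. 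Without a degree argument of this kind, the packing estimate does not close.

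The second, more minor, issue is the local picture near $\partial X$. You describe the induced metric on $C$ near a boundary point as ``the Poincar\'e metric on a punctured disc'' and speak of ``a cusp neighbourhood whose hyperbolic volume is infinite.'' A cusp in fact has \emph{finite} volume, and the volume of a ball of hyperbolic radius $n$ centred near a cusp stays bounded --- so taken literally this picture would fail to give the exponential growth you want. The correct statement, which the paper makes precise in Lemma \ref{l1.5}(a), is that the curve near $\partial X$ is parametrised by a sector $\Delta_{\alpha,\beta}$ of the Poincar\'e \emph{disc} (approaching the boundary circle, not a puncture), and after pulling back by $\psi$ one has $\psi^*\omega=s\,\omega_\Delta+\eta$ with $s\in\ZZ_{>0}$ and $\eta$ smooth up to $C_{\alpha,\beta}$; this comes from writing $Q(\psi(z),\overline{\psi(z)})=(1-\bar z z)^s Q_1(z)$ with $Q_1$ nonvanishing near the boundary arc. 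It is the disc-boundary geometry (area of the annulus $e^{-(n+1)}\le 1-|z|\le e^{-n}$ growing like $e^n$), not the cusp geometry, that delivers the exponential lower bound. Your Lojasiewicz/Puiseux intuition is pointing at the right computation, but you should replace ``punctured disc/cusp'' by ``sector of the Poincar\'e disc'' throughout.
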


As $C$ is algebraic, $C$ is not contained in a compact subset of $\CC^{N}$.
Therefore, there exists $b\in C\cap \partial X$ such that for all neighbourhoods $V_{b}$
of $b$ in $\CC^{N}$, $V_{b}\cap C$ is not contained in $\overline{X}$.
As $\partial X$ is a real-analytic hypersurface of $\CC^{N}\simeq \RR^{2N}$, there exists 
a neighborhood $V_{b}$ of $b$ such that
 $$
 C\cap\partial X\cap V_{b}
 $$
is a real analytic curve. 

Let $\alpha$ and $\beta$ two real numbers such that
$0\leq \alpha<\beta\leq 2\pi$. Let $\Delta_{\alpha,\beta}$
be the subset of the unit disc $\Delta$ defined as
$$
\Delta_{\alpha,\beta}:=\{z=re^{i\theta}, 0\leq r<1, \alpha\leq \theta\leq \beta   \}.
$$
Let $\overline{\Delta}$ be the closure of $\Delta$ in $\CC$
and $C_{\alpha,\beta}$ the subset of $\partial \Delta$
$$
C_{\alpha,\beta}:=\{z=e^{i\theta},  \alpha\leq \theta\leq \beta   \}.
$$

We may find  $\alpha$ and $\beta$ with the previous properties and a real analytic
map
$$
\psi: \Delta_{\alpha,\beta}\rightarrow C\cap X
$$
such that $\psi$ extends to a real analytic map from a neighborhood 
of $\Delta_{\alpha,\beta}$ to $C$ such that $\psi(C_{\alpha,\beta})\subset C\cap \partial X$.

\begin{lem}\label{l1.5}
Let $K(Z,W)$ be the Bergman kernel on $X$ and
$$
\omega= \sqrt{-1}\partial \overline{\partial} \log K(Z,Z)
$$
be the associated K\"ahler form.
Let $\omega_{\Delta}=\sqrt{-1}\frac{dz\wedge d\overline{z}}{(1-\vert z\vert^{2})^{2}}$
be the K\"ahler form on $\Delta$ associated to the Poincar\'e metric.

(a) There exists a positive integer
$s$ such that  up to changing $\alpha$ and $\beta$
$$
\psi^{*}\omega=s\omega_{\Delta}+ \eta
$$
for a $(1,1)$-form $\eta$ smooth in a neighbourhood of $C_{\alpha,\beta}$.

(b) Let $d_{e,\Delta}(\ ,\ )$ be the Euclidean distance on $\Delta$.
Changing $\alpha$ and $\beta$ if necessary, there exists
 $\lambda'>0$ and $C'>0$ such that for all $z\in \Delta_{\alpha,\beta}$
$$
\vert \log d_{e}(\psi(z),\partial X)-\lambda' \log d_{\Delta,e}(z,\partial \Delta)\vert\leq C'. 
$$
 
\end{lem}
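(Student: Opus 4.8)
The plan is to analyze the behavior of the real analytic map $\psi$ near the boundary arc $C_{\alpha,\beta}$ and to compare the pullback of the Bergman data with the model polydisc situation.

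For part (a), I would start from Lemma \ref{l1.2}, which gives $K(Z,Z) = 1/Q(Z,\overline{Z})$ with $Q$ a polynomial vanishing on $\partial X$. Hence $\psi^{*}\omega = -\sqrt{-1}\partial\overline{\partial}\log Q(\psi(z),\overline{\psi(z)})$. Since $\psi$ extends real analytically to a neighbourhood of $\Delta_{\alpha,\beta}$ and $\psi(C_{\alpha,\beta})\subset\partial X$, the real analytic function $z\mapsto Q(\psi(z),\overline{\psi(z)})$ vanishes on the arc $C_{\alpha,\beta}$. After possibly shrinking the arc so that $\psi$ is an immersion there and the vanishing is ``generic'', one writes $Q(\psi(z),\overline{\psi(z)}) = (1-|z|^{2})^{s}\, u(z,\overline z)$ where $s\ge 1$ is the order of vanishing along the boundary circle and $u$ is real analytic and non-vanishing in a neighbourhood of $C_{\alpha,\beta}$ (here one uses that $(1-|z|^2)$ is, up to a non-vanishing analytic factor, a defining function for $\partial\Delta$, and that along a small enough arc the order of vanishing of the analytic function is constant). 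Taking $-\sqrt{-1}\partial\overline{\partial}\log$ of both sides gives $\psi^{*}\omega = s\,\omega_{\Delta} + \eta$ with $\eta = -\sqrt{-1}\partial\overline{\partial}\log u$ smooth near $C_{\alpha,\beta}$, which is exactly (a). The main subtlety is justifying that the order of vanishing is constant along the arc and that the quotient $u$ is genuinely smooth and non-zero up to the boundary; this is where one invokes real analyticity of $\psi$ and the freedom to shrink $(\alpha,\beta)$.

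For part (b), I would again use $Q(\psi(z),\overline{\psi(z)}) = (1-|z|^{2})^{s}u(z,\overline z)$, so that $\log Q(\psi(z),\overline{\psi(z)}) = s\log(1-|z|^{2}) + \log u$, with $\log u$ bounded near $C_{\alpha,\beta}$. On the other hand, by Lemma \ref{l1.4} (or rather by the estimates feeding into it) the quantity $-\log d_{X,e}(w,\partial X)$ is comparable, up to bounded error, to $-\log Q(w,\overline w)$ for $w\in X$ near $\partial X$: indeed $Q$ vanishes precisely on $\partial X$ and, $\partial X$ being a compact real analytic hypersurface, the Lojasiewicz inequality gives $c_{1}d_{X,e}(w,\partial X)^{m}\le Q(w,\overline w)\le c_{2}d_{X,e}(w,\partial X)$ locally, hence $\log Q(w,\overline w)$ and $\log d_{X,e}(w,\partial X)$ differ by a bounded multiple plus a bounded additive constant. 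Similarly $\log(1-|z|^{2})$ and $\log d_{\Delta,e}(z,\partial\Delta)$ differ by a bounded amount. Combining these three comparisons yields constants $\lambda'>0$, $C'>0$ with $|\log d_{e}(\psi(z),\partial X) - \lambda'\log d_{\Delta,e}(z,\partial\Delta)|\le C'$ on $\Delta_{\alpha,\beta}$, after shrinking the arc once more if needed so that $\psi(z)$ stays in the neighbourhood of $\partial X$ where the Lojasiewicz estimate applies.

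The step I expect to be the genuine obstacle is the first one: producing the clean factorization $Q\circ(\psi,\overline\psi) = (1-|z|^{2})^{s}u$ with constant $s$ and non-vanishing analytic $u$ on a full (shrunken) neighbourhood of the arc. One must rule out that $Q\circ(\psi,\overline\psi)$ vanishes to higher order at isolated boundary points or along curves transverse to $\partial\Delta$, which is why the statement only claims the identity after ``changing $\alpha$ and $\beta$'' — restricting to a small subarc where $\psi$ is an embedding and the vanishing is as tame as possible. Once that normal form is in hand, both (a) and (b) are essentially formal manipulations together with Lojasiewicz, already used in the proof of Lemma \ref{hyp-eucl}.
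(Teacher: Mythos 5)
Your part (a) is essentially the paper's proof: starting from $K=1/Q$ and factoring the real analytic function $Q(\psi(z),\overline{\psi(z)})=(1-\overline z z)^{s}Q_{1}(z)$ with $Q_{1}$ a unit near $C_{\alpha,\beta}$ after shrinking the arc; the concern you flag about constancy of the vanishing order is precisely what the allowance ``changing $\alpha$ and $\beta$'' is for.

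Your part (b), however, has a genuine gap. You try to deduce the estimate by combining the factorization of $Q\circ\psi$ from part (a) with a claimed comparison $\lvert\log Q(w,\overline w)-\mu\log d_{X,e}(w,\partial X)\rvert\le C$ for $w$ near $\partial X$, justified by Lojasiewicz. But Lojasiewicz (together with the Lipschitz upper bound) only gives $c_{1}d_{X,e}(w,\partial X)^{m}\le Q(w,\overline w)\le c_{2}\,d_{X,e}(w,\partial X)$, i.e.\ $\log Q$ is sandwiched between $m\log d_{X,e}-O(1)$ and $\log d_{X,e}+O(1)$; there is no single exponent $\mu$ making the difference bounded, because the ratio $\log Q/\log d_{X,e}$ genuinely varies with the direction of approach to $\partial X$ (already in the bidisc, $Q=\prod_i(1-\rho_i^{2})^{p+q}$ vanishes to different orders relative to $d_{X,e}=\min_i(1-\rho_i)$ depending on how many coordinates approach the boundary). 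So the intermediate comparison you invoke is false, and the chain of three comparisons does not close.

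The paper avoids this entirely by treating (b) exactly like (a): since $\psi$ extends real analytically and $d_e(\,\cdot\,,\partial X)$ is real analytic near the (real analytic, compact) hypersurface $\partial X$, the function $z\mapsto d_e(\psi(z),\partial X)^{2}$ is real analytic near $C_{\alpha,\beta}$, positive on $\Delta_{\alpha,\beta}$, and vanishes on $C_{\alpha,\beta}$; hence it factors as $(1-\overline z z)^{2\lambda'}\psi_{1}(z)$ with $\psi_{1}$ a unit after shrinking the arc, and then $\log\psi_{1}$ is bounded, which gives exactly the displayed inequality (using also $\log(1-|z|^{2})=\log(1-|z|)+O(1)$). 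Your argument can be repaired by applying the very same factorization-along-the-curve device to $d_e(\psi(z),\partial X)^{2}$ instead of detouring through a global comparison of $Q$ and $d_{X,e}$.
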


Let $Q(Z,\overline{Z})$ be the polynomial defined in lemma \ref{l1.2}.
Then
$$
\psi^{*}\omega=-\sqrt{-1}\partial_{z}\partial_{\overline{z}} \log Q(\psi(z),\overline{\psi(z)}).
$$
For $z\in \Delta_{\alpha,\beta}$, $Q(\psi(z),\overline{\psi(z)})$
is a real analytic function  taking positive real values
on $\Delta_{\alpha,\beta}$ and vanishing identically
on 
$$
C_{\alpha,\beta}=\{z=re^{i\theta}\in \overline{\Delta},  1-\overline{z}z=0, \alpha\leq \theta\leq \beta\}.
$$
Therefore 
$$
Q(\psi(z),\overline{\psi(z)})=(1-\overline{z}z)^{s}Q_{1}(z)
$$
for a positive integer $s$ and a real analytic function $Q_{1}$ 
 taking positive real values
on $\Delta_{\alpha,\beta}$ and  non vanishing identically
on $C_{\alpha,\beta}$.  By changing $\alpha$ and $\beta$
we may assume that $Q_{1}$ doesn't vanish in a neighbourhood 
of $C_{\alpha,\beta}$. We then take
$\eta=-\sqrt{-1}\partial_{z}\partial_{\overline{z}} \log Q_{1}(z)$
and use the fact that
$\omega_{\Delta}=-\sqrt{-1}\partial_{z}\partial_{\overline{z}} \log (1-\overline{z}z)$
to conclude the proof of the part (a) of lemma.

The proof of (b) is similar. As $\psi$ is real analytic,
$d_{e}(\psi(z),\partial X)^{2}$ is a real analytic function
positive on $\Delta_{\alpha,\beta}$ and vanishing uniformly
on $C_{\alpha,\beta}$. Therefore
$$
d_{e}(\psi(z),\partial X)^{2}=(1-\overline{z}z)^{2\lambda'}\psi_{1}(z)
$$
for a real analytic function $\psi_{1}$
positive on $\Delta_{\alpha,\beta}$ which doesn't vanish uniformly
on $C_{\alpha,\beta}$. Changing $\alpha$ and $\beta$ if necessary
we may assume that $\psi_{1}$ doesn't vanish on $C_{\alpha,\beta}$.
Therefore $\log(\psi_{1}(z))$ is a bounded function on $\Delta_{\alpha,\beta}$.
This finishes the proof of part (b) of lemma.

\medskip

As a consequence of part (a) of the lemma we obtain the following result.

\begin{cor}\label{c1.6}
Let $\gamma\in \Gamma$ such that $\dim (\gamma.\cF\cap \psi (\Delta_{\alpha,\beta}))=1$.
Then
\begin{equation}
\int_{\gamma.\cF\cap C}\omega=s\int_{\psi^{-1}(\gamma.\cF\cap C)} \omega_{\Delta}+
\int_{\psi^{-1}(\gamma.\cF\cap C)} \eta.
\end{equation}
\end{cor}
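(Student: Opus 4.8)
The plan is to read off the identity from the change-of-variables formula for $2$-forms: one pulls the integral of $\omega$ over the relevant arc of $C$ back to the sector $\Delta_{\alpha,\beta}$ by the parametrisation $\psi$, and then inserts the decomposition $\psi^{*}\omega=s\,\omega_{\Delta}+\eta$ furnished by part (a) of Lemma \ref{l1.5}.

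First I would normalise $\psi$. Replacing $\alpha$ and $\beta$ by closer values (which is allowed, since the conclusion of Lemma \ref{l1.5} is itself stated ``up to changing $\alpha$ and $\beta$''), I may assume that $\psi$ is injective on $\Delta_{\alpha,\beta}$ and is an immersion there, avoiding the finitely many singular points of the integral algebraic curve $C$. Since $C$ is an algebraic, hence complex, curve and $\psi$ parametrises an open arc of it compatibly with the complex orientation — this compatibility being implicit in the positivity of the integer $s$ in Lemma \ref{l1.5}(a) — the restriction of $\psi$ is then an orientation-preserving real-analytic diffeomorphism of $\Delta_{\alpha,\beta}$ onto a locally closed real $2$-dimensional submanifold of $C\cap X$.

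Next, for a fixed $\gamma\in\Gamma$ with $\dim(\gamma.\cF\cap\psi(\Delta_{\alpha,\beta}))=1$, the set $\psi^{-1}(\gamma.\cF\cap C)$ is a relatively open subset of $\Delta_{\alpha,\beta}$, and $\psi$ maps it diffeomorphically onto $\gamma.\cF\cap C$ (for the $\gamma$ relevant to the counting, $\gamma.\cF$ sits near the boundary point $b$, so $\gamma.\cF\cap C$ is contained in $\psi(\Delta_{\alpha,\beta})$). Functoriality of the pullback of forms under this diffeomorphism then gives
$$
\int_{\gamma.\cF\cap C}\omega=\int_{\psi^{-1}(\gamma.\cF\cap C)}\psi^{*}\omega ,
$$
and substituting $\psi^{*}\omega=s\,\omega_{\Delta}+\eta$ together with linearity of the integral yields precisely
$$
\int_{\psi^{-1}(\gamma.\cF\cap C)}\psi^{*}\omega=s\int_{\psi^{-1}(\gamma.\cF\cap C)}\omega_{\Delta}+\int_{\psi^{-1}(\gamma.\cF\cap C)}\eta .
$$

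The only step needing genuine care is the normalisation of $\psi$ in the second paragraph: one must be sure that, after the permitted shrinking of the sector, $\psi$ identifies $\psi^{-1}(\gamma.\cF\cap C)$ with the whole arc over which one integrates, with multiplicity one and the correct orientation, so that the change-of-variables formula applies with no Jacobian factor and no sign. This is where the algebraicity of $C$ enters — its singular locus is finite, and $C$ near $b$ carries the analytic parametrisation $\psi$ — together with the orientation information already encoded in Lemma \ref{l1.5}(a). Granting this reduction, the corollary is nothing more than the functoriality of integration combined with that lemma.
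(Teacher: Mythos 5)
Your argument is correct and is exactly what the paper intends: the corollary is stated as an immediate consequence of Lemma \ref{l1.5}(a) with no written proof, and your change-of-variables computation $\int_{\gamma.\cF\cap C}\omega=\int_{\psi^{-1}(\gamma.\cF\cap C)}\psi^{*}\omega$ together with the decomposition $\psi^{*}\omega=s\,\omega_{\Delta}+\eta$ supplies precisely the implicit step. You are also right to flag the small set-theoretic point that $\gamma.\cF\cap C$ must here be understood as its part lying on the parametrised arc $\psi(\Delta_{\alpha,\beta})$ (so that $\psi$ identifies it bijectively with $\psi^{-1}(\gamma.\cF\cap C)$); the paper's notation glosses over this, but since the subsequent counting argument only uses the resulting one-sided bound via Lemma \ref{l1.7}, the abuse is harmless.
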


\begin{lem}\label{l1.7}
There exists a constant $B$ such that for all $\gamma\in \Gamma$ such that 
$\dim (\gamma.\cF\cap C)=1$,
\begin{equation}
\int_{\gamma.\cF\cap C}\omega \leq B.
\end{equation}
\end{lem}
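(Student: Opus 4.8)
The plan is to bound $\int_{\gamma.\cF\cap C}\omega$ by relating it to the volume form on $X$ and then using that $\ol\cF$ is relatively compact in $X$. First I would observe that since $\omega$ is the K\"ahler form of a metric on $X$, restricted to the curve $C$ it is a positive $(1,1)$-form wherever $C\cap X$ is smooth; the integral $\int_{\gamma.\cF\cap C}\omega$ is therefore the hyperbolic area (with respect to the induced metric) of the one-dimensional piece $\gamma.\cF\cap C$. By the invariance of $\omega$ under $\Gamma$ and the change of variable $\gamma^{-1}$, we have $\int_{\gamma.\cF\cap C}\omega=\int_{\cF\cap\gamma^{-1}C}\omega$, so it suffices to bound $\int_{\cF\cap C'}\omega$ uniformly over all algebraic curves $C'$ of the same degree as $C$ (the $\gamma^{-1}C$ all have the same degree as $C$).

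The key geometric input is that $\ol\cF$ is a \emph{compact} subset of $X$ (this is where cocompactness of $\Gamma$ is used). Hence there is an open relatively compact neighbourhood $U$ of $\ol\cF$ with $\ol U\subset X$. On $\ol U$ the coefficients of $\omega$, expressed in the ambient coordinates $Z=(z_1,\dots,z_N)$ of $\CC^N$, are smooth and bounded, say $\omega\leq M\,\omega_{\mathrm{eucl}}$ as $(1,1)$-forms on $\ol U$, where $\omega_{\mathrm{eucl}}=\sqrt{-1}\sum_j dz_j\wedge d\ol z_j$ is the standard Euclidean K\"ahler form on $\CC^N$. Therefore
$$
\int_{\gamma.\cF\cap C}\omega\leq M\int_{\cF\cap\gamma^{-1}C}\omega_{\mathrm{eucl}}\leq M\int_{U\cap\gamma^{-1}C}\omega_{\mathrm{eucl}}.
$$
The right-hand side is the Euclidean area of the part of the algebraic curve $\gamma^{-1}C$ lying in the fixed bounded open set $U$. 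Now I would invoke the standard fact that the Euclidean area of an affine algebraic curve of degree $d$ intersected with a ball of fixed radius $R$ is bounded by a constant depending only on $d$ and $R$ (this follows, for instance, from the Cauchy–Crofton / integral-geometry formula, or from Bézout: the number of intersection points of the curve with a generic affine line is at most $d$, and one integrates over the family of lines through $U$; alternatively it is Lelong's inequality comparing the area to $d$ times the area of a linear subspace). Since $\deg(\gamma^{-1}C)=\deg(C)=d$ is fixed and $U$ has fixed finite diameter, we obtain
$$
\int_{U\cap\gamma^{-1}C}\omega_{\mathrm{eucl}}\leq c(d,U),
$$
a constant independent of $\gamma$. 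Taking $B:=M\,c(d,U)$ finishes the proof.

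The main obstacle is making precise the uniform bound on the Euclidean area of $U\cap\gamma^{-1}C$ over all $\gamma\in\Gamma$. The point to be careful about is that the bound must be uniform in $\gamma$, which it is because all the curves $\gamma^{-1}C$ share the same degree and $U$ is a fixed bounded set; one should cite the appropriate integral-geometry or Lelong-type estimate rather than re-prove it. A minor technical point is that $\gamma.\cF\cap C$ may fail to be smooth at finitely many points (singular points of $C$, or points where $C$ is tangent to $\partial(\gamma.\cF)$), but these form a set of measure zero and do not affect the integral; it is also worth noting that we only need the inequality for those $\gamma$ with $\dim(\gamma.\cF\cap C)=1$, which is exactly the hypothesis of the lemma.
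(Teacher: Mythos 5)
Your proof takes a genuinely different route from the paper's. The paper compares $\omega$ on $\ol{\cF}$ to $\lambda^*\omega_{FS}$, where $\omega_{FS}$ is the Fubini--Study curvature form of the anticanonical bundle $\cL$ on the compact dual $X_c$ and $\lambda$ is the Harish--Chandra embedding; since $\omega_{FS}$ is $G(\CC)$-invariant, $\deg_{\cL}(\gamma^{-1}\lambda(C))=\deg_{\cL}(\lambda(C))$ exactly, and this degree bounds the remaining integral. You instead compare $\omega$ to the Euclidean form on a fixed compact neighbourhood of $\ol{\cF}$ and invoke a Lelong/Crofton-type area estimate for algebraic curves in a bounded region. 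Both arguments use the compactness of $\ol{\cF}$ in the same way; the difference lies in how the resulting affine integral is controlled.

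There is, however, a gap in your assertion that \emph{the $\gamma^{-1}C$ all have the same degree as $C$}. The action of $G(\RR)^+$ on the bounded realisation $\cD\subset\Fp^{+}\simeq\CC^N$ is by rational, not linear, transformations of $\CC^N$: already for $D^I_{p,q}$ the map $Z\mapsto(AZ+B)(CZ+D)^{-1}$ has degree $q>1$ as a rational self-map of $\PP^{pq}$, so the degree of the Zariski closure of $\gamma^{-1}C$ in $\PP^N$ will in general differ from $\deg(C)$. What actually saves your argument is that these degrees are \emph{uniformly} bounded: the rational self-map $z\mapsto\gamma^{-1}\cdot z$ is given by polynomial formulas whose degree $k$ depends only on the domain $X$ (not on $\gamma$), so $\deg(\gamma^{-1}C)\leq k\,\deg(C)$ for all $\gamma$, and the Lelong/Wirtinger bound $\mathrm{Area}(C'\cap B(p,R))\leq\pi R^{2}\deg(C')$ (valid for any centre $p$) then gives the required uniform estimate. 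The paper's passage to $X_c$ sidesteps this issue entirely, since there $G(\CC)$ acts by honest algebraic automorphisms preserving $\cL$ and $\deg_{\cL}$ is exactly $\gamma$-invariant.
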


\begin{proof}
Let $X_{c}$ be the compact dual of $X$. Then 
$X_{c}$ is a projective algebraic variety. Let $\cL$ be the 
dual of the canonical line bundle  endowed with the $G(\CC)$-invariant  metric $\Vert \ \Vert_{FS}$.
Then $\cL$ is ample.
Let $\omega_{FS}$ be the curvature form of
$(\cL,\Vert \ \Vert_{FS})$.

By the Harish-Chandra embedding theorem (\cite{Mo}, ch4.2, thm. 1)
there is a biholomorphism $\lambda$ from $\CC^{N}$ onto a dense open
subset of $X_{c}$. This $\lambda$ is in fact algebraic. To see this let's recall
its definition. We refer to \cite{Wo} p 281 for the following facts.
 There is a commutative subalgebra  $\Fm^{+}$ of $Lie(G_{\CC})$
consisting of nilpotent elements such that $\CC^{N}=\Fm^{+}$. 
Let $M^{+}=\exp (\Fm^{+})$. 
As $\Fm^{+}$ is a nilpotent Lie algebra, the group $M^+$ is algebraic 
(see for example \cite{MiMi}, Cor 4.8, p 276)
The map $\exp:\Fm^{+}\rightarrow M^{+}$
is therefore algebraic. Then $X_{c}=G_{\CC}/P$ for some parabolic subgroup
of $G_{\CC}$ and as $\lambda (m)=\exp(m).P$, the morphism $\lambda$
is algebraic.

We recall that the degree of a subvariety $V$ of 
dimension $d$ of $X_{c}$
with respect to $\cL$ can be computed
as $\deg_{\cL}(V)=\int_{V}\omega_{FS}^{d}$.
For a subvariety $V$ of $\CC^{N}$ we may define
$\deg_{\cL}(V)$ as the degree of the Zariski closure
in $X_{c}$ of $\lambda(V)$.

On the compact set $\ol{\cF}$,
$\omega$ and $\lambda^{*}\omega_{FS}$ are two smooth positive
$(1,1)$-forms. There exists therefore a constant $B_{1}$
such that 
$$
\omega\leq B_{1}\lambda^{*}\omega_{FS}
$$
on $\cF$.

Therefore
$$
\int_{\gamma.\cF\cap C}\omega=\int_{\gamma^{-1} (\gamma \cF\cap C)}\omega
\leq B_{1} \int_{\gamma^{-1} (\gamma \cF\cap C)}\lambda^{*}\omega_{FS}.
$$
Moreover,
$$
\int_{\gamma^{-1} (\gamma \cF\cap C)}\lambda^{*}\omega_{FS}\leq
\int_{\gamma^{-1}\lambda ( C)} \omega_{FS}=\deg_{\cL}(\gamma^{-1}.\lambda(C)))=
\deg_{\cL}(\lambda(C)).
$$

This finishes the proof of the lemma.
\end{proof}

\begin{lem}\label{l1.8}
There exist positive constants $\lambda_{1}$, $\lambda_{2}$ and $D$ such that
for all $z\in \Delta_{\alpha,\beta}$ such that $z\in \psi^{-1}(\gamma.\cF\cap C)$,
$$
\lambda_{1}l(\gamma)-D\leq -\log (1-\overline{z}z)\leq \lambda_{2}l(\gamma)+D
$$
\end{lem}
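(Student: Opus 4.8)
The plan is to combine the quasi-isometry of Proposition~\ref{p1.1} with the hyperbolic-versus-Euclidean distance estimates of Lemma~\ref{l1.4} and part~(b) of Lemma~\ref{l1.5}, transported to the disc $\Delta$ via the parametrisation $\psi$. Concretely, fix $z\in\Delta_{\alpha,\beta}$ with $\psi(z)\in\gamma.\cF\cap C$, and set $x:=\psi(z)\in X$. Since $x\in\gamma\ol{\cF}$, we may choose the map $r:X\to\Gamma$ of Proposition~\ref{p1.1} so that $r(x)=\gamma$; also $r(x_{0})=1$ (as $x_{0}\in\cF$). Proposition~\ref{p1.1} then gives, with constants $\lambda\geq1$, $C\geq0$,
\[
\frac{l(\gamma)}{\lambda}-C\leq d_{X,h}(x,x_{0})\leq\lambda\, l(\gamma)+C.
\]

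Next I would feed $x=\psi(z)$ into Lemma~\ref{l1.4}, which bounds $d_{X,h}(x,x_{0})$ above and below by (affine functions of) $-\log d_{X,e}(x,\partial X)$; note $d_{X,e}(\psi(z),\partial X)=d_{e}(\psi(z),\partial X)$ since $\psi(z)\in X$. Combining with the previous display yields positive constants $\mu_{1},\mu_{2},D'$ with
\[
\mu_{1}\,l(\gamma)-D'\leq-\log d_{e}(\psi(z),\partial X)\leq\mu_{2}\,l(\gamma)+D'.
\]
Finally I would invoke part~(b) of Lemma~\ref{l1.5}, which asserts $\bigl|\log d_{e}(\psi(z),\partial X)-\lambda'\log d_{\Delta,e}(z,\partial\Delta)\bigr|\leq C'$ for $z\in\Delta_{\alpha,\beta}$, and the elementary fact that for $z\in\Delta$ one has $1-\overline{z}z=(1-|z|)(1+|z|)$, so $-\log(1-\overline{z}z)$ and $-\log d_{\Delta,e}(z,\partial\Delta)=-\log(1-|z|)$ differ by a bounded amount (between $0$ and $\log 2$). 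Substituting these two boundedness statements into the displayed two-sided inequality and absorbing all additive constants and the positive factor $\lambda'$ into new constants $\lambda_{1},\lambda_{2},D$ gives exactly
\[
\lambda_{1}\,l(\gamma)-D\leq-\log(1-\overline{z}z)\leq\lambda_{2}\,l(\gamma)+D,
\]
as required.

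The one point that needs a little care — and which I expect to be the only real obstacle — is the compatibility of the choice of $r$ with $r(x)=\gamma$ and $r(x_{0})=1$ simultaneously: this is fine because the statement of Proposition~\ref{p1.1} permits \emph{any} section $r$ of $x\mapsto r(x)\ol{\cF}$, and $x\in\gamma\ol{\cF}$, $x_{0}\in\cF\subset\ol{\cF}=1\cdot\ol{\cF}$, so $\gamma$ and $1$ are legitimate values; the quasi-isometry constants $\lambda,C$ depend only on $(\Gamma,S_{\cF})$ and $\cF$, not on the chosen section. (Strictly, for a fixed global section one takes $r(x)$ to be \emph{some} element with $x\in r(x)\ol{\cF}$, and since $l(r(x),\gamma)$ is bounded by a constant whenever $x\in\gamma\ol{\cF}\cap r(x)\ol{\cF}\neq\emptyset$ — the two translates meet — one may equivalently work with $\gamma$ up to a bounded error, which is again absorbed into $D$.) Everything else is a routine chaining of affine inequalities, and all constants produced depend only on $(X,g)$, $x_{0}$, $\Gamma$, $\cF$, $C$, and the fixed $\alpha,\beta$.
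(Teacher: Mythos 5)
Your proposal is correct and follows exactly the argument the paper intends: the paper's proof of this lemma is the one-line statement that it is a combination of Proposition~\ref{p1.1}, Lemma~\ref{l1.4}, and Lemma~\ref{l1.5}(b), and your chain of inequalities is precisely the straightforward way to assemble these, together with the elementary observation that $-\log(1-\bar{z}z)$ and $-\log d_{\Delta,e}(z,\partial\Delta)=-\log(1-|z|)$ differ by at most $\log 2$. Your parenthetical remark about the choice of section $r$ in Proposition~\ref{p1.1} — namely that $l(r(x),\gamma)\le 1$ whenever $x\in\gamma\ol{\cF}\cap r(x)\ol{\cF}$ because $r(x)^{-1}\gamma\in S_{\cF}\cup\{1\}$, so any two sections differ by a bounded word-metric error — is the right way to make the quasi-isometry inequality applicable with $\gamma$ in place of $r(x)$ while keeping the constants independent of the choice.
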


\begin{proof} 
This
is a combination of proposition  \ref{p1.1}, lemma \ref{l1.4} and the part (b) of the
lemma \ref{l1.5}.
\end{proof}

We now prove theorem \ref{t1.6}.
Let $n$ be an integer and
$$
I_{n}:=\{z\in \Delta_{\alpha,\beta}, e^{-(n+1)}\leq 1-\vert z\vert\leq e^{-n}\}.
$$
Then there exists a $\delta_{1}>0$ such that for all $n$ big enough,
$$
\vol_{\Delta,h}(I_{n})=\int_{I_{n}}\sqrt{-1}\frac{dz\wedge d\overline{z}}{(1-\vert z\vert^{2})^{2}}\ge \delta_{1} e^{n}.
$$

By lemma \ref{l1.8}, there exist positive constants $c_{1}<c_{2}$ such that 
for all $n$ big enough and for all $z\in I_{n}$, $\psi(z)\in \gamma.\cF_{0}$ 
for some $\gamma$ such that
$$
c_{1}n\leq l(\gamma)\leq c_{2}n.
$$

Using corollary \ref{c1.6}
and lemma \ref{l1.7} we see that there exists a constant $B_{1}$
such that  for all $z\in \Delta_{\alpha,\beta}$
with $\psi(z)\in \gamma. \cF$ for some $\gamma\in \Gamma$
$$
\vol_{\Delta,h}(\psi^{-1}(\gamma.\cF\cap C))\leq B_{1}
$$

Combining these two results we see that there exists a positive constant $\delta$
such that for all $n$ big enough
$$
\sum_{c_{1}n\leq k\leq c_{2}n} N'_{C}(k)\ge \delta e^{n}.
$$
This finishes the proof of theorem \ref{t1.6}.

\begin{ques}\label{mainconj}
A natural extension of theorem \ref{t1.6} would be that
for all arithmetic lattices $\Gamma$ of $X$ (a priori not cocompact)
and for any algebraic curve $C$ in $\CC^{n}$ such that 
$C\cap X\neq \emptyset$,
$$
N_{C}(n)\ge e^{cn}
$$
for some positive constant $c$. We have not been able
to prove this  but we believe
that this statement could be true.  
\end{ques}

\section{o-minimality.}\label{ominimal}

\subsection{Preliminaries}

In this section we briefly review the notions of o-minimal structures and their properties that we
will use later on. For details we refer to \cite{VDD}and \cite{S}  as well as  references therein.

\begin{defini}\label{def2.1}
We recall that a subset of $\RR^{n}$ is  semi-algebraic if it is a finite boolean combinations of sets
of the form 
$$
\{(x_{1},\dots,x_{n})\in \RR^{n},  P(x_{1},\dots,x_{n})\ge 0\}
$$
for some polynomial $P\in \RR[x_{1},\dots,x_{n}]$.
A subset of $\CC^{n}$ is   semi-algebraic if it is a semi-algebraic subset of $\RR^{2n}$ identified with $\CC^{n}$ via the real and imaginary parts of the
 coordinates. 
\end{defini}

\begin{defini}\label{def2.2}
A structure over $\RR$ is  a collection $\mathfrak{S}$ of subsets of $\RR^n$ (with $n\in \NN$) which contains all
semi-algebraic subsets and which is closed under Cartesian products, Boolean operations and 
coordinate projections. 
A structure is called o-minimal (standing for `order-minimal') if subsets of $\RR$ belonging to 
$\mathfrak{S}$ are finite  unions of points and intervals. 
Given a structure $\mathfrak{S}$, subsets of $\RR^n$ belonging to $\mathfrak{S}$ are called \emph{definable}.
If $A$ is a subset of $\RR^{n}$ and $B$ a subset of $\RR^{m}$, a function $f\colon A \lto B$
is called definable if its graph in $A\times B$ is definable.
\end{defini}

In this paper we consider the structure $\RR_{an,exp}$.
This structure contains the graph of $exp \colon \RR\lto \RR$ and `an' indicates that this structure contains
the graphs of all restricted analytic functions.
A restricted analytic function is a function $f \colon \RR^n \lto \RR$ such that 
$f(x) = 0$ for $x\notin [-1,1]^n$ and $f$ coincides on $[-1,1]^{n}$ with an analytic function  $\tilde{f}$
defined 
on a neighborhood $U$
of $[-1,1]^n$.
 It was proved by Van Den Dries and Miller \cite{VdM} that the structure $\RR_{an,exp}$ is
o-minimal. Throughout the paper, by definable we mean definable in the structure $\RR_{an,exp}$.

We will  use the Pila-Wilkie counting theorem and its refinement by Pila.
For a set $X\subset \RR^n$ which is definable in some o-minimal structure, we define
the algebraic part $X^{alg}$ of $X$ to be the union of all positive dimensional semi-algebraic subsets of $X$.

Pila ( \cite{Pi}, 3.4) gives the following definition:
\begin{defini}
A semialgebraic block of dimension $w$ in $\RR^{n}$ is a connected definable set $W\subset \RR^{n}$
of dimension $w$, regular at every point, such that there exists a semialgebraic set $A\subset \RR^{n}$
of dimension $w$, regular at every point with $W\subset A$.
\end{defini}

For $x=(x_1,\dots, x_n)\in \QQ^n$, we define the height of $x$ as
$$
H(x) = \max(H(x_1),\dots,H(x_n) )
$$
where for $a,b\in \ZZ\backslash\{0\}$ coprime
$H(\frac{a}{b})=\max(|a|,|b|)$,  and $H(0)=1$.

For a subset $Z$ of $\RR^{n}$ and a positive real number $t$, we
define the set
$$
\Theta(Z,t):=\{ x\in Z\cap \QQ^n, H(x)\leq t\}
$$
and the counting function
$$
N(Z,t) := | \Theta(Z,t)  |.
$$

The following theorem is the main result of Pila-Wilkie \cite{PW}
and its refinement by Pila (\cite{Pil}, thm. 3.6).

\begin{teo}\label{PW}
Let $X\subset \RR^n$ be a definable set in some o-minimal structure.
For any $\epsilon>0$, there exists a constant $C_{\epsilon}>0$ such that
$$
N(X\backslash X^{alg}, t) < C_{\epsilon}t^{\epsilon}
$$ 
and the set $\Theta(X,t)$ is contained in the union of at most 
$C_{\epsilon}t^{\epsilon}$ semialgebraic blocks contained in $X$.
\end{teo}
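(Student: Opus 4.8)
The statement is the Pila--Wilkie counting theorem together with Pila's refinement in terms of blocks, so I only sketch the strategy; the details occupy \cite{PW} and \cite{Pil}. The plan rests on two inputs: a \emph{$C^r$-reparametrization theorem} for definable sets, and the \emph{determinant method} of Bombieri and Pila. Throughout one works not with a single definable set but with a definable family $X=\bigcup_a X_a$, arranging that every constant produced below depends only on the family; this uniformity is what ultimately lets the argument close by induction on dimension.

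First I would establish the parametrization theorem: for each integer $r\ge 1$ there is a finite collection of definable maps $\phi_i\colon (0,1)^{k}\to X$, with $k=\dim X$, whose images cover $X$ and all of whose partial derivatives of order $\le r$ are bounded by $1$ in sup-norm, the number of maps and the bounds being uniform over a definable family. This is proved by induction on $k$: using o-minimal cell decomposition one reduces to the graph of a definable function on a cell, and then to a quantitative reparametrization of definable functions of one variable (a definable curve can, after finitely many definable substitutions, be made to have all derivatives up to order $r$ small), which one bootstraps to higher dimension coordinate by coordinate.

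Granting this, fix $\epsilon>0$ and one of the maps $\phi=\phi_i$. I would partition the cube $(0,1)^k$ into at most $t^{\kappa}$ congruent sub-boxes, where $\kappa=\kappa(k,r)\to 0$ as $r\to\infty$. On a single sub-box $B$, consider the rational points of height $\le t$ lying on $\phi(B)$. By the determinant method, if this set has more than a threshold number $D=D(n,k,d)$ of elements, then the interpolation determinant formed from all monomials in $n$ variables of degree $\le d$, evaluated at these points, vanishes: the $C^r$ bound on $\phi$ gives, via Taylor's formula, an upper bound for the absolute value of this determinant smaller than the reciprocal of the denominator it would have if nonzero. Hence those points lie on a single algebraic hypersurface, indeed on an algebraic set $A$ of degree $\le d$. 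Choosing $d$ large enough that $D$ is acceptable and then $r$ large enough that $\kappa\le\epsilon$, the rational points of $X$ of height $\le t$ that are \emph{not} captured by such auxiliary algebraic sets number at most $C_\epsilon t^{\epsilon}$.

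The main obstacle is the last step: organizing the points that \emph{do} lie on the auxiliary algebraic sets. The sets $A$ that arise can be taken to form a definable family, so one stratifies it by the dimension of $A\cap X$. When $\dim(A\cap X)<k$ one applies the theorem inductively to the definable set $A\cap X$, whose algebraic part lies in $X^{alg}$. When $\dim(A\cap X)=k$, o-minimality forces the relevant family of $k$-dimensional algebraic pieces contained in $X$ to be finite up to semialgebraic blocks: the parameter set is definable, hence breaks into finitely many connected components after a suitable regular stratification, and each such piece is a semialgebraic block contained in $X$, contributing to $X^{alg}$. Assembling the $O(t^{\epsilon})$ sub-boxes, the finitely many blocks contained in $X$, and the inductive contributions of the lower-dimensional $A\cap X$, one obtains $N(X\backslash X^{alg},t)<C_\epsilon t^{\epsilon}$ and, keeping track of where the points actually sit, the covering of $\Theta(X,t)$ by at most $C_\epsilon t^{\epsilon}$ semialgebraic blocks contained in $X$. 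The determinant method is a soft and classical input; the real work, and where all the o-minimality is used, is the reparametrization theorem and the family-level bookkeeping that makes the dimension induction legitimate.
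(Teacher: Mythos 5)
The paper does not prove this theorem; it is imported as a black box. Immediately before the statement the authors write that it ``is the main result of Pila-Wilkie \cite{PW} and its refinement by Pila (\cite{Pil}, thm. 3.6),'' and immediately after they note that extensions to number fields and to definable families exist but will not be used. There is therefore no internal proof to compare against.

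That said, your sketch is a fair and accurate outline of the argument in the cited references. The two pillars you isolate are the right ones: the $C^r$-reparametrization theorem for definable sets (the o-minimal extension of the Yomdin--Gromov theorem, proved in \cite{PW} by induction on dimension via cell decomposition and a one-variable reparametrization lemma, with uniformity over definable families built in from the start), and the Bombieri--Pila determinant method applied on a subdivision of the unit cube into roughly $t^{\kappa(k,r)}$ congruent boxes, where the $C^r$ bounds make the interpolation determinant smaller than the minimal nonzero value it could take, forcing the rational points in each box onto a single hypersurface of controlled degree. The dimension induction over the definable family of intersections $A\cap X$, and the role of o-minimality in reducing the full-dimensional algebraic slices to finitely many connected regular pieces (blocks), is exactly how Pila's Theorem 3.6 sharpens the original Pila--Wilkie statement into the block-covering form used here. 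Two small points worth keeping in mind if you were to flesh this out: the reparametrizing maps are usually taken with target $[-1,1]^n$ after a definable rescaling rather than into $X$ directly, and the blocks in Pila's refinement are allowed to be zero-dimensional, so the first clause of the theorem (the bound on $N(X\setminus X^{alg},t)$) is literally the count of the $0$-dimensional blocks in the covering. Neither affects the correctness of your outline.
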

This theorem was extended by Pila to points with coordinates in arbitrary number fields.
A version for definable families is also available. 
However we will not use these
generalised versions of the theorem.

\section{The setup.}

In this section we set the relevant notations and prove some preliminary lemmas.

Let $(G,X)$ be a Shimura datum and $X^+$ a connected component of $X$.
We let $K$ be a compact open subgroup of $G(\AAA_f)$
 and $\Gamma := G(\QQ)_+\cap K$
where $G(\QQ)_+$ denotes the stabiliser in $G(\QQ)$ of $X^+$.
We let $S:=\Gamma\backslash X^+$. Then $S$ is  a connected component of 
the Shimura variety $\Sh_K(G,X):=G(\QQ)\backslash X\times G(\AAA_{f})/K$.
We will assume that 
$\Gamma$ is a cocompact lattice. In this situation $S$ is a projective variety.
We recall that this is the case if and only if $G$ is $\QQ$-anisotropic
(see \cite{Bo} thm. 8.4). 

Via a faithful rational representation, we view $G_{\QQ}$ as a closed subgroup of some
$\GL_{n,\QQ}$. We assume that $K$ is neat (see 
\cite{Pi} sec. 0.6)
and contained in $\GL_{n}(\widehat{\ZZ})$. With these assumptions
$\Gamma$ is contained in $\GL_{n}(\ZZ)$.

Using Deligne's interpretation of symmetric spaces in terms 
of Hodge theory we obtain variations of polarised $\QQ$-Hodge
structures on $X^{+}$ and $S$. We refer to (\cite{MoMo} sec. 2) for the definitions
of the Hodge locus on $X^{+}$ or $S$. An irreducible analytic subvariety $M$ of $X^{+}$
or $S$ is said to be Hodge generic if $M$ is not contained in the Hodge locus.
If $M$ is not assumed to be irreducible we say that $M$ is Hodge generic if
all the irreducible components of $M$ are Hodge generic.

\subsection{Definability of the uniformization map in the cocompact case.} \label{PS}

In this section we will for simplicity of notations write $X$ instead of $X^+$.

A realisation $\cX$ of $X$ is a real or complex analytic subset of a real or a complex algebraic
variety $\tilde{\cX}$ such that there is a transitive $G(\RR)^+$-action
 on $\cX$ such that for $x_0\in \cX$ the map
$$
\psi_{x_0}: G(\RR)^+\rightarrow \cX
$$
$$
g\mapsto g \cdot x_0
$$
is semi-algebraic and such that $\cX$ is a hermitian symmetric space associated to $G$. 
In particular $\cX$ is endowed with a $G(\RR)^{+}$-invariant complex structure and 
the action of $G(\RR)^{+}$ on $\cX$ is given by holomorphic maps (\cite{He} ch. 8-4 prop. 4.2). 
We will say that the realisation $\cX$ is real or complex if $\wt{\cX}$ is real or complex.

The realisation of Borel  $X_{B}$ of $X$ as a subset of the compact dual $X^{\vee}$ of  $X$
or the realisation $\mathcal{D}$ of $X$ as a bounded domain are complex realisations
in the previous sense. For $X_{B}$ we recall (\cite{Mo} 3.3 thm 1 p. 52) that there is an
algebraic subgroup $P_{\CC}$ of $G_{\CC}$ such that  $X^{\vee}=G(\CC)/P(\CC)$
and $X_{B}=G(\RR)^{+} \cdot P(\CC)$. The action of $G(\RR)^{+}$ on $X_{B}$ is the restriction
to $G(\RR)^{+}$ of the algebraic action of $G(\CC)$ on $X^{\vee}$ by left multiplication
and is therefore semi-algebraic. 

For details on the bounded realisation $\mathcal{D}$ of $X$ we refer to \cite{BB} 1.4.
Let $\Fg$ be the Lie algebra of $G(\RR)$ and $\Fg=\Fk\oplus \Fp$ be a Cartan decomposition.
Then we have the decompositions  $\Fg_{\CC}=\Fk_{\CC}\oplus \Fp_{\CC}$ and $\Fp_{\CC}=\Fp^{+}\oplus\Fp^{-}$
where  $\Fp^{+}$ and $\Fp^{-}$ are   commutative nilpotent
Lie algebras. Then  $\mathcal{D}$ is realised as a subset of $\Fp^{+}$.
Let $P^{+}$, $P^{-}$ and $K_{\CC}$ be the algebraic subgroups of $G(\CC)$ corresponding
to $\Fp^{+}$, $\Fp^{-}$ and $\Fk_{\CC}$ respectively. 
Then there is a Zariski open subset $\Omega$ of
$G(\CC)$ containing $G(\RR)^{+}$ such that $\Omega=P^{+}K_{\CC}P^{-}$.  
We usualy write $g=g^{+}kg^{-}$ the associated decomposition of 
an element of $\Omega$.
The map 
$$
\zeta: \Omega\rightarrow \Fp^{+}
$$
$g=g^{+}kg^{-}\mapsto \log(g^{+})$ is algebraic as $\Fp^{+}$
is nilpotent.
The action of $G(\RR)^{+}$ on $\mathcal{D}\subset \Fp^{+}$ is given
by 
$$
g.p=\zeta (g\cdot\exp(p))
$$
 and is therefore semi-algebraic as again $\Fp^{+}$ is nilpotent.
 This shows that  $\mathcal{D}$ is a realisation of $\cX$ in the previous sense. 

For the classical bounded domains  an explicit description is given in \cite{Mo} ch.4. 
As an example let us mention (using the the notations of loc. cit.)
$$
D^{I}_{p,q}:=\{ Z\in M_{p,q}(\CC)\simeq \CC^{pq}: I_{q}-\  ^{t}\overline{Z}Z>0\}
$$
which can be seen to be semialgebraic by a direct use of the definition
\ref{def2.1}. The associated group is $\rm{Su}(p,q)$ and the semi-algebraic action of a matrix
$\begin{pmatrix}
A&B\cr C&D
\end{pmatrix}$ of ${\rm Su}(p,q)$ on $D^{I}_{p,q}$ is given by
 $$
\begin{pmatrix}
A&B\cr C&D
\end{pmatrix}.Z=(AZ+B)(CZ+D)^{-1}
$$

The other cases are similar. For explicit descriptions of the two exceptional bounded symmetric
domains we refer to \cite{Xu}.

The theory of Cayley transforms
(\cite{BB} 1.6) shows that the unbounded realisations of $X$ in $\Fp^{+}$ including the classical 
Siegel domains of type I, II and III are complex realisations as defined above.

We recall that we fixed  a faithful representation $G\hookrightarrow \GL_n$ over $\QQ$.
Let $x_0\in X$ and $K_{x_{0}}$ be the stabiliser of $x_0$ in $G(\RR)$.
This induces an identification of $X$ with $G(\RR)/K_{x_{0}}$. Let $K_{\infty}$
be a maximal compact subgroup of $\GL_n(\RR)$ containing $K_{x_{0}}$.
Notice that $\GL_n(\RR)/K_{\infty}$ can be identified with the set $P_{n}$ of positive definite symmetric matrices and is
therefore a semi-algebraic set. Let $S_{n}$ be the algebraic set of symmetric matrices.

The inclusion $X \hookrightarrow \GL_n(\RR)/K_{\infty}$
is a real realisation of  $X$ as a subset $X_{ps}$ ({\it ps} stands for 'positive symmetric') of $\GL_n(\RR)/K_{\infty}=P_{n}\subset S_{n}$ as the map
$$
\phi_{x_{0}}: \GL_{n}(\RR)\rightarrow P_{n}
$$

$$g\mapsto g\cdot x_{0}=g^{t}g$$
is semi-algebraic
and  $X_{ps}=\phi_{x_{0}}(G(\RR))$.

Let $\cX_1$ and $\cX_2$ be two realisations of $X$.
Then an isomorphism of realisations is an analytic
$G(\RR)^+$-equivariant  analytic  diffeomorphism
$\psi: \cX_1\rightarrow \cX_2$. Two realisations of $X$
are always isomorphic as they are isomorphic to $X$.

\begin{prop}\label{prop2.2}
Let $\cX$ be a realisation of $X$. Then $\cX$ is semi-algebraic.
The map 
$$
\phi: G\times X\longrightarrow X
$$
given by $ \phi(g,x)=g.x$ is semi-algebraic.
Let $\psi:\cX_1\rightarrow \cX_2$ be an isomorphism of 
realisations of $X$. Then $\psi$ is a semi-algebraic map.
\end{prop}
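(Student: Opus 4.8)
The plan is to reduce everything to the real realisation $X_{ps}\subset S_n$, for which semi-algebraicity is essentially built into the definition, and to propagate semi-algebraicity along isomorphisms of realisations. First I would observe that $X_{ps}=\phi_{x_0}(G(\RR))$ is the image of the semi-algebraic set $G(\RR)$ (a real algebraic group inside $\GL_n(\RR)$) under the polynomial map $g\mapsto g^t g$; since the image of a semi-algebraic set under a semi-algebraic map is semi-algebraic (Tarski--Seidenberg), $X_{ps}$ is a semi-algebraic subset of $S_n$. The same argument applied to $\phi\colon \GL_n(\RR)\times S_n\to S_n$, $(g,M)\mapsto g M g^t$, shows that the action $G(\RR)\times X_{ps}\to X_{ps}$ is semi-algebraic, because it is the restriction of a polynomial map to a semi-algebraic set. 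This handles the second assertion for the specific realisation $X_{ps}$.

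Next I would treat the transfer between realisations. Given two realisations $\cX_1,\cX_2$ of $X$ and the isomorphism $\psi\colon \cX_1\to\cX_2$, fix base points $x_i\in\cX_i$ with $\psi(x_1)=x_2$ (we may arrange this after composing with an element of $G(\RR)^+$, whose action is semi-algebraic by hypothesis on a realisation), and let $K_{x_1}$ be the common stabiliser. By definition of a realisation, the orbit maps $\psi_{x_i}\colon G(\RR)^+\to\cX_i$ are semi-algebraic and surjective; the key point is that $\psi$ is determined by $G(\RR)^+$-equivariance, namely $\psi(g\cdot x_1)=g\cdot x_2$, so the graph of $\psi$ is
$$
\{(p,q)\in\cX_1\times\cX_2:\ \exists\,g\in G(\RR)^+,\ \psi_{x_1}(g)=p\text{ and }\psi_{x_2}(g)=q\},
$$
which is the projection of a semi-algebraic subset of $G(\RR)^+\times\cX_1\times\cX_2$ and hence semi-algebraic. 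The same description, with $\cX_2$ replaced by $X_{ps}$, shows that any realisation $\cX$ is semi-algebraic: its underlying set is $\psi_{x_0}(G(\RR)^+)$, the image of a semi-algebraic set under the semi-algebraic map $\psi_{x_0}$. One should also check that being a realisation forces the ambient analytic set to be cut out inside $\widetilde{\cX}$ by the closure of this image, but since we only need semi-algebraicity of the subset, the orbit-image description suffices.

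For the semi-algebraicity of $\phi\colon G\times X\to X$ in an arbitrary realisation $\cX$, I would write $\phi$ through the orbit parametrisation: the map $(g,\psi_{x_0}(h))\mapsto \psi_{x_0}(gh)$ has graph equal to the projection of $\{(g,h,h')\in G(\RR)\times G(\RR)^+\times G(\RR)^+: gh=h'\text{ in }G\}$ pushed forward by $(\mathrm{id},\psi_{x_0},\psi_{x_0})$, again semi-algebraic by Tarski--Seidenberg and the semi-algebraicity of $\psi_{x_0}$ and of multiplication in the algebraic group $G$. The main obstacle I anticipate is a bookkeeping one rather than a conceptual one: one must be careful that $\psi_{x_0}$ is a semi-algebraic \emph{map} (its graph is semi-algebraic) and not merely that its image is semi-algebraic, and that the fibres $K_{x_0}$ are themselves semi-algebraic so that the quotient description $X\cong G(\RR)^+/K_{x_0}$ is compatible with the semi-algebraic structure; these follow because $K_{x_0}$ is a real algebraic subgroup of $G(\RR)$. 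The only genuinely delicate point is ensuring that the complex realisations $X_B$ and $\mathcal D$, which live a priori in complex algebraic varieties, are semi-algebraic as subsets of the underlying real algebraic varieties — but this is exactly what the explicit descriptions recalled before the proposition (the Bruhat-type decomposition $\Omega=P^+K_\CC P^-$ and the algebraicity of $\zeta$ and of $\exp$ on nilpotent $\Fp^\pm$) provide, so I would simply invoke those computations.
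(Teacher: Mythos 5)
Your argument is correct and follows essentially the same route as the paper: both use the semi-algebraicity of the orbit map $g\mapsto g\cdot x_0$ (built into the definition of a realisation) to express $\cX$ as $\psi_{x_0}(G(\RR)^+)$, and then describe the graphs of the action and of an isomorphism of realisations as Tarski--Seidenberg projections of semi-algebraic subsets of $G(\RR)^+\times\cX_1\times\cX_2$ (resp.\ $G\times G\times G$). The opening detour through $X_{ps}$ is harmless but unnecessary, since your subsequent general argument never actually reduces to it.
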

\begin{proof}
Fix $x_0\in \cX$, then $\cX$ is defined by
the formula
$$
\cX:=\{g.x_0, \ g\in G(\RR)^+\}
$$
for a semi-algebraic action of $G(\RR) ^+$ so is semi-algebraic.

Let $Gr(\phi)\subset G\times \cX\times \cX$ be the graph of $\phi$.
Let $$\phi_{1}:G\times G\rightarrow G\times G\times G$$
be the map $\phi_{1}(g,\alpha)=(g,\alpha, g\alpha)$. Then 
$\phi_{1}$ is semi-algebraic as $G$ is an algebraic group.

Let $$\phi_{2}: G\times G\times G\rightarrow G\times \cX\times \cX$$
be the map $\phi_{2}(h_{1},h_{2},h_{3})=(h_{1},h_{2}.x_{0},h_{3}.x_{0})$.
The maps  $\phi_{2}$ is semi-algebraic by our definition
of a realisation. Therefore 
$$
Gr(\phi)=\phi_{2}\phi_{1}(G\times G)
$$ 
is semi-algebraic hence $\phi$ is semi-algebraic.

As $\psi$ is $G(\RR)^+ $-equivariant the graph $Gr(\psi)$ of $\psi$
is
$$
Gr(\psi):=\{(g.x_0,g. \psi(x_0))\in \cX_1\times \cX_2, \ g\in G(\RR)\}
$$ 
which is semi-algebraic as the actions of $G(\RR)^+$ on $\cX_1$
and $\cX_2$ are semi-algebraic.
\end{proof}

Let  $X^+$ and $\Gamma$ be as before, recall that $\Gamma$ is cocompact.
There are many choices for the fundamental domain in $X^+$ for the action of $\Gamma$. 
We recall that a fundamental set $\Omega$ in $X^{+}$ for the action of $\Gamma$ is by definition 
a subset of $X^{+}$ such that $\Gamma \Omega=X^{+}$ and such that 
$$
\{\gamma\in \Gamma, \gamma\Omega\cap\Omega\neq \emptyset \}
$$
is a finite set. A fundamental set for the action of $\Gamma$ on $G(\RR)$ is defined in the same way.
A fundamental domain in $X^{+}$ is as before an  open set  $\cF$ such that $\Gamma \overline{\cF}=X^{+}$ and such
that for all $\gamma\in \Gamma$ with $\gamma \neq 1$ we have $\cF\cap \gamma \cF=\emptyset$. 

For our purposes, we make the following choice.

\begin{prop} \label{fundomain}
Let $\cX$ be any realisation of $X^{+}$.
There exists a semi-algebraic fundamental set $\Omega$ in $\cX$ for the action of $\Gamma$ such that
$\overline{\Omega}$ is compact. There is a connected fundamental domain $\cF$ contained in  $\Omega$ which is
definable in $\RR_{an}$. 
\end{prop}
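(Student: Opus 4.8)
The plan is to produce the fundamental set $\Omega$ first and then carve out a fundamental domain $\cF$ inside it. Since all realisations of $X^+$ are semi-algebraically isomorphic by Proposition~\ref{prop2.2}, it suffices to construct $\Omega$ in one convenient realisation (I would use the bounded realisation $\mathcal{D}$, or equivalently work in the positive-symmetric realisation $X_{ps}$), and then transport it by the semi-algebraic isomorphism; the image of a semi-algebraic set with compact closure under a semi-algebraic homeomorphism is again semi-algebraic with compact closure. So the core of the argument is in a single fixed realisation.

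First I would produce the semi-algebraic fundamental set $\Omega$ with compact closure. Because $\Gamma$ is cocompact, $S = \Gamma\backslash X^+$ is compact; cover $S$ by finitely many evenly-covered coordinate balls and lift to get an open set $U \subset X^+$ with compact closure and $\Gamma U = X^+$. The issue is that $U$ need not be semi-algebraic, but we can shrink: pick a base point $x_0$ and for a large radius $R$ let $B_R$ be the (hyperbolic, equivalently by Lemma~\ref{l1.4} a semi-algebraic) metric ball of radius $R$ around $x_0$. For $R$ large enough $\Gamma B_R = X^+$ by cocompactness (the diameter of a fundamental domain is finite), and $\overline{B_R}$ is compact in $X^+$. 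That $B_R$ is semi-algebraic follows because the hyperbolic distance function on $\mathcal{D}$, or on $X_{ps}$, is (the restriction of) a semi-algebraic function: in $X_{ps}$ the distance is expressed through eigenvalues/traces of matrices, and on the bounded realisation through the Bergman kernel $K(Z,Z) = 1/Q(Z,\bar Z)$ of Lemma~\ref{l1.2} together with the polydisc comparison. Finiteness of $\{\gamma \in \Gamma : \gamma B_R \cap B_R \neq \emptyset\}$ is automatic from discreteness of $\Gamma$ acting properly on $X^+$ with $\overline{B_R}$ compact. Set $\Omega := B_R$.

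Next I would cut $\Omega$ down to an honest fundamental domain $\cF$. Enumerate $S_\Omega := \{\gamma_1,\dots,\gamma_k\} = \{\gamma \in \Gamma\setminus\{1\} : \gamma\Omega \cap \Omega \neq \emptyset\}$; this is finite. Using the hyperbolic distance, define the Dirichlet-type region
$$
\cF := \{x \in \Omega : d_{X,h}(x,x_0) < d_{X,h}(x,\gamma_i x_0)\ \text{ for } i=1,\dots,k\}.
$$
Standard arguments show $\cF$ is open, $\Gamma\overline{\cF} = X^+$ (any point of $\Omega$ can be moved by some $\gamma$ into the region where $x_0$ is the nearest $\Gamma$-translate of $x_0$, and that region is precisely $\cF$ after intersecting with $\Omega$), and $\cF \cap \gamma\cF = \emptyset$ for $\gamma \neq 1$ (strict inequalities). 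One must check connectedness: since $X^+$ is a convex domain in the sense of unique geodesics and $\cF$ is a finite intersection of geodesically convex half-spaces intersected with the convex ball $\Omega = B_R$, $\cF$ is geodesically convex, hence connected. For definability in $\RR_{an}$: the distance function $d_{X,h}$ on the compact closure $\overline{\Omega}$ is definable in $\RR_{an}$ — on the bounded realisation it is built from $\log Q(Z,\bar Z)$ and the integral/closed-form of the Poincaré metric via the polydisc theorem, all of which involve only restricted analytic functions once we stay inside a fixed compact subset of $X^+$ — so the defining inequalities for $\cF$ are definable, and $\cF$ is definable in $\RR_{an}$.

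The main obstacle I expect is the definability (and semi-algebraicity) of the relevant distance functions on the bounded realisation: one needs that $d_{X,h}(x, y)$, restricted to a compact subset of $X^+ \times X^+$, lies in $\RR_{an}$ (for $\cF$) and that the large metric ball $B_R$ is semi-algebraic (for $\Omega$). The cleanest route is probably to do the semi-algebraic part in the positive-symmetric realisation $X_{ps} \subset P_n$, where the symmetric-space distance has an explicit algebraic-over-$\RR$ description in terms of the matrices, and then transfer; and to establish $\RR_{an}$-definability by comparing, via the polydisc theorem (\cite{Mo} ch.5) and Lemma~\ref{hyp-eucl}, with the explicitly computable polydisc distance $\sum \log\frac{1+\rho_k}{1-\rho_k}$, which is manifestly definable in $\RR_{an}$ on any compact subset of the polydisc. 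Once the distance functions are known to be semi-algebraic, respectively $\RR_{an}$-definable, on the compact pieces in play, the rest of the argument is the routine Dirichlet-domain construction sketched above.
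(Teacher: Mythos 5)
Your construction of the fundamental domain $\cF$ as a Dirichlet region (intersection of hyperbolic half-spaces around $x_0$), the geodesic-convexity argument for connectedness, and the observation that $\RR_{an}$-definability follows from real-analyticity of the distance function on a compact set all match the paper's proof. The transfer between realisations via Proposition~\ref{prop2.2} is also the same. However, your construction of the semi-algebraic fundamental set $\Omega$ has a genuine gap.

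You take $\Omega = B_R$, a hyperbolic metric ball of large radius, and assert that $B_R$ is semi-algebraic because ``the distance is expressed through eigenvalues/traces of matrices'' (or via $Q(Z,\overline{Z})$ on the bounded model). This is false once the rank of $X^+$ is at least $2$. Already in $\Delta^2$ the ball of radius $R$ around the origin is
\[
\Bigl\{(\rho_1 e^{i\theta_1},\rho_2 e^{i\theta_2}) : u_1^2+u_2^2 < R^2,\ u_j=\log\tfrac{1+\rho_j}{1-\rho_j}\Bigr\},
\]
whose boundary is the real-analytic but non-algebraic curve $\operatorname{arctanh}(\rho_1)^2+\operatorname{arctanh}(\rho_2)^2 = R^2/4$; it is not semi-algebraic. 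The same issue occurs in $X_{ps}$: the distance from $I$ to $A$ is $\bigl(\sum_i(\log\lambda_i(A))^2\bigr)^{1/2}$, whose sublevel sets impose transcendental relations among the eigenvalues in rank $\geq 2$. What is true in rank $1$ (where hyperbolic balls are Euclidean balls, hence semi-algebraic) does not persist in higher rank, and it is precisely the higher-rank case where your choice of $\Omega$ breaks.

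The paper sidesteps this by invoking reduction theory: Theorem~4.8 of Platonov--Rapinchuk produces a fundamental set of the form $\phi_{x_0}\bigl((\cup_i b_i\Sigma)\cap G(\RR)\bigr)$ where $\Sigma$ is a (generalised) Siegel set. Siegel sets are cut out by inequalities on the Iwasawa components $a$ and $n$ of $g=kan$, and these components are semi-algebraic functions of $g$ (they come from the Cholesky decomposition of $g^tg$, which is algebraic up to taking square roots), so $\Omega$ is semi-algebraic by construction. Cocompactness then gives compactness of $\overline{\Omega}$, after which the Dirichlet-domain part of your argument goes through essentially unchanged. To repair your proof you would need to replace the metric-ball construction of $\Omega$ by a reduction-theoretic one (or by some other demonstrably semi-algebraic relatively compact fundamental set).
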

\begin{proof}

As two realisations of $X^{+}$ are always isomorphic the proposition is independent of the realisations
of $\cX$ by the proposition \ref{prop2.2}. We will use the realisation $X_{ps}$ of $X^{+}$.

By Theorem 4.8 of \cite{PR}, we can choose
$b_1,\dots , b_r \in \GL_n(\ZZ)$ and a semi-algebraic subset $\Sigma$ of $\GL_n(\RR)$ (as one can check using
it's definition p. 177 of \cite{PR}),
such that $\Sigma K_{\infty}=\Sigma$ and such
 that
$$
 \Omega:=\phi_{x_{0}} ((\cup_{i=1}^r b_i \Sigma )\cap G(\RR))
$$
is a fundamental set for $\Gamma$ acting on $X_{ps}$. 
Note that the choice of $a$ in the theorem 4.8 of \cite{PR} correspond to our
choice of $x_{0}$.
As $\Gamma$ is cocompact, $\overline{\Omega}$ is compact.

Let us now define
$$
\cF = \{x \in X_{ps}, \   \   d_{X_{ps},h}(x,x_0)<d_{X_{ps},h}(\gamma x, x_0), \forall \gamma\in \Gamma; \gamma\neq 1\}.
$$
One can show that $\cF$ is connected in the following way.
For all $\gamma\in \Gamma$ with $\gamma\neq 1$ we define 
$\cF_{\gamma}= \{x \in X_{ps}, \   \   d_{X_{ps},h}(x,x_0)<d_{X_{ps},h}(\gamma x, x_0)\}$.
Then as $x_{0}\in \cF_{\gamma}$ and $\cF=\cap_{\gamma\in \Gamma-\{1\}}\cF_{\gamma}$
 we just need to show that $\cF_{\gamma}$ is connected. A simple geometric argument
 shows that if $x\in \cF_{\gamma}$ then the geodesic arc joining $x_{0}$ to $x$
 is contained in $\cF_{\gamma}$. 

We may enlarge $\Sigma$ and choose the 
$b_{i}$ such that  $\cF\subset \Omega$. We assume therefore that 
$\cF\subset \Omega$.
The set $\cF$ is a fundamental domain for the action of $\Gamma$ on $X_{ps}$.
We claim that $\cF$ is  definable in $\RR_{an}$.
As $\Gamma$ is cocompact and $\Omega$ is a semi-algebraic set   containing $\cF$
we only need to check the inequalities for a finite number of elements $\gamma$.
The function $x \lto d_{X_{ps},h}(x,x_0)$ is a real analytic function,
 it follows that the set $\cF$ is definable in $\RR_{an}$.
\end{proof}

We'll need the following statement which is probably well-known to the experts:

\begin{prop}\label{p.4.1} Let $\cX$ be any realisation of the hermitian symmetric space $G(\RR)/K_{x_{0}}$.
Let 
$$
\pi_{1}:  \cX \longrightarrow S:=\Gamma\backslash \cX
$$
be  the uniformising map. Then there exists a definable 
fundamental relatively compact domain $\cF_{1}$ of $\cX$ (in $\RR_{an}$) such that 
  the restriction of $\pi_{1}$ 
to $\overline{\cF_{1}}$ is definable in $\RR_{an}$ (hence in particular in $\RR_{an,exp}$ ).
\end{prop}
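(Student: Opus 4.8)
The plan is to reduce to the concrete realisation $X_{ps}$ of $X^+$ inside the symmetric space $P_n$ of positive definite symmetric matrices, where we have an explicit description of a good fundamental domain, and then transport everything along a semi-algebraic isomorphism. First I would invoke Proposition \ref{prop2.2}: since any two realisations of $X^+$ are isomorphic via a semi-algebraic map, and semi-algebraic maps are in particular definable in $\RR_{an}$, it suffices to produce the desired $\cF_1$ in the single realisation $\cX = X_{ps}$. Composing a definable fundamental domain in $X_{ps}$ with the semi-algebraic isomorphism $\psi\colon X_{ps}\to\cX$ (which carries $\Gamma$-orbits to $\Gamma$-orbits since $\psi$ is $G(\RR)^+$-equivariant) yields a definable relatively compact fundamental domain in $\cX$, and the uniformising map intertwines correctly, so definability of its restriction is preserved.

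Next I would take for $\cF_1$ the fundamental domain $\cF$ constructed in Proposition \ref{fundomain}: the Dirichlet-type domain
$$
\cF=\{x\in X_{ps},\ d_{X_{ps},h}(x,x_0)<d_{X_{ps},h}(\gamma x,x_0)\ \text{for all}\ \gamma\in\Gamma,\ \gamma\neq 1\}.
$$
That proposition already gives that $\cF$ is connected, that $\overline{\cF}$ is compact (so in particular $\overline{\cF_1}$ is relatively compact), that $\cF$ sits inside a semi-algebraic fundamental set $\Omega$ with $\overline{\Omega}$ compact, and — crucially — that only finitely many $\gamma\in\Gamma$ can satisfy $\gamma\Omega\cap\Omega\neq\emptyset$, so that on $\Omega$ the defining condition for $\cF$ reduces to finitely many inequalities $d_{X_{ps},h}(x,x_0)<d_{X_{ps},h}(\gamma x,x_0)$. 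Since $x\mapsto g\cdot x$ is semi-algebraic for each fixed $g\in G(\RR)$ and $x\mapsto d_{X_{ps},h}(x,x_0)$ is real analytic on the relatively compact set $\overline{\Omega}$, hence definable in $\RR_{an}$ there, each such inequality defines an $\RR_{an}$-definable set; a finite intersection with the semi-algebraic $\Omega$ remains definable in $\RR_{an}$. This gives definability of $\cF$, and the same argument applied with $\le$ in place of $<$ gives definability of $\overline{\cF}$.

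It remains to show that $\pi_1|_{\overline{\cF_1}}$ is definable in $\RR_{an}$. Consider the set
$$
\mathcal{G}:=\{(x,\gamma\cdot x)\in \overline{\cF}\times X_{ps}:\ x\in\overline{\cF},\ \gamma\in\Gamma,\ \gamma\Omega\cap\Omega\neq\emptyset\}.
$$
Because $\overline{\cF}\subset\overline{\Omega}$ and $\overline{\Omega}$ meets only finitely many $\Gamma$-translates of $\Omega$, the union ranges over a \emph{finite} set of $\gamma$'s, and each graph $\{(x,\gamma\cdot x):x\in\overline{\cF}\}$ is semi-algebraic (the action map is semi-algebraic by Proposition \ref{prop2.2}, and $\overline{\cF}$ is $\RR_{an}$-definable); hence $\mathcal{G}$ is definable in $\RR_{an}$. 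Now $\mathcal{G}$ is precisely the graph, inside $\overline{\cF}\times(\Gamma\cdot\overline{\cF})$, of the equivalence relation $x\sim y \iff \pi_1(x)=\pi_1(y)$ restricted to $\overline{\cF}$, and since $S=\Gamma\backslash X_{ps}$ can be realised (as a definable set) as the quotient of the definable set $\overline{\cF}$ by this definable finite equivalence relation, the quotient map $\pi_1|_{\overline{\cF}}$ has definable graph. Concretely, fixing a definable section identifying $S$ with $\cF$ together with its finite gluing data, $\pi_1|_{\overline{\cF}}$ becomes a finite-to-one semi-algebraic-up-to-$\RR_{an}$-definable correspondence, hence definable; then transporting back along $\psi$ gives the statement for $\cX$. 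The main obstacle — and the point requiring care — is making rigorous the assertion that the image $S$, a priori only a complex analytic orbifold, carries a canonical definable structure in which $\pi_1|_{\overline{\cF_1}}$ is definable; this is handled precisely by the finiteness coming from cocompactness, which confines all the gluing to finitely many semi-algebraic translation maps restricted to the $\RR_{an}$-definable relatively compact set $\overline{\cF}$.
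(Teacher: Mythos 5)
Your reduction to a single concrete realisation via Proposition \ref{prop2.2}, your choice of $\cF_1$ as the Dirichlet domain from Proposition \ref{fundomain}, and your observation that the cocompactness confines the relevant $\Gamma$-translates to a finite collection are all correct and match the setup the paper uses. The genuine gap is in the final step, where you pass from ``the equivalence relation $x\sim y\iff\pi_1(x)=\pi_1(y)$ is $\RR_{an}$-definable on $\overline{\cF}$'' to ``$\pi_1|_{\overline{\cF}}$ has definable graph.'' The graph of $\pi_1|_{\overline{\cF}}$ lives in $\overline{\cF}\times S$, and for definability to mean anything $S$ must first be given a concrete definable structure inside some $\RR^m$. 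You propose to do this by ``fixing a definable section identifying $S$ with $\cF$ together with its finite gluing data,'' but this only equips $S$ with an \emph{artificial} definable structure coming from the fundamental domain; it does not establish that the natural structure on $S$ — as a projective algebraic variety — makes $\pi_1|_{\overline{\cF}}$ definable. Asserting that the bijection $\cF\to S$ is ``a definable section'' is precisely the content of the proposition, so your argument is circular at this point: the finiteness coming from cocompactness controls the gluing data but says nothing about how that gluing data compares with the algebraic structure on $S$.

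The missing ideas, which the paper supplies, are: (i) since $\Gamma$ is cocompact, $S=\Gamma\backslash\cX$ is a smooth projective variety, and by Baily--Borel there is an algebraic (hence semi-algebraic, hence $\RR_{an}$-definable) embedding $\Psi=(\psi_0,\dots,\psi_n)\colon S\to\PP^n_\CC$; and (ii) each coordinate $\psi_i\circ\pi$ is holomorphic on $\cX$, and the restriction of a holomorphic function to a compact semi-algebraic set such as $\overline{\Omega}$ is a restricted analytic function, hence $\RR_{an}$-definable. Combining (i) and (ii), $\Psi\circ\pi$ is $\RR_{an}$-definable on $\overline{\cF}$, and since $\Psi$ is an algebraic embedding one concludes that $\pi|_{\overline{\cF}}$ itself is definable with respect to the algebraic definable structure on $S$. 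This is the step that your equivalence-relation argument replaces without justification; without (i) and (ii), there is no bridge between the quotient construction and the target $S$ as an actual definable set.
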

\begin{proof}

Using proposition \ref{prop2.2} we see that the result is independent of the choice
of the realisation $\cX$. 

We prove the result for the bounded realisation $\mathcal{D}$. 
We let $\pi:\mathcal{D}\rightarrow S=\Gamma\backslash \mathcal{D}$ be the uniformising map. 
In our situation $S$ is a smooth  projective variety. By the main result of \cite{BB},
we have an algebraic embedding 
$$
\Psi=(\psi_{0},\psi_{1},\dots,\psi_{n}):S\longrightarrow \PP^{n}_{\CC}.
$$
As such $\Psi$ is definable in $\RR_{an}$.
 
 For all $0\leq i\leq n$ the map 
 $\psi_{i} \pi$ is holomorphic therefore its 
 restriction to the compact semi-algebraic set $\Omega$ (as in \ref{fundomain}) is definable in $\RR_{an}$.
As $\cF$ is a definable subset of $\Omega$,  $\psi_{i} \pi$ is definable when restricted to $\cF$.
As a consequence the restriction of $\Psi \pi$ to $\cF$ is definable in $\RR_{an}$
and we may conclude that the restriction of $\pi$ to $\cF$ is definable in $\RR_{an}$.

\end{proof}

\begin{rem}{\rm

The  generalisation of proposition \ref{p.4.1}
(definability in $\RR_{an,exp}$ instead of $\RR_{an}$)
for the moduli space $\cA_{g}$ of principally polarised
abelian varieties of dimension $g$ is given by Peterzil and Starchenko \cite{PS}.
We believe that the natural extension of their  results
to Shimura varieties of abelian type could be deduced
from the result for $\cA_{g}$. The exceptional Shimura varieties
would require new ideas.

}
\end{rem}

\subsection{Algebraic subsets of bounded symmetric domains.}\label{reduction}

We refer to section 2 of \cite{UY} for relevant definitions and properties of weakly special subvarieties. 
Let $\cX \subset \wt{\cX}$ be a complex realisation of $X^+$.

Let $\widehat{Y}$ be an algebraic subvariety of $\wt{\cX}$.  Proposition \ref{prop2.2} implies that
$\widehat{Y}\cap \cX$ is semi-algebraic (as a  variety over the reals) and complex analytic.
Using \cite{FL} (section 2), we see that $\widehat{Y}\cap \cX$ has only finitely many irreducible 
analytic components  and that these components are semi-algebraic.

 An irreducible algebraic subvariety of $\cX$ is then defined as an irreducible analytic  component of  
the intersection of $\cX$ with a closed algebraic subvariety of $\wt{\cX}$. An algebraic subvariety of $\cX$
 is defined to be a finite union of irreducible algebraic subvarieties 
of $\cX$.

Let $Z$ be a closed analytic subset of $\cX$.
We define the complex algebraic locus $Z^{ca}$ of $Z$  as the union of  the positive dimensional  algebraic subvarieties of $\cX$
which are contained in $Z$. We define the semi-algebraic locus $Z^{sa}$ of $Z$ as the union of the connected 
positive dimensional semi-algebraic sets which are contained in $Z$. Then by the previous discussion
$Z^{ca}\subset Z^{sa}$. Lemma 4.1 of \cite{PT} implies in fact that
\begin{equation}\label{eq300}
Z^{ca}= Z^{sa}.
\end{equation}

Let $V$ be an irreducible algebraic subvariety of $S$, let $\wt{V}$ be the preimage 
of $V$ in $\cX$. An irreducible algebraic subvariety  $Y$ of $\wt{V}$
is said to be maximal if $Y$ is maximal among irreducible algebraic subvarieties of $\wt{V}$.

Let $Y$ be a maximal irreducible algebraic subvarieties of $\wt{V}$.
We let $\cF$ be a fundamental set for the action of $\Gamma$ on $\cX$ as in section \ref{PS}
such that $\dim(Y\cap \cF) = \dim(Y)$. 
In our situation $\cF$ is an open subset of $\cX$ and by our assumption on $\Gamma$, the closure $\ol{\cF}$
is a compact subset of $\cX$.

We can now state the general hyperbolic Ax-Lindemann conjecture.
Using that isomorphisms of realisations are given by semi-algebraic maps (see \ref{prop2.2})
and equality \ref{eq300},
 we see  that this statement is independent of the complex realisation of $X^+$.

\begin{conj}
Let $S$ be a Shimura variety and $\cX$ a complex realisation of $X^+$. Let $\pi \colon \cX \lto S$
be the uniformisation map and let $V$ be an algebraic subvariety of $S$.
Maximal algebraic subvarieties of $\pi^{-1}V$ are weakly special subvarieties.
\end{conj}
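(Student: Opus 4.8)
The plan is to follow the strategy outlined in the introduction, isolating precisely the two places where compactness of $S$ intervenes and carrying the remaining, purely group-theoretic and Hodge-theoretic, steps out in full generality; the conjecture then follows in every case for which those two inputs are available (in particular it recovers Theorem \ref{main_thm} when $S$ is compact). \textbf{Step 1: reductions.} Exactly as in section \ref{reduction} one passes to $G$ semisimple of adjoint type and $\Gamma$ neat, so that $S = S_1 \times \cdots \times S_r$ splits along the $\QQ$-simple factors of $G$. Replacing $V$ by a Hodge generic component of a suitable intersection and letting $Y$ be a maximal algebraic subvariety of $\pi^{-1}(V)$, one reduces --- by descending through weakly special subvarieties and using that a product of weakly special subvarieties is weakly special --- to the statement that a Hodge generic $V$ whose projections to all the $S_i$ are positive-dimensional must equal $S$. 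None of this uses (non)compactness.

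\textbf{Step 2: the two analytic inputs.} The first is the curve-counting estimate of Theorem \ref{teo10}, namely $N_C(n) \ge e^{cn}$ for a curve $C$ meeting the chosen fundamental domain $\cF$; in the general setting this is exactly Question \ref{mainconj}. I would attack it by the scheme of section 2 --- compare the hyperbolic volume of the regions $I_n$, which grows like $e^n$, with the uniformly bounded integrals $\int_{\gamma \cF \cap C} \omega$ --- but now $\overline{\cF}$ is non-compact, so proposition \ref{p1.1} (the quasi-isometry of $(\Gamma,l)$ with $(X,d_{X,h})$) and lemma \ref{l1.7} (whose proof uses $\omega \le B_1 \lambda^* \omega_{FS}$ on the \emph{compact} $\overline{\cF}$) both fail; one must either restrict the class of curves $C$ (e.g.\ to those whose closure meets the boundary only transversally and away from the cusps) or work with a metric on $\Gamma$ adapted to the cusp geometry together with a uniformising chart that is still tame near infinity. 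The second input is definability: a semi-algebraic fundamental set with a definable connected fundamental domain $\cF$, and definability of $\pi|_{\overline{\cF}}$ in $\RR_{an,exp}$, as in Proposition \ref{p.4.1}. For Shimura varieties of abelian type this should be deduced from the theorem of Peterzil and Starchenko \cite{PS} on $\cA_g$; the exceptional cases appear to need a new argument.

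\textbf{Steps 3 and 4: stabiliser and Hecke induction.} Granting both inputs, form the definable set $\Sigma(Y) := \{ g \in G(\RR) : \dim(gY \cap \pi^{-1}(V) \cap \cF) = \dim(Y) \}$. Maximality of $Y$ forces every semialgebraic block through a point of $\Sigma(Y)$ to stabilise $Y$, so $\Sigma(Y)^{alg}$ lies in the stabiliser $\Theta_Y$ of $Y$ in $G(\RR)$. Choosing an algebraic curve $C$ inside a component of $\pi^{-1}(V)$ meeting $\cF$ and transverse to $Y$, Step 2 produces $\ge e^{cn}$ elements $\gamma \in \Gamma$ with $l(\gamma) \le n$ and $\gamma \cF \cap C \neq \emptyset$; by the elementary height estimates for $\Gamma \subset \GL_N(\ZZ)$ each such $\gamma$, after a bounded modification, gives a rational point of $\Sigma(Y)$ of height polynomial in $e^n$. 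Since $e^{cn}$ eventually dominates the Pila--Wilkie bound $C_\epsilon t^\epsilon$ at $t \sim e^n$ (Theorem \ref{PW}), infinitely many of these points lie in semialgebraic blocks inside $\Sigma(Y)$, hence in $\Sigma(Y)^{alg} \subset \Theta_Y$; a Noetherian and Zariski-density argument then yields a positive-dimensional $\QQ$-algebraic subgroup $H_Y \subset \Theta_Y$, non-unipotent because $Y$ is Hodge generic with positive-dimensional projections (section \ref{stab}, which needs no compactness). Finally, picking $\alpha \in H_Y(\QQ)$ non-central and non-unipotent, the Hecke correspondence $T_\alpha$ on $S$ has dense orbits and satisfies $\pi(Y) \subset T_\alpha(V) \cap V$; the Deligne--Andr\'e monodromy theorem for the variation of polarised Hodge structure on $\pi^{-1}(V)$ then prevents the Hodge generic $V$ from being contained in any proper subvariety cut out this way, and iterating over the factors forces $V = S$.

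\textbf{Main obstacle.} The crux is Step 2, and within it the non-cocompact analogue of Theorem \ref{teo10} (Question \ref{mainconj}): the argument of section 2 is, at two distinct points, a genuine consequence of $\overline{\cF}$ being compact, and either a new idea or an honest restriction on the curves $C$ allowed near the cusps seems unavoidable. The definability of the uniformisation for Shimura varieties not of abelian type is a second, independent obstruction.
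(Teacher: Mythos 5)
Your proposal follows exactly the route the paper itself takes: the paper does not prove this conjecture either, but only the compact case (Theorem \ref{main_thm}), and the remark immediately following the conjecture says precisely what you say, namely that the general case would essentially follow from a positive answer to Question \ref{mainconj} together with a definability statement for the uniformising map in the non-compact setting. Your identification of those two inputs, and your account of Steps 1, 3 and 4, match the paper's sections \ref{reduction}, \ref{stab} and the final Hecke-correspondence induction. So as a reconstruction of the strategy this is accurate; as a proof of the conjecture it is, as you say yourself, incomplete at Step 2 --- which is the honest state of affairs.

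One genuine inaccuracy: you assert that section \ref{stab} ``needs no compactness'' and that $H_Y$ is non-unipotent ``because $Y$ is Hodge generic with positive-dimensional projections.'' That is not the paper's argument and it does not work as stated. In the paper, reductivity of $H_Y$ is deduced from the fact that $G_\QQ$ is $\QQ$-anisotropic, hence $G(\QQ)$ contains no nontrivial unipotent elements --- and $\QQ$-anisotropy of $G^{\ad}$ is exactly the compactness hypothesis on $S$. So compactness enters a third time, in Theorem \ref{t.5.5}, not just in Theorem \ref{teo10} and in the definability of $\pi|_{\overline{\cF}}$. This matters downstream: Lemma \ref{1.6.1} builds the Hecke element $\alpha$ from a maximal $\QQ$-torus of $H_Y$, so if $H_Y$ could be unipotent (as is a priori possible when $S$ is non-compact, e.g.\ for $\cA_g$) the construction of $T_\alpha$ with dense orbits would break. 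Any genuine attack on the non-compact case must therefore also supply an argument excluding unipotent stabilisers, in addition to the two obstacles you list.
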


\begin{rem}{\rm

The Hyperbolic Ax-Lindemann conjecture
for a not necessary compact Shimura variety 
would essentially be a consequence of a positive answer
to the question \ref{mainconj} and a generalisation
of the definanability of the uniformisation map to non compact situations.

As noted before,  a proof of the Ax-Lindemann conjecture for $\cA_{g}$
has been announced by Pila and Tsimerman \cite{PT1}.
They provide a positive answer to a variant 
of the question \ref{mainconj}.

Some applications of the hyperbolic Ax-Lindemann conjecture
to the set of positive dimentional special subvarieties 
of subvarieties of Shimura varieties and an inconditional 
proof of the Andr\'e-Oort conjecture for subvarieties
of projective Shimura varieties in $\cA_{6}^{n}$ are given
in the recent preprint by the first named author in \cite{Ullmo}.

}\end{rem}

For the purposes of proving Theorem \ref{main_thm}, we can make the following simplifying assumptions.
\begin{lem} \label{reduction1}
Without loss of generality we can assume 
that $G$ is semisimple of adjoint type and that $Y$ (and hence $V$) is Hodge generic.
\end{lem}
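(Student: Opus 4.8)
The plan is to reduce to the adjoint semisimple case by standard manipulations with Shimura data, and then to reduce to the Hodge generic case by passing to the smallest special subvariety containing $Y$. First I would recall that for any Shimura datum $(G,X)$ there is an associated adjoint datum $(G^{\ad},X^{\ad})$ obtained by quotienting by the centre, and the natural morphism $\Sh_K(G,X)\to \Sh_{K^{\ad}}(G^{\ad},X^{\ad})$ is finite onto a union of connected components (after shrinking $K$). Since $X^+$ and the adjoint $X^{\ad,+}$ have the same universal cover, the uniformising map and the notion of algebraic subvariety of $\cX$ are unchanged under this replacement; moreover weakly special subvarieties of $S$ correspond to weakly special subvarieties of the adjoint quotient. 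Hence $Y$ is a maximal algebraic subvariety of $\pi^{-1}(V)$ if and only if its image is one in the adjoint setting, and it is a component of the preimage of a weakly special subvariety in one case iff it is in the other. This lets us assume $G = G^{\ad}$ is semisimple of adjoint type. (One should also note that replacing $\Gamma$ by a finite-index subgroup, which is harmless since $S$ is only changed by a finite covering and algebraicity/weak specialness are preserved under such coverings, allows us to assume $\Gamma$ is neat and contained in the adjoint group.)

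Next I would handle the Hodge generic reduction. If $Y$ is not Hodge generic, let $T$ be the smallest special subvariety of $S$ containing $\pi(Y)$ — equivalently, fix a connected component $\cX_T$ of the preimage of $T$ containing (a translate of) $Y$; by the theory recalled in \cite{EY}, $\cX_T$ is itself a bounded symmetric domain associated to a Shimura subdatum $(G_T, X_T)$ with $G_T \subset G$, the map $\pi$ restricts to the uniformising map $\cX_T \to T = \Gamma_T\backslash \cX_T$, and algebraic subvarieties of $\cX_T$ are exactly the algebraic subvarieties of $\cX$ that are contained in $\cX_T$ (this uses that $\cX_T$ is cut out in $\cX$ by linear equations in a suitable realisation, so being algebraic in $\wt{\cX_T}$ versus $\wt{\cX}$ is the same notion). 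Crucially, weakly special subvarieties of $T$ are precisely the weakly special subvarieties of $S$ contained in $T$. Then $Y \subset \pi^{-1}(V) \cap \cX_T = \pi|_{\cX_T}^{-1}(V\cap T)$, and $Y$ is maximal among algebraic subvarieties of this preimage (if it were contained in a strictly larger algebraic subvariety of $\pi^{-1}(V\cap T) \subset \pi^{-1}(V)$ this would contradict maximality of $Y$ in $\wt V$). By construction $Y$ is Hodge generic as a subvariety of $T$, since $T$ was chosen minimal. Thus, replacing $(G,X,S,V,Y)$ by $(G_T, X_T, T, V\cap T, Y)$, we may assume $Y$ — and a fortiori the smallest special subvariety $V$ containing it, which again we may take to be Hodge generic after the same replacement applied to $V$ — is Hodge generic. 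One verifies that proving Theorem \ref{main_thm} in this reduced situation yields it in general: a weakly special subvariety of $T$ is weakly special in $S$, so the conclusion transfers back.

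The two reductions combine: first pass to the adjoint datum, then cut down to the smallest special subvariety containing $Y$, which is again of adjoint type. The main obstacle — though it is really a matter of invoking the right structural results rather than a genuine difficulty — is checking compatibility of the notion of ``algebraic subvariety of $\cX$'' under these passages, namely that for a special subdatum $\cX_T \subset \cX$ the intersection of an algebraic subvariety of $\wt{\cX}$ with $\cX_T$ and the trace of an algebraic subvariety of $\wt{\cX_T}$ give the same collection; this follows from Proposition \ref{prop2.2} together with the fact that $\cX_T$ is itself a realisation (indeed, a totally geodesic semi-algebraic subset) of $X_T^+$, so that $\wt{\cX_T}$ may be taken to be an algebraic subvariety of $\wt{\cX}$. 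The remaining point, that maximality of $Y$ is preserved, is immediate since we only ever shrink the ambient preimage.
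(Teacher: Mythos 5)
Your approach follows the paper's very closely — pass to the adjoint datum via the finite morphism of Shimura varieties, and cut down to the smallest special subvariety containing $\pi(Y)$ to force Hodge-genericity. The extra care you take in verifying that ``algebraic subvariety of $\cX$'' is a notion compatible with passing to the realisation of a sub-Shimura-datum is a reasonable elaboration of something the paper treats briefly. However, you perform the two reductions in the opposite order from the paper and then assert that they commute: ``first pass to the adjoint datum, then cut down to the smallest special subvariety containing $Y$, which is again of adjoint type.'' This last claim is false. The group $G_T$ attached to a special subvariety $T$ of an adjoint Shimura variety is the generic Mumford--Tate group of $T$; this is a connected reductive $\QQ$-subgroup of $G$, but there is no reason for it to be semisimple, let alone adjoint (already a special point has a torus as its Mumford--Tate group). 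So after the cut-down you are no longer in the adjoint situation and would have to re-run the adjoint reduction, which in turn requires re-verifying that Hodge-genericity is preserved — i.e.\ the two steps do not simply ``combine.'' The paper avoids this circularity by doing the reductions in the other order: first replace $S$ by the smallest special subvariety containing $V$ (then by the smallest one containing $\pi(Y)$), so that $V$ and $Y$ become Hodge generic with respect to the new datum $(G_T, X_T)$; then pass to $(G_T^{\ad}, X_T^{\ad})$. That final step does preserve Hodge-genericity, because the generic Mumford--Tate group surjects onto its image in $G_T^{\ad}$, so if it was all of $G_T$ it remains all of $G_T^{\ad}$. Swap the order of your two reductions and drop the incorrect commutation remark, and your argument matches the paper's.
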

\begin{proof}
Let us first check that we can assume that $V$ and $Y$ are Hodge generic.
To assume that $V$ is Hodge generic, it suffices to replace $S$ by the smallest Shimura variety containing $V$.
This amounts to replacing $X^+$ by a certain symmetric subspace and hence does not alter the property of $Y$  being algebraic.
Suppose $\pi(Y)$ is contained in a smaller special subvariety $S'\subset S$. Then one can replace $S$ by $S'$ and $V$ by an irreducible component of $V\cap S'$
and hence assume that $Y$ is Hodge generic.
Notice that the conclusion of \ref{main_thm} is independent of the choice of the compact open subgroup $K$.
Indeed, replacing $K$ by a smaller subgroup does not change $Y$.

Let $G^{\ad}$ be the quotient of $G$ by its centre.
Let $\pi^{\ad}\colon G\lto G^{\ad}$ be the natural morphism.
Recall that $X^+$ is a connected component of the $G(\RR)$-conjugacy class of a 
morphism $x\colon \SSS \lto G_\RR$. Let $X^{\ad}$ be the $G^{\ad}(\RR)$-conjugacy class of
$\pi^{\ad}\circ x$. Then $(G^{\ad},X^{\ad})$ is a Shimura datum and 
$\pi^{\ad}$ induces an isomorphism between $X^+$ and $X^{+\ad}$, a connected component
of $X^{\ad}$. A choice of a compact open subgroup $K^{\ad }$ containing $\pi^{\ad}(K)$ induces
a finite morphism of Shimura varieties $\Sh_K(G,X)\lto \Sh_{K^{\ad}}(G^{\ad},X^{\ad}) $ and 
a subvariety is special if and only if its image is special.
It follows that we can replace $V, \wt{V},Y$ by their images and, respectively
$G,X,K$ by $G^{\ad},X^{\ad}, K^{\ad}$ and hence assume that $G$ is semisimple of adjoint type.
\end{proof}

Since the group $G$ is now assumed to be adjoint, we can write
$$
G = G_1 \times \cdots \times G_r
$$
where the $G_i$s are $\QQ$-simple factors of $G$.
The group $K$ is assumed to be neat and to be a product $K = \prod_p K_p$ where $K_p$s are compact open subgroups of $G(\QQ_p)$. Furthermore, each $K_p$ is assumed to be the product
$$
K_p = K_{p,1}\times \cdots \times K_{p,r}
$$
with $K_{p,i}$s compact open subgroups of $G_i(\QQ_p)$.
The group $K$ is a direct product $K = K_1 \times \cdots \times K_r$
where $K_i = \prod_p K_{i,p}$.
The group $\Gamma$ is a direct product
$$
\Gamma = \Gamma_1 \times \cdots \times \Gamma_r
$$
where $\Gamma_i = K_i \cap G_i(\QQ)^+$.

This induces a decomposition of $S$ as a direct product
$$
S = S_1 \times \cdots \times S_r
$$
where $S_i = \Gamma_i\backslash X_i^+$.

From now on we assume that $V$ and $Y$ are Hodge generic.
We now make the next simplifying assumption.

\begin{lem} \label{reduction2}
Without loss of generality, we may assume that $V$ is not of the form  $S_{1}\times \cdots \times S_t \times V_1$
where $t\geq 1$ and $V_1$ is a subvariety of  $S_{t+1}\times \cdots \times S_r$.

Without loss of generality, we may assume that $Y$ is  not of the form $\{x\}\times Y_1$
where $x$ is a point of a product $X_1^+\times \cdots \times X_t^+$ for $t\geq 1$.
\end{lem}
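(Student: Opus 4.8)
The plan is to handle the two statements separately, each by an inductive reduction on the number of simple factors $r$. For the first statement, suppose $V = S_1 \times \cdots \times S_t \times V_1$ with $t \geq 1$ and $V_1 \subsetneq S_{t+1} \times \cdots \times S_r$ (the case $V_1 = S_{t+1}\times\cdots\times S_r$ is $V = S$, already excluded or trivially handled since then $\pi^{-1}V = \cX$ and the conclusion is immediate). First I would write $\cX = \cX' \times \cX''$ where $\cX' = X_1^+ \times \cdots \times X_t^+$ and $\cX'' = X_{t+1}^+ \times \cdots \times X_r^+$, and correspondingly $\wt V = \cX' \times \wt{V_1}$. The key observation is that a maximal algebraic subvariety $Y$ of $\wt V$ must be of the form $\cX' \times Y_1$ for some maximal algebraic subvariety $Y_1$ of $\wt{V_1}$: indeed, the projection $p''(Y)$ of $Y$ to $\cX''$ is contained in $\wt{V_1}$, so $\cX' \times \ol{p''(Y)}^{\,\mathrm{alg}}$ (the algebraic hull) is an algebraic subvariety of $\wt V$ containing $Y$, whence by maximality $Y = \cX' \times Y_1$. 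One then checks, using the definition of weakly special subvarieties in \cite{UY} section 2, that $Y$ is a component of the preimage of a weakly special subvariety of $S$ if and only if $Y_1$ is a component of the preimage of a weakly special subvariety of $S_{t+1}\times\cdots\times S_r$; this uses that $\cX'$ is itself (a component of the preimage of) the weakly special subvariety $S_1\times\cdots\times S_t$, and that products of weakly special subvarieties are weakly special. So the statement of Theorem \ref{main_thm} for $(G, X, V, Y)$ follows from the statement for the Shimura variety attached to $G_{t+1}\times\cdots\times G_r$ with the subvariety $V_1$, which has strictly fewer simple factors.

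For the second statement, suppose $Y = \{x\} \times Y_1$ with $x$ a point of $\cX' = X_1^+ \times \cdots \times X_t^+$ for some $t \geq 1$. Here I would argue that this forces $\pi(Y)$ to lie in a proper weakly special subvariety, which after the reduction of Lemma \ref{reduction1} (replacing $S$ by the smallest special subvariety containing $\pi(Y)$ and keeping $Y$ Hodge generic) is a contradiction unless $t$ simple factors drop out entirely. More precisely: a single point $\{x\}$ in a hermitian symmetric domain is itself a weakly special subvariety (it is the orbit of the trivial subgroup, equivalently a zero-dimensional special-type subvariety in the terminology of \cite{UY}), so $\{x\} \times Y_1 \subset \cX$ projects into $S$ with image contained in $\{\pi'(x)\} \times S_{t+1}\times\cdots\times S_r$-type locus — but Hodge genericity of $Y$, together with the fact that each projection of $Y$ to the $S_i$ is assumed positive-dimensional in the setup preceding Lemma \ref{reduction1}, rules this out. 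Hence one may simply discard the factors $1, \dots, t$: replace $S$ by $S_{t+1}\times\cdots\times S_r$, $V$ by the corresponding slice, and $Y$ by $Y_1$. As with the first statement this strictly decreases $r$, so after finitely many steps neither degenerate form can occur, and we are reduced to the case where no such splitting exists.

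The routine parts are the bookkeeping with products of Shimura data and the verification that the ``algebraic hull'' of $p''(Y)$ inside $\cX''$ is again an irreducible algebraic subvariety in the sense of section \ref{reduction} — this follows from the finiteness of irreducible analytic components established via \cite{FL}, and from the fact that $\cX'$ is a full factor so that $\cX' \times (\text{alg. subvariety of }\cX'')$ is an algebraic subvariety of $\cX$. The main obstacle is making the compatibility of weakly special subvarieties with the product decomposition completely precise: one must be careful that a weakly special subvariety of a product $S_1\times\cdots\times S_r$ of adjoint Shimura varieties is precisely a product of weakly special subvarieties of the factors (this uses that $G$ is adjoint and $\QQ$-simple factors are genuinely independent), so that the induction on $r$ actually closes. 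This is where I would spend the most care, invoking the structure theory of weakly special subvarieties from \cite{UY}; everything else is formal.
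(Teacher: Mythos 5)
Your treatment of the first statement follows essentially the same route as the paper: decompose $\wt V = \cX' \times \wt{V_1}$, argue that a maximal algebraic $Y$ must be $\cX' \times Y_1$ with $Y_1$ maximal in $\wt{V_1}$, and reduce to the product of fewer simple factors. Your ``algebraic hull'' justification is a reasonable expansion of what the paper states without proof, though you should note that the inclusion of the hull of $p''(Y)$ into $\wt{V_1}$ is itself not automatic (it requires an analytic-continuation argument since $\wt{V_1}$ is analytic, not algebraic); this is a rough edge the paper shares.

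For the second statement, however, there is a genuine gap in the route you take. You attempt to derive a contradiction from Hodge genericity of $Y$ ``together with the fact that each projection of $Y$ to the $S_i$ is assumed positive-dimensional in the setup preceding Lemma~\ref{reduction1}.'' No such assumption is in force at that stage: positive-dimensionality of the projections of $Y$ is precisely what the second half of Lemma~\ref{reduction2} is there to secure, so invoking it is circular. Moreover, Hodge genericity only excludes containment in proper \emph{special} subvarieties; $\{\pi'(x)\}\times S_{t+1}\times\cdots\times S_r$ is weakly special but generally not special, so there is no contradiction to be had. What saves your argument is the final sentence, which abandons the contradiction framing and simply discards the first $t$ factors, replacing $V$ by its slice over $\pi'(x)$ and $Y$ by $Y_1$. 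That direct reduction is the paper's (one-line) argument: $\{x\}\times Y_1$ is a component of the preimage of a weakly special subvariety if and only if $Y_1$ is, so one passes to the smaller product. You should drop the contradiction attempt entirely and keep only the reduction, spelling out that the slice $V_{\pi'(x)} \subset S_{t+1}\times\cdots\times S_r$ is again algebraic and that $Y_1$ is maximal algebraic in $\pi''^{-1}(V_{\pi'(x)})$ by the maximality of $Y$.
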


\begin{proof}
Suppose that $V$ is of this form. Then $\wt{V}$ is of the form $X_1^+\times \cdots \times X_{t}^+ \times \wt{V}_1$. As $Y$ is a maximal algebraic subset of $\wt{V}$, $Y$ is of the form
 $Y=X^+_1\times \cdots \times X^+_t \times Y_1$ where $Y_1$ is a maximal algebraic subset of $V_1$.
Hence we are reduced to proving that $Y_1$ is weakly special.
Notice also that for similar reasons, we can assume that $V$ is not of the form $\{ x \} \times V_1$ where
$x$ is a point of a product of a certain number of $S_i$s.

As for the second claim,  if $Y$ is of this form, then proving that $Y$ is weakly special is equivalent to proving that 
$Y_1$ is weakly special.
\end{proof}

\begin{rem}
Note that with these assumptions we need in fact  to prove that $Y=X^{+}$.
 As $Y$ should be
both prespecial and Hodge generic, a simple argument shows that 
$Y$ should be of the form 
$$
Y=X_{1}^{+}\times \dots\times X_{t}^{+}\times \{x_{t+1}\}\times\dots\times \{x_{r}\} 
$$
for some Hodge generic points $x_{i}$ of $X_{i}^{+}$ for $t+1\leq i\leq r$.
By the assumption of the previous lemma we see that we should have $Y=X^{+}$.

We'll prove in fact that $\wt{V}=X^{+}$,
but this implies that $\wt{V}$ is algebraic and as $Y$ is maximal algebraic in 
$\wt{V}$ we can conclude that $Y=\wt{V}=X^{+}$.
\end{rem}

We now recall some properties  of the  monodromy attached to a Hodge generic subvariety of $S$.
Let $Z$ be an irreducible Hodge generic algebraic subvariety of $S$
all of whose projections to $S_i$s are positive dimensional.

Fix a Hodge generic point $s$ of $Z$ and let $x$ be a point of $\pi^{-1}(s)$.

Fix a faithful rational representation $\rho\colon G\hookrightarrow \GL_n$.
The choice of a $\Gamma$-invariant lattice induces
a variation of polarisable Hodge structures on $S$.
We then obtain the monodromy representation 
$$
\rho^{mon}_s \colon \pi_1(Z^{sm},s)\lto \GL_n(\ZZ)
$$
where $Z^{sm}$ is the smooth locus of $Z$.
Let $\Gamma'$ be the image of $\rho^{mon}_s$.
Let $H^{mon}$ be the neutral connected component of the Zariski closure of
$\rho^{mon}_s$ in $\GL_{n\QQ}$.
By a theorem of Deligne-Andr\'e (see for example \cite{MoMo}, theorem 1.4),
the group $H^{mon}$ is a normal subgroup of $G$.
In our case ($G$ of adjoint type), $H^{mon}$ is a product of simple factors of $G$ and,
after reodering, we can write
$$
G = H^{mon} \times G_1
$$
Proposition 3.7 of \cite{MoMo} then shows that $V = S^1 \times V'$ where
$S^1$ is the product of the $S_i$s corresponding to $H^{mon}$ and $V'$ is a subvariety of the remaining factors. 
According to \ref{reduction2}, we have
$$
H^{mon} = G
$$

Via $\rho$, we view $\Gamma'$ is a subgroup of $\Gamma$.
Let $C(Z)$ be the maximum of the constants $n$ from theorems 5.1 and 6.1 of \cite{EY}.
We call such a constant $C(Z)$ the \emph{Nori constant} of $Z$. 
The next two propositions recall the main properties of the constant $C(Z)$.

\begin{prop}\label{irred}
Let $g\in G(\QQ)^+$ and $p>C(Z)$ a prime such that for any prime $l\not= p$, the image $g_l$ 
of $g$ in $G(\QQ_l)$ is contained in $K_l$, then $T_gZ$ is irreducible.
\end{prop}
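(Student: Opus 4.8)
The plan is to translate irreducibility of $T_gZ$ into a transitivity property of the monodromy action, and then to invoke the defining property of the Nori constant. Recall the Hecke correspondence attached to $g$: set $\Gamma_g := \Gamma \cap g\Gamma g^{-1}$ and $S_g := \Gamma_g\backslash X^+$. Since $\Gamma_g\subseteq\Gamma$ and $g^{-1}\Gamma_g g\subseteq\Gamma$, there are two finite \'etale maps $q_1,q_2\colon S_g\lto S$, induced by $\Gamma_g x\mapsto\Gamma x$ and $\Gamma_g x\mapsto\Gamma g^{-1}x$, and by definition $T_gZ=q_2(q_1^{-1}(Z))$. As $q_2$ is finite and $Z$ is irreducible, it suffices to show that $q_1^{-1}(Z)$ is irreducible. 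Because $\Gamma$ is neat, $S$ and $S_g$ are smooth and $q_1$ is a finite \'etale covering; hence $q_1^{-1}(Z)$ is irreducible if and only if its dense open subset $q_1^{-1}(Z^{sm})$ is connected, and, by covering space theory, this holds if and only if the monodromy group $\Gamma'$ — the image of $\pi_1(Z^{sm},s)\to\pi_1(S,s)=\Gamma$, which equals the image of $\rho^{mon}_s$ since $\rho$ is faithful — acts transitively on the fibre of $q_1$. Equivalently, it is enough to prove
$$
\Gamma'\cdot\Gamma_g=\Gamma.
$$

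Next I would localise this identity at the prime $p$. Writing $K=\prod_\ell K_\ell$ and $\Gamma=G(\QQ)\cap K$, the hypothesis $g_\ell\in K_\ell$ for $\ell\neq p$ forces $g_\ell K_\ell g_\ell^{-1}=K_\ell$ for $\ell\neq p$, so that
$$
\Gamma_g=\{\gamma\in\Gamma:\gamma_p\in H_p\},\qquad H_p:=K_p\cap g_pK_pg_p^{-1},
$$
an open subgroup of $K_p$. Thus $\gamma\mapsto\gamma_p$ identifies $\Gamma/\Gamma_g$ with the image of $\Gamma$ in $K_p/H_p$, and $\Gamma'\Gamma_g=\Gamma$ amounts to the equality of the images of $\Gamma$ and of $\Gamma'$ in $K_p/H_p$. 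Since $H_p$ is open, passing to closures this reduces to the equality of the open-and-closed subsets $\overline{\Gamma}_p\cdot H_p=\overline{\Gamma'}_p\cdot H_p$ of $K_p$, where $\overline{\Gamma}_p$ and $\overline{\Gamma'}_p$ denote the closures in $K_p$ of the images of $\Gamma$ and of $\Gamma'$. As $\Gamma'\subseteq\Gamma$, only the inclusion $\overline{\Gamma}_p\subseteq\overline{\Gamma'}_p\cdot H_p$ needs proof, and this is immediate once $\overline{\Gamma'}_p=\overline{\Gamma}_p$ is known.

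For this last equality I would invoke the Nori constant. Under the standing assumptions $H^{mon}=G$, so $\Gamma'$ is Zariski dense in $G$; recalling that $C(Z)$ is the larger of the two constants furnished by Theorems 5.1 and 6.1 of \cite{EY}, those theorems give, for every prime $p>C(Z)$, that the closure of $\Gamma'$ in $K_p$ coincides with that of $\Gamma$ (this is exactly the point of the Nori constant, and anything implying $\overline{\Gamma'}_pH_p=\overline{\Gamma}_pH_p$ would suffice). Combined with the reductions above, this proves the proposition.

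I expect essentially all the mathematical weight to rest on this last invocation: the passage from Zariski density of the monodromy group to the $p$-adic saturation of its closure for all large $p$ is the one genuinely non-formal ingredient, and it is precisely why $C(Z)$ is assembled from the Nori-type results of \cite{EY}. The earlier steps — rewriting $T_gZ$ through the covering $q_1$, the monodromy--transitivity dictionary, and the single-prime bookkeeping with the coset space $K_p/H_p$ — are routine; the only points requiring care are that neatness of $\Gamma$ makes all the varieties in sight smooth, and that one must argue with topological closures because $\Gamma_g$ is not in general normal in $\Gamma$.
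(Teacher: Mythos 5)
Your proposal is correct and follows essentially the same route as the paper: the paper's own proof is the one-line citation of Theorem 5.1 of \cite{EY}, and what you have written out is a reconstruction of the argument that underlies that theorem (passing through $q_1^{-1}(Z)$, the monodromy--transitivity dictionary, localisation at $p$, and the Nori-type saturation $\overline{\Gamma'}_p H_p = \overline{\Gamma}_p H_p$), so you are unpacking the cited result rather than taking a different path.
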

\begin{proof}
This is a direct application of the  theorem 5.1 of \cite{EY}.
\end{proof}

\begin{prop}\label{characterisation}
Let $g$ be an element of $G(\QQ)^+$ such that the following holds.
The images of $g_p$  in $G_i(\QQ_p)$ for $i=1,\dots , t$  are not contained in compact open subgroups
while the images of $g_p$ in $G_{t+1}(\QQ_p),\dots, G_r(\QQ_p)$ are contained $K_{p,i}$

Suppose that $ T_gZ=Z$ and $T_{g^{-1}}Z=Z$.
Then $Z= S_1 \times \cdots \times S_t \times Z'$ where 
$Z'$ is a subvariety of $S_{t+1}\times \cdots \times S_r$.
\end{prop}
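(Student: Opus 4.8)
The plan is to exploit the Hecke correspondence $T_g$ attached to $g$ together with the monodromy/Nori-constant machinery recalled above. The essential point is that, since $g_p$ acts with non-compact image in $G_i(\QQ_p)$ for $i=1,\dots,t$ and with image in $K_{p,i}$ for $i=t+1,\dots,r$, the Hecke correspondence $T_g$ is, up to the first $t$ factors, ``of infinite order'': its orbits on $S_1\times\cdots\times S_t$ are Zariski-dense, whereas on the remaining factors it restricts to an ordinary finite Hecke correspondence. Thus one should think of $T_g$ as $T_{g'}\times \mathrm{id}$ up to a genuinely moving part, where $g'$ has non-compact image in each of $G_1(\QQ_p),\dots,G_t(\QQ_p)$.

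First I would record that we may enlarge $g$ (replacing it by a suitable power, and multiplying by an element of $K$) without changing the hypotheses $T_gZ=Z$, $T_{g^{-1}}Z=Z$, so that the prime $p$ may be assumed $>C(Z)$ and so that Proposition \ref{irred} applies: all the iterated transforms $T_{g^n}Z$ are then irreducible. Next, from $T_gZ=Z$ one deduces $T_{g^n}Z=Z$ for all $n$ by functoriality of Hecke correspondences (composition $T_g\circ T_g$ contains $T_{g^2}$, and irreducibility forces equality of the relevant components). Now project to the product $S^{(t)}:=S_1\times\cdots\times S_t$ of the ``moving'' factors: the image $Z^{(t)}$ of $Z$ there is stable under the correspondence induced by $g$, which has dense orbits on $S^{(t)}$ because the images of $g_p$ in the $G_i(\QQ_p)$, $i\le t$, are not contained in compact subgroups (this is the standard fact that a Hecke operator whose $p$-component lies outside a maximal compact has dense orbits — cf.\ the use of Hecke operators sketched in the introduction and in \cite{EY}). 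A subvariety stable under a correspondence with dense orbits, all of whose iterates are irreducible of the same dimension, must be all of $S^{(t)}$; hence $Z^{(t)}=S_1\times\cdots\times S_t$.

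It then remains to upgrade ``$Z$ surjects onto $S_1\times\cdots\times S_t$'' to ``$Z=S_1\times\cdots\times S_t\times Z'$''. Here I would invoke the monodromy input: by the Deligne--André theorem the connected monodromy group $H^{mon}$ of $Z$ is a normal (hence product-of-simple-factors) subgroup of $G$, and Proposition 3.7 of \cite{MoMo} (already cited above) gives a splitting $Z=S^{H}\times Z''$ where $S^{H}$ is the product of the Shimura factors corresponding to $H^{mon}$. Combining with the surjectivity just proved onto $S_1\times\cdots\times S_t$ forces $H^{mon}$ to contain the factors $G_1,\dots,G_t$, and then the splitting reads $Z=S_1\times\cdots\times S_t\times Z'$ with $Z'\subset S_{t+1}\times\cdots\times S_r$, as claimed. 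Finally one checks that $Z'$ is genuinely a subvariety of the last $r-t$ factors and not of a proper Shimura subvariety cut out by the $K_{p,i}$-stability, which is immediate since on those factors $T_g$ is an honest finite correspondence fixing $Z'$.

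The main obstacle I anticipate is the density/irreducibility bookkeeping in the middle step: one must be careful that $T_gZ=Z$ as \emph{sets} (not merely that $T_gZ\supseteq Z$ or that some component agrees), and that passing to the projection $Z^{(t)}$ and to iterates $T_{g^n}$ preserves irreducibility and the correct dimension count — this is exactly where the Nori constant $C(Z)$ and Proposition \ref{irred} are indispensable. The Hodge-genericity of $Z$ (so that $H^{mon}$ is defined and large) and the reductions of Lemma \ref{reduction2} are what make the final product decomposition clean.
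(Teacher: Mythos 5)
Your proposal diverges from the paper's argument and the divergence introduces a genuine gap. The paper's proof is much more direct: it applies Theorem~6.1 of \cite{EY} \emph{pointwise}. For each point $x=(x_1,\dots,x_r)\in Z$, the closure of the $T_g+T_{g^{-1}}$ orbit of $x$ is exactly $S_1\times\cdots\times S_t\times\{x_{t+1}\}\times\cdots\times\{x_r\}$, and since $T_gZ=Z$ and $T_{g^{-1}}Z=Z$ this whole slice lies in $Z$. That immediately yields $Z = S_1\times\cdots\times S_t\times Z'$ with $Z'$ the projection of $Z$ to the last $r-t$ factors. No monodromy input is needed at this stage.

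Your version instead proves only the weaker statement that the projection $Z^{(t)}$ of $Z$ to $S_1\times\cdots\times S_t$ equals the whole product, and then tries to upgrade this to the product decomposition via Proposition~3.7 of \cite{MoMo}. This upgrade does not work. Moonen's proposition gives information in the opposite direction: if $H^{mon}$ does not project onto a simple factor $G_j$ then the projection of $Z$ to $S_j$ is a \emph{point}; applied with $H^{mon}=G$ (which in the paper is already forced by Lemma~\ref{reduction2} and the Deligne--Andr\'e theorem, before Proposition~\ref{characterisation} is ever invoked), it yields no constraint at all. In particular it certainly does not manufacture a splitting $Z=S^H\times Z''$. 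If it did, one would deduce $Z=S$ outright, making the whole Hecke argument superfluous. Moreover ``$Z$ surjects onto $S^{(t)}$'' by itself is strictly weaker than ``$Z$ contains the full slice $S^{(t)}\times\{x_{t+1}\}\times\cdots\times\{x_r\}$ through each of its points'': a subvariety can surject onto a factor without being a product over it, and nothing in your Step~3 rules that out. The missing ingredient is precisely the pointwise orbit--closure statement of \cite{EY}, Theorem~6.1, which you replaced by a coarser density statement. (Your Step~1 on replacing $g$ by a power is a harmless precaution but is already guaranteed in the paper by the hypotheses and the choice of $p>C(Z)$.)
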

\begin{proof}

For every $x\in Z$, the $T_{g}+T_{g^{-1}}$ orbit
of $x$ is contained in $Z$.
Let $(x_1,\dots, x_r)$ be a point of 
$Z$ (where for all $i$, $x_i$ is a point of $S_i$). 
By theorem 6.1 of \cite{EY}, the closure of the $T_{g}+T_{g^{-1}}$ orbit of $x$
is 
$$
S_1 \times \cdots \times S_t \times \{x_{t+1}\}\times \dots\times  \{x_r\}.
$$
Hence $Z = S_1\times \cdots \times S_t \times  Z'$ where $Z'$  is the image of 
$Z$ in 
$$
S_{t+1}\times \cdots \times S_r.
$$
\end{proof}

\section{Stabilisers of maximal algebraic subsets.}\label{stab}

We keep the notations of the previous section.
View $G(\RR)$ as an algebraic (hence definable) subset of some $\RR^m$
and $X$ as a semi-algebraic (in particular definable) subset of some $\RR^k$.

\begin{lem} \label{step1}
Let $Y$ be a maximal irreducible algebraic subset of $\wt{V}$.
Define
$$
\Sigma(Y) = \{ g\in G(\RR) : \dim(gY \cap \wt{V} \cap \cF) = \dim(Y) \}  
$$

(a) The set $\Sigma(Y)$ is definable and for all $g\in \Sigma(Y)$,  $g Y \subset \wt{V}$.

(b) For all $\gamma\in \Sigma(Y)\cap \Gamma$, $\gamma.Y$ is a maximal algebraic subvariety of $\wt{V}$.
\end{lem}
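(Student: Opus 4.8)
The statement has two parts: first that $\Sigma(Y)$ is definable and that $gY\subset\wt V$ for $g\in\Sigma(Y)$, and second that $\gamma Y$ is again a maximal algebraic subvariety of $\wt V$ when $\gamma\in\Sigma(Y)\cap\Gamma$. I would treat definability first. The set $G(\RR)$ is a real algebraic (hence definable) subset of some $\RR^m$, and by Proposition \ref{prop2.2} the action map $\phi\colon G\times\cX\to\cX$ is semi-algebraic; composing with the (fixed, semi-algebraic) parametrisation of $Y$, the family $\{gY\}_{g\in G(\RR)}$ is a definable family of semi-algebraic subsets of $\cX$. The domain $\cF$ is definable in $\RR_{an}$ by Proposition \ref{fundomain}, and $\wt V=\pi^{-1}V$ is a closed analytic (in fact semi-algebraic) subset. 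Hence $gY\cap\wt V\cap\cF$ is a definable family, and since dimension is definable in o-minimal structures (the set of parameters for which a definable family member has a prescribed dimension is itself definable), the condition $\dim(gY\cap\wt V\cap\cF)=\dim(Y)$ cuts out a definable subset $\Sigma(Y)\subset G(\RR)$. This is routine o-minimality bookkeeping.

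For the inclusion $gY\subset\wt V$: if $g\in\Sigma(Y)$ then $gY\cap\wt V$ contains a subset of dimension $\dim(Y)=\dim(gY)$, namely a piece of $gY\cap\wt V\cap\cF$. Now $gY$ is irreducible (it is the image of the irreducible $Y$ under the biholomorphism $g$ of $\cX$), so an analytic subset of $gY$ of full dimension must be Zariski-dense in $gY$; since $\wt V$ is closed, $gY\subset\wt V$. Here one should be a little careful that "irreducible algebraic subvariety of $\cX$" in the sense of Section \ref{reduction} is preserved by the action of $g\in G(\RR)$: indeed $g$ extends to a biregular automorphism of an appropriate algebraic completion (the compact dual or the ambient $\wt{\cX}$), because the $G(\RR)$-action is semi-algebraic and holomorphic, so it carries irreducible algebraic subvarieties to irreducible algebraic subvarieties. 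This remark also shows $gY$ is genuinely an algebraic subvariety of $\cX$, not merely an analytic one.

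For part (b), suppose $\gamma\in\Sigma(Y)\cap\Gamma$. By part (a), $\gamma Y\subset\wt V$, and $\gamma Y$ is an irreducible algebraic subvariety of $\cX$ of the same dimension as $Y$ by the preceding paragraph. To see $\gamma Y$ is \emph{maximal}: if $Y'\supset\gamma Y$ were an irreducible algebraic subvariety of $\wt V$ strictly containing $\gamma Y$, then $\gamma^{-1}Y'$ would be an irreducible algebraic subvariety of $\cX$ (again using that $\gamma^{-1}\in\Gamma\subset G(\RR)$ acts by a biregular automorphism), and since $\Gamma$ preserves $\wt V=\pi^{-1}V$ (as $\pi\circ\gamma=\pi$), we would have $\gamma^{-1}Y'\subset\wt V$ with $\gamma^{-1}Y'\supsetneq Y$, contradicting the maximality of $Y$. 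Hence $\gamma Y$ is maximal. The one point to watch is exactly the claim, used twice, that the $\Gamma$-action (indeed the $G(\RR)$-action) carries the class of irreducible algebraic subvarieties of $\cX$ to itself and preserves $\wt V$; the second is immediate from $\Gamma$-invariance of $\pi$, and the first follows from the semi-algebraicity of the action together with the finiteness-of-components statement of \cite{FL} quoted in Section \ref{reduction}. I expect the genuine content — and the only place requiring care rather than routine verification — to be this compatibility of the group action with the notion of "algebraic subvariety of $\cX$"; everything else is a direct application of Proposition \ref{prop2.2}, Proposition \ref{fundomain}, and standard o-minimality.
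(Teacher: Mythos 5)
Your proposal is correct and follows essentially the same route as the paper: definability of $\Sigma(Y)$ from the semi-algebraicity of the $G(\RR)$-action (Proposition \ref{prop2.2}), definability of $\cF$ (Proposition \ref{fundomain}) and definability of dimension in an o-minimal structure; the inclusion $gY\subset\wt V$ from irreducibility of $gY$ together with full-dimensionality of $gY\cap\wt V$ inside $gY$; and maximality of $\gamma Y$ by translating a hypothetical larger algebraic $Y'\subset\wt V$ back by $\gamma^{-1}$, using $\Gamma$-invariance of $\wt V$ and maximality of $Y$. Your spelled-out justification of $gY\subset\wt V$ (the paper merely writes that ``by definition of $\Sigma(Y)$'' one has $gY\cap\cF\subset\wt V$, which really needs the irreducibility argument you give) and your explicit remark that the $G(\RR)$-action carries algebraic subvarieties of $\cX$ to algebraic subvarieties are correct clarifications of points the paper leaves implicit.
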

\begin{proof}

The set $\cF\cap \wt{V}$ is definable by proposition \ref{p.4.1}.
For each $g\in G(\RR)$, the set $gY \cap \wt{V} \cap \cF$ is a definable subset 
of $\RR^n$. The definability of $\Sigma(Y)$ is now a
consequence of the second part of proposition \ref{prop2.2} and  proposition 1.5 of \cite{VDD}.

Let $g\in \Sigma(Y)$, by definition of $\Sigma(Y)$
$$
g Y \cap \cF \subset \wt{V}.
$$
Both $g Y$ and $\wt{V}$ are analytic varieties, therefore the
above inclusion implies that $gY \subset \wt{V}$. This finishes the proof
of the first part of lemma.

Let $\gamma\in \Sigma(Y)\cap \Gamma$. Then $\gamma.Y\subset \wt{V}$
by the previous result. Let $Y'\subset \wt{V}$ be an algebraic set containing $\gamma.Y$.
Then $\gamma^{-1}.Y'\subset \tilde{V}$. As $Y\subset \gamma^{-1}.Y'$,
$\gamma^{-1}Y'\subset \wt{V}$ and by maximality of $Y$, $Y'=\gamma.Y$.

\end{proof}

\begin{lem} \label{intersection}

Let $Y$ be a maximal algebraic subset of $\wt{V}$.
Define

$$
 \Sigma' (Y)= \{ g \in G(\RR) : g^{-1}\cF \cap Y \not= \emptyset \}
$$
Then
$$
\Sigma(Y)\cap \Gamma = \Sigma'(Y)\cap \Gamma
$$
\end{lem}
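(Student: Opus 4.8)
The claim is the equality $\Sigma(Y)\cap\Gamma=\Sigma'(Y)\cap\Gamma$, where $\Sigma(Y)$ records elements $g$ with $\dim(gY\cap\wt V\cap\cF)=\dim Y$ and $\Sigma'(Y)$ records elements $g$ with $g^{-1}\cF\cap Y\neq\emptyset$. I would prove the two inclusions separately, exploiting the fact that for $\gamma\in\Gamma$ both $Y$ and $\gamma Y$ are genuine algebraic subvarieties of $\wt V$ (for $\gamma Y$ this is part (b) of Lemma \ref{step1}, once we know $\gamma\in\Sigma(Y)$), so their intersections with the fundamental domain are well-behaved.

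\emph{The inclusion $\Sigma(Y)\cap\Gamma\subset\Sigma'(Y)\cap\Gamma$.} Let $\gamma\in\Sigma(Y)\cap\Gamma$. Then $\dim(\gamma Y\cap\wt V\cap\cF)=\dim Y$, so in particular $\gamma Y\cap\cF\neq\emptyset$; pick $x\in\gamma Y\cap\cF$. Then $\gamma^{-1}x\in Y$ and $\gamma^{-1}x\in\gamma^{-1}\cF$, so $\gamma^{-1}\cF\cap Y\neq\emptyset$, i.e. $\gamma\in\Sigma'(Y)$. This direction is immediate.

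\emph{The inclusion $\Sigma'(Y)\cap\Gamma\subset\Sigma(Y)\cap\Gamma$.} Let $\gamma\in\Sigma'(Y)\cap\Gamma$, so there is $y\in Y$ with $\gamma y\in\cF$. Since $Y$ is an irreducible algebraic subvariety of $\wt V=\pi^{-1}V$, we have $\gamma Y\subset\wt V$ as well (because $\wt V$ is $\Gamma$-invariant). Now $\gamma Y$ is an irreducible analytic (indeed algebraic) subvariety of $\wt V$ of dimension $\dim Y$, passing through the point $\gamma y\in\cF$. Since $\cF$ is open and $\Gamma\ol\cF=\cX$ (Proposition \ref{fundomain}), and we have arranged that $\dim(Y\cap\cF)=\dim Y$ — equivalently $Y\cap\cF$ is Zariski-dense in $Y$ — the key point is that $\dim(\gamma Y\cap\cF)=\dim(\gamma Y)=\dim Y$. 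This follows because $\gamma\cF$ being open and $\gamma y$ lying in it forces $\gamma Y\cap\cF$ to contain a nonempty open subset of the analytic variety $\gamma Y$ near $\gamma y$ (more precisely: $\gamma(Y\cap\gamma^{-1}\cF)$ is a nonempty open subset of $\gamma Y$, hence of full dimension in the irreducible analytic set $\gamma Y$). Since $\gamma Y\subset\wt V$, this gives $\dim(\gamma Y\cap\wt V\cap\cF)=\dim Y$, i.e. $\gamma\in\Sigma(Y)$.

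\emph{Main obstacle.} The only subtlety is the dimension bookkeeping in the second inclusion: from $\gamma^{-1}\cF\cap Y\neq\emptyset$ one must deduce that the intersection is \emph{full-dimensional}, not just nonempty. Since $\cF$ is open, $Y\cap\gamma^{-1}\cF$ is a nonempty open subset of $Y$; as $Y$ is irreducible of dimension $\dim Y$, any nonempty open subset has dimension $\dim Y$, and translating by $\gamma$ preserves dimension. So in fact there is no real obstacle — the equality is essentially formal once one uses that $\cF$ is open, that $\wt V$ is $\Gamma$-stable, and that $Y$ is irreducible. One should just be careful to phrase the argument using openness of $\cF$ rather than of $\ol\cF$, and to note that membership of a single point of $\gamma Y$ in the open set $\cF$ already forces a full-dimensional intersection because $\gamma Y$ is purely $\dim Y$-dimensional and irreducible.
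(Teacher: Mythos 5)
Your proof is correct and follows essentially the same route as the paper's: the first inclusion is immediate, and for the second you use exactly the paper's two key observations — that $\cF$ (hence $\gamma^{-1}\cF$) is open, so a nonempty intersection with the irreducible set $Y$ is automatically full-dimensional, and that $\wt V$ is $\Gamma$-invariant, so $\gamma Y\subset\wt V$ and intersecting with $\wt V$ costs nothing. The passing reference to Lemma \ref{step1}(b) in your plan is unnecessary for the argument, but this does not affect correctness.
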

\begin{proof}
The inclusion $\Sigma(Y)\subset \Sigma'(Y)$ is obvious.

Let $\gamma \in \Sigma'(Y)\cap \Gamma$. Then $\gamma^{-1}\cF \cap Y \not= \emptyset$.
The set $\gamma^{-1} \cF$ is an open subset of $\RR^n$, hence 
$\dim( \gamma^{-1} \cF \cap Y) = \dim(Y)$.
Translating by $\gamma$, we get $\dim(\gamma Y\cap \cF) = \dim(Y)$.
Therefore
$$
\dim(\gamma Y \cap \wt{V} \cap \cF) = \dim(Y)
$$
i.e. $\gamma \in \Sigma (Y)\cap \Gamma$.
Note that that we have used in an essenatial way that $\wt{V}$ is $\Gamma$-invariant and hence
$\gamma Y \subset \wt{V}$ for any $\gamma \in \Gamma$. In particular 
$\gamma Y \cap \cF = \gamma Y \cap \wt{V} \cap \cF$ for any $\gamma \in \Gamma$.
\end{proof}

We recall that  
 we defined in section \ref{s.3.1} a finite set 
$S_{\cF}$ generating $\Gamma$ and the associated
word metric $l:\Gamma\rightarrow \NN$. 

\begin{prop}\label{p.5.3}
Let $N_{Y,\Gamma}(N):=\vert \{\gamma\in \Gamma\cap \Sigma(Y), l(\gamma)\leq N\}$. There exist
a constant $c>0$ such that for all $N$ big enough 
\begin{equation}
N_{Y,\Gamma}(N)\ge e^{cN}.
\end{equation}
\end{prop}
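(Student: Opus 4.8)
The plan is to combine Theorem \ref{t1.6} with Lemma \ref{intersection}. Recall that $Y$ is a maximal irreducible algebraic subset of $\wt{V}$, so it is positive dimensional and contains, in particular, an affine algebraic curve. More precisely, since $\dim(Y\cap\cF)=\dim(Y)$ by our choice of $\cF$, we may pick a point $y_0\in Y\cap\cF$ and slice $Y$ by generic hyperplanes through $y_0$ to obtain an irreducible affine algebraic curve $C\subset \CC^N$ with $C\subset Y$ and $C\cap\cF\neq\emptyset$. The first step is therefore to produce such a curve $C$ sitting inside $Y$ and meeting the (relatively compact) fundamental domain $\cF$; this is a standard Bertini-type argument and the only subtlety is to ensure irreducibility, which holds for a generic choice of the linear slices.

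Next I would apply Theorem \ref{t1.6} to this curve $C$. That theorem yields a constant $c>0$ such that for all $n$ large,
$$
N_{C}(n)=\bigl|\{\gamma\in\Gamma:\dim(\gamma\cF\cap C)=1,\ l(\gamma)\le n\}\bigr|\ge e^{cn}.
$$
Now observe that if $\dim(\gamma\cF\cap C)=1$ then in particular $\gamma\cF\cap C\neq\emptyset$, hence, since $C\subset Y$, we get $\gamma\cF\cap Y\neq\emptyset$, i.e.\ $\gamma^{-1}\in\Sigma'(Y)$ in the notation of Lemma \ref{intersection} (applied after replacing $\gamma$ by $\gamma^{-1}$; note $S_{\cF}=S_{\cF}^{-1}$ so $l(\gamma)=l(\gamma^{-1})$, and the counting is unaffected). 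By Lemma \ref{intersection}, $\Sigma'(Y)\cap\Gamma=\Sigma(Y)\cap\Gamma$, so all these $\gamma^{-1}$ lie in $\Sigma(Y)\cap\Gamma$. Therefore
$$
N_{Y,\Gamma}(n)=\bigl|\{\gamma\in\Gamma\cap\Sigma(Y):l(\gamma)\le n\}\bigr|\ge N_{C}(n)\ge e^{cn}
$$
for all $n$ large enough, which is exactly the assertion with the same constant $c$.

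The main obstacle is the first step: one must be careful that cutting $Y$ down to a curve $C$ keeps $C$ irreducible (so that Theorem \ref{t1.6}, stated for an affine integral curve, applies) while still forcing $C$ to meet $\cF$. This is handled by choosing the linear sections to pass through a fixed point $y_0\in Y\cap\cF$ and invoking irreducibility of generic linear sections of an irreducible quasi-projective variety of dimension $\ge 2$; in the base case $\dim Y=1$ one simply takes $C=Y$. Everything after that is bookkeeping: the passage from $\dim(\gamma\cF\cap C)=1$ to $\gamma\cF\cap Y\neq\emptyset$ is immediate, the identity $l(\gamma)=l(\gamma^{-1})$ is built into the definition of the word metric, and Lemma \ref{intersection} supplies the required equality of index sets, so the exponential lower bound transfers verbatim.
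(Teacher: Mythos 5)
Your overall plan — cut $Y$ down to an algebraic curve meeting $\cF$, apply Theorem \ref{t1.6} to it, then pass from $\gamma\cF\cap C\neq\emptyset$ to $\gamma\cF\cap Y\neq\emptyset$ and transfer the count via Lemma \ref{intersection} together with $l(\gamma)=l(\gamma^{-1})$ — is exactly the paper's argument, with some welcome extra detail on the Bertini slicing step that the paper leaves implicit.

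One assertion needs to be repaired: \emph{"$C\subset Y$" cannot hold}. An affine algebraic curve $C\subset\CC^N$ is never relatively compact, whereas $Y$ lives inside the bounded domain $X$, so $C\not\subset Y$. What the slicing of $\widehat{Y}$ (the algebraic set in $\CC^N$ of which $Y$ is an irreducible component of $\widehat{Y}\cap X$) by generic hyperplanes through $y_0$ actually gives is a curve $C$ such that the connected component $C_0$ of $C\cap X$ through $y_0$ is contained in $Y$ — this is the formulation the paper uses ("$Y$ contains a component of the intersection of an algebraic curve $C$ with $X$"). This matters for your deduction $\gamma\cF\cap C\neq\emptyset\Rightarrow\gamma\cF\cap Y\neq\emptyset$, since a priori $\gamma\cF\cap C$ could lie in a component of $C\cap X$ different from $C_0$. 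To close this, note that $C_0$ is not relatively compact in $X$ (a bounded domain contains no positive-dimensional compact analytic subvarieties), so $\overline{C_0}\cap\partial X\neq\emptyset$; running the construction in the proof of Theorem \ref{t1.6} with the boundary point $b$ chosen on $\overline{C_0}$, all the exponentially many $\gamma$ it produces satisfy $\gamma\cF\cap C_0\neq\emptyset$, hence $\gamma\cF\cap Y\neq\emptyset$. (To be fair, the paper's own write-up of Proposition \ref{p.5.3} is equally terse on this component issue.) With that adjustment, the rest of your bookkeeping — passing to $\gamma^{-1}$, using $S_\cF=S_\cF^{-1}$, and invoking Lemma \ref{intersection} — is correct.
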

\begin{proof}
As $Y$ is semi-algebraic and maximal in $\wt{V}$, $Y$ contains a component of the intersection of an algebraic curve
$C$ with $X$. For $\gamma\in \Gamma$ the condition $\gamma\cF\cap C\neq \emptyset$
implies that $\gamma\cF\cap Y\neq \emptyset$. The result is then a consequence of the previous lemma and
the part b of theorem \ref{t1.6}.
\end{proof}

Let $\Theta_{Y}$ be the stabilizer of $Y$ in $G(\RR)$. 
The main result of this part is
the following statement.

\begin{teo}\label{t.5.5}
Let $H_Y$ be the neutral component  
 of  the Zariski closure of $\Gamma\cap \Theta_{Y}$.
Then $H_Y$ is a non-trivial reductive $\QQ$-algebraic group.
\end{teo}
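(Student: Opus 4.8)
The plan is to combine the exponential growth estimate from Proposition~\ref{p.5.3} with the Pila--Wilkie counting theorem applied to the definable set $\Sigma(Y)$, in order to produce a positive-dimensional algebraic subgroup inside $\Theta_Y$, and then to upgrade its structure to reductive and defined over $\QQ$. First I would estimate heights: since $\Gamma$ is contained in $\GL_n(\ZZ)$ and $S_\cF$ is a fixed finite generating set, there is a constant $A>1$ such that every $\gamma\in\Gamma$ with $l(\gamma)\le N$ satisfies $H(\gamma)\le A^N$ (matrix entries of a product of $l(\gamma)$ bounded-entry integer matrices grow at most geometrically). Conversely, the number of $\gamma\in\GL_n(\ZZ)$ of height $\le t$ is polynomial in $t$. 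Hence Proposition~\ref{p.5.3} gives, for $t=A^N$, that $\Sigma(Y)$ contains at least $e^{cN}=t^{c/\log A}$ rational (indeed integral) points of height $\le t$; this is more than any fixed power $C_\epsilon t^\epsilon$ once $\epsilon<c/\log A$, so by Theorem~\ref{PW} these points cannot all lie in $\Sigma(Y)\setminus\Sigma(Y)^{alg}$. Therefore $\Sigma(Y)^{alg}\neq\emptyset$: $\Sigma(Y)$ contains a positive-dimensional semialgebraic subset, in fact (by the block part of Theorem~\ref{PW} and a pigeonhole over the $\le C_\epsilon t^\epsilon$ blocks) a semialgebraic block $W$ of positive dimension containing $\gg t^{c/\log A - \epsilon}$ points of $\Gamma$ of height $\le t$, for arbitrarily large $t$.

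Next I would pass from $W$ to a subgroup. By Lemma~\ref{step1}(a), $gY\subset\wt V$ for every $g\in\Sigma(Y)$, and by Lemma~\ref{step1}(b), $\gamma Y$ is a maximal algebraic subvariety of $\wt V$ for every $\gamma\in\Gamma\cap\Sigma(Y)$. Now fix one such $\gamma_0$ of minimal word length; then for any $\gamma\in\Gamma\cap\Sigma(Y)$ the set $\gamma_0^{-1}\gamma Y$ is again a maximal algebraic subvariety of $\wt V$ containing... — more usefully, I would argue that the set of $g\in G(\RR)$ with $gY=Y$, namely $\Theta_Y$, is a real algebraic subgroup, and that $\Gamma\cap\Theta_Y$ is large: indeed among the many $\gamma\in\Gamma\cap\Sigma(Y)$ there are, by maximality of $Y$, only finitely many distinct translates $\gamma Y$ that are \emph{contained in a fixed bounded region meeting $\cF$} — wait, this needs care. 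The cleaner route, which I would follow, is: the block $W\subset\Sigma(Y)$ gives a positive-dimensional semialgebraic family of translates $\{wY : w\in W\}$, all inside $\wt V$; since $Y$ is maximal, no algebraic subvariety of $\wt V$ properly contains $Y$, so the union $\bigcup_{w\in W} wY$ is \emph{not} algebraic of larger dimension, which forces the generic translate $wY$ to equal $w'Y$ for $w,w'$ in a positive-dimensional subset — more precisely, the stabiliser-type set $\{(w,w')\in W\times W : wY=w'Y\}$, equivalently $w'^{-1}w\in\Theta_Y$, must be positive-dimensional, so $\Theta_Y$ contains a positive-dimensional semialgebraic set through the identity. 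Taking the group it generates (a connected algebraic subgroup, since the Zariski closure of a semialgebraic set through $e$ in a connected algebraic group generates a connected algebraic subgroup), we get that $\Theta_Y$ contains a positive-dimensional connected real algebraic subgroup, and moreover it contains $\gg t^{\eta}$ elements of $\Gamma$ of height $\le t$ for all large $t$; hence the Zariski closure of $\Gamma\cap\Theta_Y$ is positive-dimensional, and its neutral component $H_Y$ is defined over $\QQ$ because $\Gamma\cap\Theta_Y\subset\GL_n(\QQ)$ and the Zariski closure of a set of rational matrices is $\QQ$-algebraic (Galois descent / Chevalley).

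It remains to prove $H_Y$ is reductive. The key geometric input is that $\Gamma\cap\Theta_Y$ consists of elements preserving $Y\subset\cX$, and $\cX$ carries the $\Gamma$-invariant Bergman metric of negative curvature; equivalently, $\Theta_Y$ acts on the totally geodesic (by Lemma 4.1 of \cite{PT}, $Y$ is in fact totally geodesic being algebraic and weakly special-to-be) submanifold $Y$, and a group of isometries of a Hadamard manifold that contains a lattice-like (Zariski-dense in $H_Y$) discrete subgroup cannot have a nontrivial unipotent radical. Concretely: suppose $R_u(H_Y)\neq 1$; a nontrivial unipotent $u\in R_u(H_Y)(\RR)$ acting on the bounded symmetric domain $\cX$ (inside $G(\RR)$, $G$ adjoint and $\QQ$-anisotropic) — but here $G(\RR)$ may well contain unipotents, so instead I would use that $\Gamma$ is cocompact hence contains no nontrivial unipotents (a standard consequence of cocompactness, $G$ being $\QQ$-anisotropic), and $\Gamma\cap\Theta_Y$ is Zariski-dense in $H_Y$; since a $\QQ$-group whose $\ZZ$-points (a Zariski-dense subgroup thereof) contain no nontrivial unipotent element must be reductive — more precisely, if $H_Y$ had a nontrivial unipotent radical, then by Borel's density-type arguments $\Gamma\cap\Theta_Y$ (Zariski-dense, arithmetic, inside $H_Y(\ZZ)$) would meet $R_u(H_Y)(\QQ)$ nontrivially, contradicting torsion-freeness-plus-no-unipotents of the cocompact $\Gamma$. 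I expect \textbf{this reductivity step to be the main obstacle}: one must be careful that Zariski-density of $\Gamma\cap\Theta_Y$ in $H_Y$ genuinely forces the arithmetic subgroup to see the unipotent radical (this uses that $H_Y$ is $\QQ$-defined and a lemma that a Zariski-dense subgroup of a $\QQ$-group $H$ with nontrivial $R_u(H)$, consisting of semisimple elements, cannot exist when combined with the cocompactness of $\Gamma$ — essentially the statement that a lattice in a group with nontrivial unipotent radical is not contained in the set of semisimple elements). The non-triviality ($H_Y$ positive-dimensional) is exactly the output of the counting argument above, so the theorem follows.
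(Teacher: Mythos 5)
Your overall strategy is the same as the paper's: use Proposition~\ref{p.5.3} together with Pila--Wilkie to force many elements of $\Gamma\cap\Sigma(Y)$ into a single semialgebraic block, use maximality of $Y$ to place that block inside a coset of $\Theta_Y$, take quotients to get many $\Gamma$-points in $\Theta_Y$ itself, and conclude $H_Y$ is a nontrivial $\QQ$-group, with reductivity coming from $\QQ$-anisotropy of $G$. However, the block-to-coset step as you write it has a genuine gap. You assert that ``since $Y$ is maximal, the union $\bigcup_{w\in W}wY$ is not algebraic of larger dimension, which forces the generic translate $wY$ to equal $w'Y$''. This does not follow: the union is semialgebraic, not obviously algebraic, so maximality of $Y$ among \emph{algebraic} subvarieties of $\wt V$ says nothing about it directly. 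What is needed is exactly the identity $\wt V^{ca}=\wt V^{sa}$ of equation (\ref{eq300}) (Lemma 4.1 of Pila--Tsimerman), which upgrades a positive-dimensional connected semialgebraic subset of $\wt V$ to a positive-dimensional algebraic one, and hence lets maximality of $\gamma Y$ (Lemma~\ref{step1}(b)) kill any excess dimension. The paper's Lemma~\ref{l.5.6} makes this precise locally: for a small connected semialgebraic open $U_\gamma\ni\gamma$ inside the block one gets $U_\gamma\cdot Y=\gamma\cdot Y$ by combining (\ref{eq300}) with maximality, and then path-connectedness of the block propagates this to $W\subset\gamma\Theta_Y$. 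Your sketch omits the crucial appeal to (\ref{eq300}), which is where the actual work lies.

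You also substantially over-engineer the reductivity step, and even single it out as the ``main obstacle'' -- it is in fact the easiest part of the proof, a one-line observation. Since $S$ is compact, $G$ is $\QQ$-anisotropic, and hence $G(\QQ)$ contains no nontrivial unipotent elements. If $R_u(H_Y)$ were nontrivial it would be a nontrivial $\QQ$-defined unipotent subgroup of $G$, so $R_u(H_Y)(\QQ)\neq\{1\}$, giving nontrivial unipotents in $G(\QQ)$ -- a contradiction. No appeal to Borel density, Hadamard-manifold geometry, or torsion-freeness of $\Gamma$ is needed. Moreover, the parenthetical claim that $Y$ is ``totally geodesic, being algebraic and weakly-special-to-be'' is circular: that $Y$ is weakly special is precisely what the Ax--Lindemann theorem is meant to establish, and it cannot be assumed in the proof of Theorem~\ref{t.5.5}. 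The $\QQ$-rationality argument via Galois stability of the Zariski closure of a set of rational matrices is correct and matches what the paper leaves implicit.
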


The proof will use the following two lemmas.
\begin{lem}\label{l.5.6}
Let $W$ be a semi-algebraic block of $\Sigma(Y)$ containing some $\gamma\in \Sigma(Y)\cap \Gamma$. Then 
$$
W\subset \gamma \Theta_{Y}.
$$
\end{lem}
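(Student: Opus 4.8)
The plan is to show that any semi-algebraic block $W$ of $\Sigma(Y)$ which meets $\Sigma(Y)\cap\Gamma$ in some element $\gamma$ is contained in the coset $\gamma\Theta_Y$, i.e.\ that every $g\in W$ satisfies $g\gamma^{-1}\in\Theta_Y$, equivalently $g\gamma^{-1}Y=Y$, equivalently $gY=\gamma Y$. The key geometric input is part (a) of Lemma \ref{step1}: every $g\in\Sigma(Y)$ satisfies $gY\subset\wt V$; and part (b), together with Lemma \ref{intersection}, tells us that $\gamma Y$ is itself a maximal algebraic subvariety of $\wt V$. So I want to upgrade "$gY\subset\wt V$ for each $g\in W$" to "$gY=\gamma Y$ for each $g\in W$", using the connectedness and semi-algebraicity of $W$ together with the maximality of $\gamma Y$.

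The main step is to consider the family $\{gY : g\in W\}$ as $g$ ranges over the connected block $W$. Since $W$ is connected and semi-algebraic and the action map $G\times\cX\to\cX$ is semi-algebraic (Proposition \ref{prop2.2}), the union $\bigcup_{g\in W} gY$ is a connected semi-algebraic subset of $\cX$ contained in $\wt V$ (by Lemma \ref{step1}(a)). Now I invoke the equality $Z^{ca}=Z^{sa}$ from \eqref{eq300}: the semi-algebraic locus of a closed analytic set coincides with its complex algebraic locus, so this union is contained in the complex algebraic locus $\wt V^{ca}$, and hence (after passing to the irreducible algebraic subvariety through a given point, or to the union of those of maximal dimension) it is covered by irreducible algebraic subvarieties of $\wt V$. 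Because $\dim(gY)=\dim(Y)$ is constant on $W$, and because $Y$ — hence each $gY$ — is maximal of its dimension, each irreducible algebraic component of this union containing some $gY$ must equal $gY$; in particular distinct translates $gY$ are either equal or meet in lower dimension. A connectedness argument on $W$ then forces $gY$ to be locally constant: the set $\{g\in W : gY=\gamma Y\}$ is closed (the $gY$ being closed analytic sets varying continuously) and open (if $gY=\gamma Y$ then for $g'$ near $g$ the set $g'Y$ is a maximal algebraic subvariety meeting $gY=\gamma Y$ in dimension $\dim Y$, forcing $g'Y=\gamma Y$ by maximality) and nonempty (it contains $\gamma$), hence all of $W$.

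Thus $gY=\gamma Y$ for all $g\in W$, i.e.\ $g\gamma^{-1}\in\Theta_Y$ for all $g\in W$, which is precisely $W\subset\gamma\Theta_Y$. The step I expect to be the main obstacle is making rigorous the "$gY$ is locally constant on $W$" claim — in particular justifying that two maximal algebraic subvarieties of $\wt V$ of the same dimension which intersect in that dimension must coincide (this uses irreducibility of each $gY$ together with maximality, applied to a common algebraic subvariety containing both, exactly as in the proof of Lemma \ref{step1}(b)), and checking that $g\mapsto gY$ is continuous in a strong enough sense for the openness/closedness argument. Everything else is a bookkeeping use of Proposition \ref{prop2.2}, Lemma \ref{step1}, Lemma \ref{intersection} and the identity \eqref{eq300}.
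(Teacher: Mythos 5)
Your approach is essentially the paper's — combine Lemma \ref{step1}, the semi-algebraicity of the $G$-action (Proposition \ref{prop2.2}), the identity \eqref{eq300}, the maximality of $\gamma Y$, and the connectedness of $W$ — but the execution has a gap at exactly the point you flagged. The openness step of your open/closed argument is wrong as stated: for $g'$ near $g$ the translate $g'Y$ need not meet $gY=\gamma Y$ in dimension $\dim Y$ (two translates of an irreducible analytic set by nearby group elements can have small or empty intersection), so maximality of $\gamma Y$ cannot be invoked as you do. Moreover your first paragraph stops short of the conclusion you need: from \eqref{eq300} you deduce only that $\bigcup_{g\in W}gY$ is \emph{covered} by algebraic subvarieties of $\wt V$, and you then assert that each $gY$ is ``maximal of its dimension,'' which is not given for $g\notin\Gamma$ — Lemma \ref{step1}(b) provides maximality only for $\gamma\in\Gamma$.

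The paper's argument fixes both issues at once. For any connected open semi-algebraic $U\subset W$ containing $\gamma$, the set $U\cdot Y$ is a connected, positive-dimensional, semi-algebraic subset of $\wt V$ containing $\gamma Y$; by the result underlying \eqref{eq300} it therefore lies inside a single algebraic subvariety of $\wt V$, which by maximality of $\gamma Y$ must equal $\gamma Y$. Hence $U\cdot Y=\gamma Y$, so $U\subset\gamma\Theta_{Y}$. This is then propagated to all of $W$ by covering a compact path from $\gamma$ to an arbitrary $w\in W$ by finitely many such open semi-algebraic sets. Your open/closed scheme would also work if you replaced the flawed openness step with this local semi-algebraic argument (applied to a small connected open semi-algebraic neighbourhood of $g$ in $W$, after translating back to $\gamma$), but the argument you actually gave does not close.
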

\begin{proof}

By the part (b) of lemma \ref{step1}, we know that
 $\gamma.Y$ is a maximal algebraic subvariety of $\wt{V}$. 
Let $U_{\gamma}$ be an open connected semi-algebraic subset of $W$ containing $\gamma$.
Then $U_{\gamma} \cdot Y$ is semi-algebraic containing $\gamma\cdot Y$.
By maximality of $\gamma \cdot Y$ and the equality  (\ref{eq300}) applied to $\tilde{V}$, we have
$$
U_{\gamma} \cdot Y = \gamma \cdot Y
$$
Let $w\in W$. We claim that there exists a connected semi-algebraic open subset $U$ of $W$
which contains $w$ and $\gamma$.
Indeed, $W$ is path connected, we choose a compact path between $w$ and $\gamma$.
We can cover this path with a finite number of open semi-algebraic subsets of $W$. Their union is the desired subset $U$.
Then, by the previous argument,
$$
U\cdot Y = \gamma \cdot Y = w\cdot Y
$$
Therefore $\gamma.Y=W.Y$ and 
$W\subset \gamma\Theta_{Y}$. 
\end{proof}

We define the height $H(\alpha)$  of an element $\alpha$ of $\GL_n(\ZZ)$ as the maximum
of the absolute values of the coefficients of $\alpha$. The triangle inequality shows that
if $\alpha, \beta \in \GL_n(\ZZ)$, then
\begin{equation}\label{eq.5}
H(\alpha\beta) \leq n H(\alpha) H(\beta)
\end{equation}
and 
\begin{equation}\label{eq.6}
H(\alpha^{-1}) \leq c_{n} H(\alpha)^{n-1}
\end{equation}
for a positive constant $c_{n}$ depending only on $n$.

We recall that we have fixed an embedding of $\Gamma$ in some $\GL_{n}(\ZZ)$.
Let $A$ be the maximum of the heights of elements of $ S_{\cF}$.
Then an element $\gamma\in \Gamma$ with $l(\gamma)\leq N$
satisfies $H(\gamma)\leq (nA)^{N}$.

We recall that for any positive real $T$ we defined 
$$
\Theta(\Sigma_{Y},T):=\{g\in G(\QQ)\cap \Sigma(Y),\ H(g)\leq T\}
$$
 and
 $N(\Sigma(Y),T)=\vert \Theta(\Sigma_{Y},T)\vert$.

\begin{lem}\label{l5.7}
There exists a constant $c_{1}>0$ such that for all $T$ big enough
$$
\{\gamma\in \Gamma\cap \Sigma(Y), H(\gamma)\leq T \}\ge T^{c_{1}}.
$$
As a consequence
\begin{equation}
N(\Sigma(Y),T)\ge T^{c_{1}}.
\end{equation}
\end{lem}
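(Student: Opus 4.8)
The plan is to combine the word-metric growth estimate of Proposition \ref{p.5.3} with the elementary height bounds \eqref{eq.5} and \eqref{eq.6}. Recall that we have fixed an embedding $\Gamma \hookrightarrow \GL_n(\ZZ)$, and that if $A$ is the maximum of the heights of the elements of the finite generating set $S_{\cF}$, then any $\gamma \in \Gamma$ with $l(\gamma) \leq N$ satisfies $H(\gamma) \leq (nA)^N$: this follows by induction on $N$ from \eqref{eq.5}, writing $\gamma$ as a product of at most $N$ generators.

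With this in hand, the argument is short. Given $T$ large, set $N = N(T) := \lfloor \log T / \log(nA) \rfloor$, so that every $\gamma \in \Gamma$ with $l(\gamma) \leq N$ has $H(\gamma) \leq (nA)^N \leq T$. Hence
$$
\{ \gamma \in \Gamma \cap \Sigma(Y) : H(\gamma) \leq T \} \supseteq \{ \gamma \in \Gamma \cap \Sigma(Y) : l(\gamma) \leq N \},
$$
so by Proposition \ref{p.5.3} the left-hand set has cardinality at least $N_{Y,\Gamma}(N) \geq e^{cN}$ for $N$ (equivalently $T$) large enough. Since $N \geq \log T / \log(nA) - 1$, we get $e^{cN} \geq e^{-c} T^{c/\log(nA)}$, which is $\geq T^{c_1}$ for, say, $c_1 = c/(2\log(nA))$ and all $T$ sufficiently large. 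This proves the first inequality.

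For the second inequality, observe that $\Gamma \subset G(\QQ)^+$ and $\Gamma \subset \GL_n(\ZZ)$, and that the height $H(\cdot)$ used for elements of $\GL_n(\ZZ)$ agrees with (or dominates) the height $H(\cdot)$ appearing in the definition of $\Theta(\Sigma_Y, T)$ when restricted to integer matrices. Therefore $\{\gamma \in \Gamma \cap \Sigma(Y) : H(\gamma) \leq T\} \subseteq \Theta(\Sigma_Y, T)$, whence $N(\Sigma(Y), T) \geq T^{c_1}$ for all $T$ big enough. There is no serious obstacle here; the only point requiring a little care is the passage from the word-length filtration to the height filtration, i.e.\ checking that the exponential growth in word length survives the (at worst exponential) relation $H(\gamma) \leq (nA)^{l(\gamma)}$ between the two — which it does precisely because both are exponential, so the ratio of exponents is a positive constant. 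One should also make sure the matching of the two notions of height is stated cleanly, but this is immediate since on $\GL_n(\ZZ)$ the coordinates are integers with denominator $1$.
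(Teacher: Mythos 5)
Your argument is correct and is essentially the paper's own proof: both use the bound $H(\gamma)\leq (nA)^{l(\gamma)}$ together with Proposition \ref{p.5.3} to transfer exponential growth in word length into polynomial growth in height, and then observe $\Gamma\subset G(\QQ)\cap\GL_n(\ZZ)$. You simply spell out the change of variables between $N$ and $T$ more explicitly than the paper, which leaves it as "an application of Proposition \ref{p.5.3}."
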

\begin{proof}
Let $N$ be an integer. The previous discussion shows that the set
$\{\gamma\in \Gamma\cap \Sigma(Y), l(\gamma)\leq N \}$
is contained in 
$\Theta(\Sigma_{Y},(An)^{N})$. The result is therefore
an application of proposition \ref{p.5.3}
\end{proof}

We can now give the proof of theorem \ref{t.5.5}. Let $c_1$ be the consant from lemma \ref{l5.7}.
Using the theorem of Pila and Wilkie \ref{PW} with $\epsilon = \frac{1}{2n}$, we know that 
for all $T$ big enough, 
$\Theta(\Sigma_{Y},T^{\frac{1}{2n}})$ is contained
in at most $T^{\frac{c_{1}}{4n}}$ semi-algebraic blocks.
Using lemma \ref{l5.7}, we see that there exists a semi-algebraic block
$W$ of $\Sigma(Y)$ containing at least $T^{\frac{c_{1}}{4n}}$
elements $\gamma\in \Sigma(Y)\cap \Gamma$ such that $H(\gamma)\leq  T^{\frac{1}{2n}}$.
Using lemma \ref{l.5.6}
we see that there exists $\sigma\in \Sigma(Y)$ such that
$\sigma \Theta_{Y}$ 
 contains at least $T^{\frac{c_{1}}{4n}}$
elements $\gamma\in \Sigma(Y)\cap \Gamma$ such that $H(\gamma)\leq  T^{\frac{1}{2n}}$.

Let $\gamma_{1}$ and $\gamma_{2}$ be two elements of $\sigma \Theta_{Y}\cap \Gamma$
such that $H(\gamma)\leq  T^{\frac{1}{2n}}$. Then using the equations (\ref{eq.5}) and (\ref{eq.6})
we see that 
$\gamma:=\gamma_{2}^{-1}\gamma_{1}$ is an element of $\Gamma\cap \Theta_{Y}$
such that $H(\gamma)\leq nc_{n}T^{1/2}$.  Therefore for all $T$ big enough
$\Theta_{Y}$ contains at least $T^{\frac{c_{1}}{4n}}$ elements  $\gamma\in \Gamma$ 
such that $H(\gamma)\leq T$. 
 As $Y$ is algebraic, the stabilizer $\Theta_{Y}$ is an algebraic
group and the group $H_{Y}$  generated by the Zariski closure of $\Gamma\cap \Theta_{Y}$
is a $\QQ$-algebraic subgroup of $G$  of positive dimension contained in $\Theta_{Y}$.
As $G_{\QQ}$ is $\QQ$-anisotropic, $G(\QQ)$ doesn't contain unipotent elements
and $H_{Y}$ is therefore a reductive  $\QQ$-group.

\section{End of the proof of the theorem \ref{main_thm} using Hecke correspondences.}

In this section we prove theorem \ref{main_thm}.

Let us recall the situation. We have a Hodge generic subvariety $V$ of $S$
and $Y$ a maximal algebraic subvariety of $\wt{V}= \pi^{-1}V$.
We assume that $Y$ is Hodge generic.

The group $G$ is semisimple of adjoint type:
$$
G = G_1 \times \cdots \times G_r
$$
and, accordingly,
$$
X^+ = X_1^+ \times \cdots \times X_r^+.
$$
Let $p_{i}:G\longrightarrow G_{i}$ be the projection on $G_{i}$.
Suppose that $V\not=S$ (otherwise there is nothing to prove).
By lemma \ref{reduction2} we assume that $V$ is not of the form
$$
S_1\times \cdots \times S_k \times V'
$$
where $V'$ is a subvariety of $S_{k+1}\times \cdots \cdots \times S_r$.
Recall that $H_Y$ is a reductive group.
After, if necessary, reodering the factors, we let $G_1, \dots,G_t$ 
be the factors to which $H_Y$ projects non-trivially.

\begin{lem}\label{1.6.1}
Let $C(V)$ be the Nori constant of $V$. There exists $p> C(V)$ and $g\in H_{Y}(\QQ)$ such that
for any $l\not= p$, $g_l\in K_l$ and  for $i = 1,\dots , t$, the element  $p_{i}(g_p)$ is not contained in compact
subgroups.
\end{lem}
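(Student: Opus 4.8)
The plan is to produce $g$ inside a maximal $\QQ$-torus of $H_Y$, combining a Chebotarev choice of $p$ with the Dirichlet unit theorem for tori. First I would fix a maximal $\QQ$-torus $T$ of the connected reductive positive-dimensional group $H_Y$; then $\dim T\ge 1$, and since $H_Y\subset G$ with $G$ being $\QQ$-anisotropic, $T$ is $\QQ$-anisotropic. For each $i\le t$ the image $p_i(T)$ is a maximal torus of the non-trivial reductive group $p_i(H_Y)$, so $p_i(T)\ne 1$, and it is a quotient of $T$. Letting $L$ be the splitting field of $T$, Chebotarev provides infinitely many primes that split completely in $L$; I would choose such a prime $p>C(V)$, so that $T$, and hence every $p_i(T)$, is split over $\QQ_p$.

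Next set $\Gamma_T:=T(\QQ)\cap\prod_{l\ne p}K_l\subset T(\AAA_f)$, the group of $\QQ$-points of $T$ lying in $K_l$ for all $l\ne p$. Since the integral structure coming from $K$ and the standard model of $T\subset\GL_n$ differ only at finitely many primes by commensurable compact open subgroups, $\Gamma_T$ is commensurable with the $\ZZ[1/p]$-points of $T$, so by the Dirichlet unit theorem for tori it is finitely generated abelian; moreover the homomorphism
$$
\Gamma_T\lto T(\QQ_p)/T(\QQ_p)^1\cong\ZZ^{\dim T}
$$
to the quotient by the maximal compact subgroup has kernel commensurable with $T(\ZZ)$ and image of rank $\dim T\ge 1$ — here $\QQ$-anisotropy of $T$ removes the global contribution and the splitting of $T$ over $\QQ_p$ supplies the full local one. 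Composing with $p_i$ and again passing to the quotient by maximal compacts gives homomorphisms $\mu_i\colon\Gamma_T\to\ZZ^{\dim p_i(T)}$ which, up to finite index, are induced by the surjection of cocharacter lattices $X_*(T)\twoheadrightarrow X_*(p_i(T))$; hence each $\mu_i$ has finite-index image and $\ker\mu_i$ has infinite index in $\Gamma_T$. Since a $\QQ$-vector space is not a finite union of proper subspaces, I can then pick $g\in\Gamma_T$ with $\mu_i(g)\ne 0$ for every $i\le t$.

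It remains to verify that this $g$ works. By construction $g\in T(\QQ)\subset H_Y(\QQ)$ and $g_l\in K_l$ for all $l\ne p$. Fix $i\le t$: since $\mu_i(g)\ne 0$ and $p_i(T)$ splits over $\QQ_p$, the element $p_i(g_p)$ has, in the fixed faithful representation $G\hookrightarrow\GL_n$, an eigenvalue of non-zero $p$-adic valuation. But any element contained in a compact subgroup of $\GL_n(\QQ_p)$ is conjugate into $\GL_n(\ZZ_p)$, hence has all eigenvalues $p$-adic units; therefore $p_i(g_p)$ lies in no compact subgroup of $G_i(\QQ_p)$. For $i>t$ we have $p_i(g)=1\in K_{p,i}$, so $g$ even satisfies the full hypothesis of Proposition \ref{characterisation}; and if a representative in $G(\QQ)^+$ is wanted one may replace $g$ by a suitable power, which affects none of the above.

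The only non-formal ingredients are the choice of $p$ and the rank computation for $\Gamma_T$, i.e., Chebotarev to split $T$ over $\QQ_p$ and the Dirichlet unit theorem for tori; the remaining points — that images of maximal tori are maximal tori, the eigenvalue criterion for non-compactness, and the covering argument for the $\mu_i$ — are routine. The step requiring the most care is purely bookkeeping: checking that the integral structure attached to $K$ is commensurable with the standard one, so that the rank formula is unaffected, and that the characters of $p_i(T)$ visible through the chosen $\GL_n$-embedding already detect non-compactness at $p$.
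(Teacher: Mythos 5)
Your argument is correct and follows essentially the same route as the paper: a maximal $\QQ$-torus $T$ of $H_Y$, a prime $p>C(V)$ over which $T$ splits, and finiteness of the class number of $T$ (which you phrase as the $S$-unit theorem for tori) to produce a rational element whose image in the cocharacter lattice avoids the kernels of the projections $X_*(p_i)$. You reorganize the local-to-rational descent as a rank computation for the arithmetic group $\Gamma_T$ rather than first choosing a local element $a_p$ and raising an adele to a power as the paper does, and you make explicit the eigenvalue criterion for non-compactness that the paper leaves implicit, but these differences are presentational rather than conceptual.
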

\begin{proof}  Let $T$ be a maximal $\QQ$-torus in $H_{Y}$.
 Pick a prime $p>C(V)$ such that $T_{\QQ_p}$ is a split torus.
 For $i=1,\dots, t$, the torus $T_{i}:=p_i(T)$ is a maximal in $G_i$. Moreover, each $T_{i\QQ_p}$ is split.
Let $X_*(T)$ be the cocharacter group of $T$. As $T_{\QQ_p}$ is split, we have $T(\QQ_p) = X_*(T)\otimes \QQ_p$
and the valuation map $v_p\colon \QQ^* \lto \ZZ$ induces an isomorphism
$$
\psi:\ T(\QQ_p)/K^m_{T,p} \cong X_*(T)
$$
where $K^m_{T,p}$ is the maximal compact open subgroup of $T(\QQ_p)$. 
Let $X_*(p_i)$ be the map $X_*(T)\lto X_*(T_i)$ induced by $p_i$.
The kernels of $X_*(p_i)$ give $t$ proper subroups of $X_*(T)$. We choose an element $a_p \in T(\QQ_p)$ such that the  $\psi(a_{p})K_{T,p}^m\in X_*(T)$
 avoids these subgroups.
Let $\alpha$ be the element of $T(\AAA_{f})$ such that $\alpha_{p}=a_p$ and for all $l\neq p$ $\alpha_{l}=1$.
For all $l\neq p$,  $\alpha_l\in K_l$ and   for $i = 1,\dots , t$
 the images of $\alpha_p$ in $T_i(\QQ_p)$ are not contained in compact subgroups.
 
 Let $K_{T}^{m}$ be the maximal compact open subgroup of
 $T(\AAA_{f})$.
 As the class group $T(\QQ)\backslash T(\AAA_{f})/K_{T}^{m}$ is finite
 there exist an integer $s$, such that 
 $\alpha^{s}= g k$ where $g\in T(\QQ)$ and $k\in K^m_T$.
For $l\not= p$
$g_l\in K_{T,l}^m$  and for   $i = 1,\dots , t$, the images of $g_p$ in $G_i(\QQ_p)$ are not contained in compact open subgroups.
 For $i>t$, $p_{i}(T)$ is trivial. This finishes the proof of lemma \ref{1.6.1}. 
 
\end{proof}

By properties of the Nori constant $C(V)$, the element $g$ of \ref{1.6.1} satisfies the assumptions of
propositions \ref{irred} and \ref{characterisation}.

As
$$
Y = g Y,
$$

$$
\pi(Y)\subset V \cap T_{g}V.
$$

By the choices we made and proposition \ref{irred} the varieties $T_gV$  and $T_{g^{-1}}V$ are irreducible.
Suopose that $V = T_gV$. Then $V=T_{g^{-1}}V$. 
Indeed, $V = T_{g}V$ if and only if $\gamma_1 g \gamma_2 \cdot \wt{V} = \wt{V}$ for all $\gamma_1,\gamma_2\in \Gamma$.
This is equivalent to  $\gamma_1 g^{-1} \gamma_2 \cdot \wt{V} = \wt{V}$ for all $\gamma_1,\gamma_2\in \Gamma$.
By proposition \ref{characterisation}, we conclude that
$$
V = S_1\times \cdots S_t \times V'
$$
where $V'$ is a subvariety of $S_{t+1}\times \cdots \times S_r$.
This finishes the proof in the case where $V = T_gV$.

Suppose now that the intersection $V$ and $T_{g}V$ is not proper. As $T_gV$ is irreducible, the intersection
$V\cap T_gV$ is not proper.
Let $Z = V\cap T_{g}V$. Then $Y\subset \pi^{-1}Z$.
As $Y$ and $\pi^{-1}Z$ are analytic varieties, there exists an analytic component 
containing $Y$. We let $V_1$ be the image of this component by $\pi$.
As $V_1$ contains $\pi(Y)$, it is Hodge generic.
Notice that the fact that the images of all projections of $Y$ are positive dimensional 
implies that the projections of $V_1$ are positive dimensional (in particular the monodromy is maximal).
We than choose a prime $p>C(V_1)$ and reiterate the process.
Notice that we may, by lemma \ref{reduction2} again assume that $V_1$ is not of the form $S_1\times \cdots \times S_k \times V'$. 
This way, we construct a decreasing sequence $V_i$ of subvarieties.  
A dimension argument shows that we end up with  $V_k= S_1\times \cdots \times S_t \times V'$ as above.
This gives a contradiction, hence $V_k=S$. This finishes the proof.


\begin{thebibliography}{99}


\bibitem{An} Y. Andr\'e
{\it Mumford-Tate groups of mixed Hodge structures and the theorem of the fixed part.}  Compositio Math.  {\bf 82 } (1992),  no. 1, 1--24.

\bibitem{Ax} J. Ax {\it On Schanuel's conjecture} Ann. of Maths. {\bf 93} (1971), 252--268.

\bibitem{BB} W.L. Baily, A. Borel {\it Compactification of arithmetic quotients of bounded symmetric domains}, 
Ann. of Math. {\bf 84} (1966), 442--528.

\bibitem{BH} M. Bridson, A. Haefliger {\it Metric spaces of non-positive curvature.} Springer, (1999)

\bibitem{Bo} A. Borel {\it Introduction aux groupes arithm\'etiques. }
Hermann, Paris (1969).

\bibitem{De0}  P. Deligne  {\it La conjecture de Weil pour les surfaces K3}, Invent. Math.
{\bf 15}  (1972), 206--226.


\bibitem{VDD} L. Van den Dries {\it Tame Topology and o-minimal structures.} LMS lecture note series, 248.
Cambridge University Press,  (1998).


\bibitem{VdM} L. Van den Dries, C. Miller {\it On the real exponential field with restricted analytic functions},
Israel J. Math. {\bf 85} (1994), 19--56.
 
 
 
\bibitem{EY} B. Edixhoven, A. Yafaev {\it Subvarieties of Shimura varieties.} Annals of Mathematics (2),  {\bf 157}, (2003), 621-645.

\bibitem{FL} E. Fortuna, S. Lojasiewicz {\it Sur l'alg\'ebricit\'e des ensembles analytiques complexes.} J. Reine Angew. Math.
{\bf 329} (1981), 215--220.


\bibitem{Gh} E. Ghys, P. de la Harpe {\it Sur les groupes hyperboliques d'apr\`es Mikhael Gromov}
Progress in Maths. {\bf 83}, Birkh\"auser (1990).


\bibitem{He} S. Helgason {\it  Differential Geometry, Lie Groups, And Symmetric spaces}
 Pure and Applied Mathematics, 80. Academic Press, Inc. [Harcourt Brace Jovanovich, Publishers], New York-London, 1978. xv+628 pp.

\bibitem{KY} B. Klingler, A. Yafaev {\it The Andr\'e-Oort conjecture.} Preprint

\bibitem{KP} S. Krantz, H. Parks {\it Distance to $C^{k}$ Hypersurfaces},
Journal of Differential Equations {\bf 40}, (1981), 116--120.

\bibitem{KP2} S.  Krantz, H. Parks
{\it A Primer of Real Analytic Functions}
Second Edition, Birkh\"auser Advanced Texts
Boston.

\bibitem{La} R. P. Langlads {\it The Dimension of the Spaces of Automorphic Forms}
Amer. J. Math. {\bf 85}, (1963), 99-125.

\bibitem{MiMi} J. Milne {\it Algebraic Groups, Lie Groups, and their Arithmetic Subgroups} Online lecture notes, available on author's web-page.


\bibitem{Mo} N. Mok  {\it Metric Rigidity Theorems on Hermitian Locally Symmetric Manifolds}
Series in Pure Math. {\bf 6}. World Scientific  (1989).

\bibitem{MoMo} B. Moonen {\it Linearity properties of Shimura varieties. I.} J. Algebraic Geom. 7 (1998), 539-567.

\bibitem{Mo2} B. Moonen 
{\it Models of Shimura varieties in mixed characteristics. Galois representations in arithmetic algebraic geometry} (Durham, 1996),  London Math. Soc. Lecture Note Ser., {\bf 254}, Cambridge Univ. Press, Cambridge, (1998).

\bibitem{PS} Y. Peterzil, S. Starchenko {\it Definability of restricted theta functions and families of abelian varieties.} 
to appear in Duke  Math. Journal.

\bibitem{PetStar} Y. Peterzil, S. Starchenko {\it Tame complex analysis and o-minimality.}
Proceedings of the ICM, Hyderabad, 2010. Vol II, 58--81, Hindustan Book Agency, New Delhi, (2010).

\bibitem{Pil} J. Pila
{\it O-minimality and the Andre-Oort conjecture for $\CC^n$}, Annals Math. {\bf 173} (2011), 1779-1840. 


\bibitem{PW} J. Pila, A. Wilkie {\it The rational points on a definable set.} Duke Math. Journal, 133(3), (2006) 591-616.

\bibitem{PT} J. Pila, J. Tsimerman {\it The Andr\'e-Oort conjecture for the moduli space of abelian surfaces.}
to appear in Compositio Math.

\bibitem{PT1} J. Pila, J. Tsimerman {\it Ax-Lindemann for $\cA_g$.}
Preprint, 2012, available at ArXiv:1206.2663.


\bibitem{PZ} J. Pila, U. Zannier 
{\it Rational points in periodic analytic sets and the Manin-Mumford conjecture}. Atti Accad. Naz. Lincei Cl. Sci. Fis. Mat. Natur. Rend. 
Lincei (9) Mat. Appl. {\bf 19} (2008), no. 2, 149-162.

\bibitem{Pi} R. Pink 
{\it Arithmetical compactification of mixed Shimura varieties}
Dissertation (1989), Bonner Mathematische Schriften 209.
available at authors homepage http://www.math.ethz.ch/~pink/publications.html

\bibitem{PR} V. Platonov, A. Rapinchuk {\it Algebraic groups and number theory.} Academic Press, (1994).

\bibitem{S} T. Scanlon {\it Counting special points: logic, diophantine geometry and transcendence theory.}
Bull. Amer. Math. Soc. (N.S.) {\bf 49} (2012), no. 1, 57--71.



\bibitem{Ul} E. Ullmo {\it Points rationnels des vari\'et\'es de Shimura}. 
 Int. Math. Res. Not., {\bf 76}, (2004), 4109--4125.
 
 \bibitem{Ullmo} E. Ullmo  {\it Quelques applications du th\'eor\`eme d'Ax-Lindemann hyperbolique}. Preprint 2012.

\bibitem{UY0} E. Ullmo, A. Yafaev {\it  Galois orbits and equidistribution of special subvarieties: towards the Andr\'e-Oort conjecture.} Preprint. 
Available on author's web-page.


\bibitem{UY} E. Ullmo, A. Yafaev {\it A characterisation of special subvarieties.}  Mathematika {\bf 57} (2011), no. 2,
263--273.


\bibitem{Wo} J. A. Wolf {\it Fine Structure of Hermitian Symmetric Spaces.} in
Symmetric Spaces. Pure and Applied Math {\bf 8}. Edited by G. Boothby and L. Weiss.
Marcel Dekker, N.Y. (1972) 271-358.

\bibitem{Xu} Y. Xu {\it Some results on the homogeneous Siegel domains in $\CC^{n}$}
Compt. Math {\bf 142} (1993), 102--120.

\end{thebibliography}
\end{document}